\documentclass[11pt]{amsart}
\usepackage[utf8]{inputenc}
\usepackage{ragged2e}
\usepackage{mathtools}
\usepackage{mathrsfs}
\usepackage{float}
\usepackage{tikz}
\usepackage{amsfonts} 
\usepackage{amsmath}
\usepackage{enumerate}
\usepackage{amsthm}
\usepackage{amssymb}
\usepackage{dutchcal}
\usepackage{wasysym}
\usepackage[foot]{amsaddr}
\usepackage{bbm}
\usepackage[left=2.5cm,right=2.5cm,top=3.2cm,bottom=3cm]{geometry}
\usepackage{cleveref}
\usepackage{caption}
\crefformat{section}{\S#2#1#3}
\crefformat{subsection}{\S#2#1#3}

\usepackage[OT2,T1]{fontenc}
\DeclareSymbolFont{cyrletters}{OT2}{wncyr}{m}{n}
\DeclareMathSymbol{\Sha}{\mathalpha}{cyrletters}{"58}

\usepackage{pdfpages}
\usepackage[natbib, maxbibnames=6, maxcitenames=6]{biblatex}
\makeatletter
\renewcommand{\fnum@figure}{Figure \thefigure}

\newtheorem{teo}{Theorem}
\newtheorem{lem}{Lemma}[section]
\newtheorem{prop}[lem]{Proposition}
\newtheorem{thm}[lem]{Theorem}
\newtheorem{cor}{Corollary}[lem]
\newtheorem{defi}[lem]{Definition}
\newtheorem{es}[lem]{Example}
\newtheorem{con}{Conjecture}

\theoremstyle{remark}

\newtheorem{reml}[cor]{Remark}

\newtheorem{q}{Question}

\def\C{\mathbb{C}}
\def\c{\mathcal{C}}
\def\e{\mathcal{E}}
\def\R{\mathbb{R}}
\def\jac{\mathrm{Jac}}
\def\Q{\mathbb{Q}}
\def\Z{\mathbb{Z}}

\def\A{\mathbb{A}}
\def\P{\mathbb{P}}
\def\O{\mathcal{O}}

\def\F{\mathbb{F}}

\def\h{\mathbb{H}}
\def\H{\mathcal{H}}

\def\m{\mathscr{M}}

\def\PGL{\text{PGL}}

\def\Gal{\text{Gal}}

\def\Aut{\text{Aut}}

\title{The arithmetic of critical values II: critical elliptic curves}
\author{Francesco Naccarato}
\address{Department of Mathematics, ETH Zürich, Zürich CH-8092, Switzerland.}
\email{francesco.naccarato@math.ethz.ch}
\subjclass{Primary: 11G05, 14H40. Secondary: 11G07, 14G05, 14H30.}
\keywords{Critical values, quartic polynomials, branch points, elliptic curves, Galois closures, decomposable Jacobians, Selmer companions, Tate-Shafarevich groups}
\begin{document}

\begin{abstract}
    In this second chapter of the \textit{Arithmetic of critical values} series (ACV), we study certain double covers $E_f\to\P^1$ whose branch locus coincides with that of a quartic polynomial $f$. We give a direct proof of the fact, already shown non-constructively in ACV I, that the elliptic curves $E_f$ admit a $3$-isogeny. Our methods are Galois-theoretic, and lead us to a thorough analysis of the Galois closure of $f:\P^1\to\P^1$. We exploit its rich geometry to prove a Selmer companionship theorem for the family $E_f$, allowing us to exhibit elements in certain Tate-Shafarevich groups which are visible in an abelian surface. We also give some dynamical and Diophantine applications of our constructions, as well as new examples of Jacobians isogenous to a power of an elliptic curve.
\end{abstract}

\maketitle
\addtocontents{toc}{\protect\setlength{\parskip}{0pt}}
\tableofcontents

\section{Introduction}\label{1}

We continue the study of the arithmetic of critical values started in ACV I \cite{nac}, this time focusing solely on the case of quartic polynomials. In our investigation, we encounter various families of elliptic curves which encode ramification data about our quartics, and we devote a large portion of the paper to their arithmetic geometry. While motivated by our previous work, the results in this paper are largely independent of it, making it suitable for the reader unfamiliar with ACV I. 

We now briefly recall the general framework for our study of critical values. For the moment, we work with polynomials (and, more generally, morphisms of curves) defined over a field $K$ of characteristic zero, which later on we will take as a number field; we also let $k=\overline K$. In our previous work, we defined the moduli space $H_d\to\text{Conf}_d(\P^1_k)$ of degree $d$ polynomials with given critical values, mainly by appealing to the theory of Hurwitz spaces. In general, the latter classify ($G$-)covers $Y\to X$ of a fixed curve $X$ (up to ($G$-)cover isomorphism) having given branch locus and local monodromy data. So, Hurwitz spaces relative to $X$ are naturally covers of the configuration space $\text{Conf}_d(X)$.

We also defined the \textit{reduced} Hurwitz space $\H_d\to\text{Conf}_d(\P^1_k)/\PGL_2(k)$ of degree $d$ polynomials. Reduced Hurwitz spaces classify covers as above up to the natural action of $\Aut(X)$---which naturally carries over to the branch locus. As it turns out, \textit{Morse} polynomials (see \cite[\S4.4]{serTopGal} or \S\ref{2} for the definition) of degree $d$ are the only covers $f:\P^1_k\to\P^1_k$ simply ramified at $d-1$ points, fully at one point and with monodromy group $S_d$, once we require the $d$-cycle to be at $f^{-1}(\infty)=\{\infty\}$. Therefore, after the $\PGL_2(k)$-quotient, we get (see \cite[\S 3.2]{nac}) a moduli space whose $k$-points are in bijection with linear equivalence classes of polynomials having given $\PGL_2(k)$-class of critical values, where we define:
\begin{defi}
    Two polynomials $f,g\in k[X]$ are \textbf{linearly equivalent} if there exist affine transformations $\alpha,\gamma\in\mathrm{Aff}(k)$ such that $f\circ\alpha=\gamma\circ g$.
\end{defi}
$\H_d$ turns out to be an affine algebraic variety of dimension $d-3$ (see \cite[\S1-2]{cad} for more details) and, under the aforementioned bijection, any $P\in\H_d(K)$ has a polynomial representative defined over an extension of $K$ of degree bounded by a function of $d$; this extension can be taken as $K$ itself when $d=4$ \cite[Theorem 4]{nac}. Coming from a cover of $\text{Conf}_4(\P^1_k)$, $\H_4$ is a modular curve. In ACV I \cite[Lemma 4.1]{nac}, we showed that \begin{align}
    \H_4\simeq_{\Q} Y_0(3).\label{h4}
\end{align}This has among its consequences the existence of infinitely many pairs of linearly inequivalent polynomials over $\Q$ with the same critical values \cite[Theorem 1]{nac}, which was the main application for the first chapter of the ACV series.

\subsection{Structure of the paper and main results}

In \S\ref{2} we recall some basic facts about critical values, along with more details on the correspondence between linear classes of quartic polynomials and elliptic curves with a $3$-isogeny. We then answer some natural questions left open after ACV I. Our previous proof of \eqref{h4} relied on the Cummins-Pauli classification for genus-$0$ subgroups of $\Gamma(1)$, but one would naturally seek a direct proof of this isomorphism. In \S\ref{3} we obtain such a proof by studying the geometry of the Galois closure of $f:\P^1_k\to\P^1_k$, which is a curve of genus $4$. In particular, we prove (cf. Proposition \ref{clos} and Theorem \ref{t1}):

\begin{teo}\label{t1'}
    Let $K$ be a number field and let $f\in K[X]$ be a quartic polynomial whose critical points are distinct and do not form an arithmetic progression. The Galois closure of the cover $f:\P^1\to\P^1$ factors through a $3$-isogeny of elliptic curves over $K$.
\end{teo}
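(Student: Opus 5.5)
The plan is to make the Galois closure explicit through its monodromy group, and to find the $3$-isogeny as the map between two intermediate quotients. These quotients correspond to the normal series $V_4\lhd A_4\lhd S_4$.

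\emph{Step 1: the monodromy is $S_4$.} The hypotheses should force $f$ to be Morse. The critical points are distinct, so every finite ramification point is simple. If two critical values coincided, a short computation with $f'(X)=4a(X-c_1)(X-c_2)(X-c_3)$ shows that the critical points would have to be symmetric about the middle one: then $f(c_2+x)-f(c_2)$ is even in $x$, and $c_1,c_2,c_3$ form an arithmetic progression. So the cover $f:\P^1\to\P^1$ is branched at three distinct finite critical values $v_1,v_2,v_3$, with transpositions as inertia, and at $\infty$, with a $4$-cycle. The monodromy group is transitive and generated by transpositions, hence is $S_4$. Riemann--Hurwitz gives the genus of the geometric Galois closure $X$:
\begin{align*}
2g(X)-2=24\cdot(-2)+3\cdot 12+24\cdot\tfrac34=6,
\end{align*}
so $g(X)=4$.

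\emph{Step 2: two genus-one quotients.} Set $E_f:=X/A_4$. It is the double cover of $\P^1$ given by the sign character. All four inertia generators are odd permutations, so $E_f$ is branched exactly at $\{v_1,v_2,v_3,\infty\}$ and has genus $1$. Next set $E':=X/V_4$, which is an $S_3$-cover of $\P^1$. All four inertia generators map to transpositions of $S_3$, so Riemann--Hurwitz gives $2g(E')-2=6(-2)+4\cdot 3=0$, hence $g(E')=1$. The inertia groups of $E'\to\P^1$ have order $2$ and do not meet $A_3=A_4/V_4$. Therefore the cyclic degree-$3$ map $E'\to E_f$ is unramified. An unramified degree-$3$ map between genus-one curves becomes a $3$-isogeny once compatible base points are fixed.

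\emph{Step 3: descent to $K$.} This is the main obstacle. The geometric Galois closure $X$ may only be defined over a constant extension of $K$, so I would avoid it for the arithmetic. Instead I would realise the quotients directly over $K$. $E_f$ is the curve $y^2=c\,\mathrm{disc}_X(f(X)-t)$, which is a cubic in $t$ up to a constant and so is defined over $K$. $E'$ is the function field of the resolvent cubic of $f(X)-t$ over $K(t)$, and $E'\to E_f$ is the natural map from the resolvent cubic field to the quadratic discriminant field. Both are defined over $K$, since they come from Galois-stable subfields of the splitting field over $K(t)$. (A constant-field twist could only enlarge the quadratic field by constants. Using $c\,\mathrm{disc}$ with the right $c\in K^\times$ keeps it geometric, and this should be checked.) For the base points: $E_f$ has a single point over $\infty$, since $\infty$ is a branch point, so that point is $K$-rational. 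In $E'$, the fibre over $\infty$ has two points. One has ramification index $2$, corresponding to the $2$-element orbit of the transposition on $S_3/\langle\tau\rangle$; the other has index $1$. Being the unique point of its type, the ramified one is $K$-rational, and it maps to the point at infinity of $E_f$. Taking these as origins makes $E'\to E_f$ a $3$-isogeny over $K$. The Galois closure $X\to\P^1$ then factors as $X\to E'\to E_f\to\P^1$, as claimed. The remaining work is the careful check that the degree-$3$ subextension really is the function field of a curve over $K$, with no constant-field extension. I would settle this by noting that the resolvent cubic has a $K$-rational ramified point above $\infty$, which forces $K$ to be algebraically closed in that function field.
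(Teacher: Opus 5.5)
Your Steps 1 and 2 are correct and follow the same route as the paper's Proposition \ref{clos}(ii). Both use the tower $X\to X/V_4\to X/A_4=E_f$. Your direct computation of inertia in $S_4/V_4\cong S_3$, at all four branch points including $\infty$, replaces the paper's argument that the $C_3$-cover can only branch over $E_f[2]$ and not over its nonzero points.

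The gap is in Step 3. The paper settles it in one sentence and you rightly flag it, but your argument there is about the wrong curve. Your $E'$ is $X/V_4$ (the paper's $\widetilde E_f$), and $X/V_4\to\P^1$ is geometrically Galois with group $S_3$. So all points above $\infty$ have the same index, namely $2$, the order of the image of $\sigma_\infty$. The fibre consists of \emph{three} points, each with $e=2$, indexed by $S_4/V_4\langle\sigma_\infty\rangle$. The fibre you describe is a different one: two points of indices $2$ and $1$, from the orbits of a transposition on $S_3/\langle\tau\rangle$. That is the fibre of the degree-$3$, genus-$0$ resolvent cubic curve $X/D_4$.

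The same conflation appears in ``$E'$ is the function field of the resolvent cubic''. The fixed field of $V_4$ is the \emph{splitting} field of the resolvent cubic. The field of a single root is cubic over $K(t)$, so it cannot contain the quadratic discriminant field. Hence your ``unique point of its type'' argument gives no $K$-rational point of $X/V_4$ over $0_{E_f}$. Without one, you only have a genus-one curve with an étale degree-$3$ map to $E_f$, not a $3$-isogeny of elliptic curves over $K$. Your closing regularity argument is likewise applied to $X/D_4$ rather than $X/V_4$.

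Both points are easy to repair.

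\emph{Regularity.} This needs no rational point. The arithmetic monodromy group of $f(X)-t$ over $K(t)$ lies in $S_4$ and contains the geometric one, which is $S_4$ by Step 1. So the splitting field has constant field $K$, and every intermediate field is the function field of a geometrically irreducible curve over $K$. In particular, the fixed field of $A_4$ is exactly $K(t,\sqrt{\mathrm{disc}_X(f(X)-t)})$, and there is no constant $c$ to adjust.

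\emph{Origin.} Apply your uniqueness argument twice.
\begin{enumerate}[(i)]
\item The point $Q$ of $X/D_4$ above $\infty$ with $e=1$ is unique, hence $K$-rational.
\item Every point of $X/V_4$ above $\infty$ has index $2$, so the double cover $X/V_4\to X/D_4$ ramifies at $Q$.
\item Therefore the unique point of $X/V_4$ above $Q$ is $K$-rational, and it maps to $0_{E_f}$.
\end{enumerate}
Equivalently, the decomposition group at a place above $\infty$ normalizes the inertia group $\langle\sigma_\infty\rangle\cong C_4$. It therefore lies in $D_4=V_4\langle\sigma_\infty\rangle$, so all three points of $X/V_4$ above $\infty$ have residue degree $1$. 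With that origin, your argument proves the statement.
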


In \S\ref{4} we investigate the global arithmetic of various families of curves that arise in the Galois diagram of Theorem \ref{t1'}---including that of \textit{critical elliptic curves} $E_f$, another elliptic family $E'_f$ admitting a $4$-congruence $E'_f[4]\simeq E_f[4]$, and a genus-$2$ family whose Jacobian is a glueing of $E_f$ and $E'_f$ along the $4$-congruence. We show that these families are fibrations over a suitably punctured projective line, and that the \textit{Prym} family $E'_f$ has a nontorsion section defined over $\Q(i)$. The latter will be central in the many of the applications of the later sections.

\S\ref{5} is devoted to the study of the local arithmetic of the critical families. We show that, for an infinite family of quartics $f\in\Q[X]$, the $4$-congruence found in \S\ref{4} induces a \textit{Selmer companionship}, a notion developed by Mazur and Rubin \cite{MR}. In particular, we construct the first infinite family of non-isogenous $2$-Selmer companions (cf$.$ Theorem \ref{companions}):
\begin{teo}\label{infcomp}
     There exist infinitely many pairs $(E_1,E_2)$ of non-isogenous rational elliptic curves which are $2$-Selmer companions over every number field, and such that no two pairs are related by quadratic twisting.
\end{teo}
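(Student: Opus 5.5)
The plan is to use the Mazur--Rubin criterion for Selmer companionship. Two elliptic curves $E_1,E_2$ over $\Q$ with a Galois-equivariant isomorphism $\phi: E_1[4]\simeq E_2[4]$ are $2$-Selmer companions over every number field $L$ if, at every place of every such $L$, $\phi$ restricted to $E_1[2]\to E_2[2]$ carries the local Kummer image of $E_1(L_v)/2$ onto that of $E_2(L_v)/2$. The isomorphism $E_1[4]\simeq E_2[4]$ forces the $2$-torsion identification to respect the local Tate pairings. Mazur and Rubin reduce the local condition to explicit hypotheses: roughly, the congruence should be symplectic, and at the bad primes and at $2$ the reduction types and Tamagawa numbers should be compatible. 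These conditions are preserved under base change. I will take $E_1=E_f$ and $E_2=E'_f$ with the $4$-congruence $E'_f[4]\simeq E_f[4]$ from \S\ref{4}, which arises from the genus-$2$ curve whose Jacobian glues $E_f$ and $E'_f$ along their $4$-torsion. Since it comes from such a glueing, the congruence is anti-isometric for the Weil pairing. The first step is to check that this is the sign Mazur--Rubin require, or else to compose with a suitable automorphism.

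Next I parametrise. By \S\ref{2} and the isomorphism \eqref{h4}, quartic classes over $\Q$ correspond to a rational parameter $t$ on $Y_0(3)$, so $E_f=E_t$ and $E'_t$ form one-parameter families over a punctured $\P^1$. Using the explicit Weierstrass models from \S\ref{4}, I will compute the discriminants of $E_t$ and $E'_t$ as functions of $t$. I then restrict $t$ to an infinite subfamily, for instance integers $t$ lying in fixed congruence classes modulo a power of $2$ and $3$. On this subfamily I expect to arrange three things. Away from $2$, every bad prime should be one of multiplicative reduction for both curves, with matching types. Tate's algorithm should then show the Kummer images agree: both are the unramified subgroup, or both are determined by the common $4$-torsion. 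At $2$, and at $\infty$, the local conditions should be forced by fixing $t$ $2$-adically, since the local images then vary continuously and can be checked at one representative.

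Non-isogeny of $E_t$ and $E'_t$ will follow by exhibiting primes $p$ with $a_p(E_t)\ne a_p(E'_t)$, or from differing $j$-invariant behaviour. For example, $E_t$ has a rational $3$-isogeny by Theorem \ref{t1'}, while $E'_t$ generically does not, and $3$-isogenies are an isogeny invariant over $\Q$ up to the isogeny class structure. To show that no two pairs are quadratic twists, I use $j$-invariants: twisting preserves $j$, and $j(E_t)$ is a nonconstant rational function of $t$. So only finitely many $t$ share a given pair $(j(E_t),j(E'_t))$, and I pass to an infinite subset.

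The main obstacle is the local verification at $2$, and at primes of additive reduction if these cannot be avoided. There the Kummer images are not determined by the Galois module alone, and the Mazur--Rubin criterion requires a genuine computation. I would first try to choose the congruence classes of $t$ so that both curves have good or split multiplicative reduction at $2$. If that is impossible, I would use the explicit $2$-descent maps on the common $2$-torsion and compare the images directly.
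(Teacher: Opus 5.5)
Your architecture matches the paper's: the Mazur--Rubin criterion applied to the $4$-congruence $E_f[4]\simeq E'_f[4]$ of Lemma \ref{4-iso}, a congruence-class subfamily, and $j$-invariants to separate the pairs. However, the local hypotheses you plan to verify are not the ones the criterion needs, and your plan at $2$ would fail. With $p=2$, $k=1$, Mazur--Rubin's Theorem 3.1 asks for four things:
\begin{enumerate}[(i)]
\item a $G_{\Q}$-isomorphism $E_1[4]\simeq E_2[4]$, with no condition on the Weil-pairing sign (in general no Galois-equivariant automorphism could flip it anyway);
\item the same set $S$ of primes of \emph{potentially multiplicative} reduction for both curves;
\item $2\in S$;
\item identification of the canonical subgroups of order $4$ (those corresponding to $\mu_4$ under Tate uniformization) at every prime of $S$.
\end{enumerate}
These are the conditions that survive arbitrary base change and quadratic twisting. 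Tamagawa numbers and good/multiplicative/additive reduction types do not. So good reduction at $2$ is not an option. A $2$-descent over $\Q_2$ at one representative certifies nothing over finite extensions of $\Q_2$, nor for twists by characters ramified at $2$. For $f=X^4+2AX^2-8X$, requiring $v_2(j)<0$ for both curves is exactly $A\equiv-3\pmod{32}$. Agreement of the remaining potentially multiplicative primes comes from Lemma \ref{badred} for $\ell>3$ and a separate valuation check at $3$.

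The step you are genuinely missing is the canonical-subgroup identification, Lemma \ref{preserve}. Your claim that at multiplicative primes both Kummer images are the unramified subgroup is false in general. For split multiplicative reduction at odd $\ell$ with Tate parameter of even valuation, the image contains a ramified class; the claim also ignores twists. What forces the images to match is precisely the canonical subgroup. Checking this needs the $4$-congruence explicitly, as the lift of $(X_i^2,0)\mapsto(-f(X_i),0)$ given by $\eta_{ij}=\pm(X_i-X_j)\delta_{ij}$. With $B=-2$, \eqref{discs} shows that the same pair of roots coalesces on $\e_f$, $E_f$ and $E'_f$, so the non-nodal $2$-torsion points correspond. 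A computation at the Tate cusp then shows this lift carries the canonical half of the non-nodal point to the canonical half.

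Your separation arguments are fine. Injectivity of $A\mapsto(j(E_f),j(E'_f))$ is exactly the paper's argument. Your $3$-isogeny route to non-isogeny also works: reducibility of the mod-$3$ representation is an isogeny invariant over $\Q$, and $E'_f$ has a rational $3$-isogeny only for a thin set of parameters, which Hilbert irreducibility lets you avoid inside your progression. The paper instead combines non-isogeny over $\Q(j)$ with the Mazur--Kenku bound on cyclic isogeny degrees over $\Q$.
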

For infinitely many of these pairs, we are able to extend the $2$-Selmer companionship to an isomorphism of $4$-Selmer groups (Theorem \ref{4sel}). Together with the nontorsion section on $E'_f$, this allows us to exhibit many rational elliptic curves with an element of order $4$ in their Tate-Shafarevich group, and to prove visibility in an abelian surface \cite{CM} for such elements (Proposition \ref{visibility}). Infinitude for this set of curves is conditional on the infinitude of even root number curves in a certain family.

In \S\ref{6} we highlight a mysterious connection between the Selmer companions of \S\ref{5} and the \textit{equicritical quartics} of \cite{nac}, before turning to global Diophantine applications. We settle, over a fixed quadratic extension, the infinitude question for rational solutions to the same-polynomial variables separated equation (cf$.$ Theorem \ref{t2}(ii)):
\begin{teo}\label{varsepbad}
    Let $f$ be a quartic with rational coefficients. The equation \begin{align*}
            f(X)=f(Y), \ X\neq Y
        \end{align*}has infinitely many solutions over $\Q(i)$, unless the $j$-invariant of the critical points of $f$ is $0,-27648/11$ or $55296/5$.
\end{teo}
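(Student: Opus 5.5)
The plan is to study the curve $C_f=\{f(X)=f(Y)\}\subset\P^1\times\P^1$ after removing the diagonal component. For a quartic, $(f(X)-f(Y))/(X-Y)$ is a cubic curve $D_f$ in $\P^2$ with three points at infinity, namely the directions $(1:\zeta:0)$ with $\zeta^4=1$ and $\zeta\neq1$. These points are defined over $\Q(i)$.

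\emph{Step 1: reduce to normal form.} Up to linear equivalence over $\Q$, we may move the critical points so that $f$ has a convenient shape, for instance $f=X^4+aX^2+bX$; this does not affect the count of solutions. We then check that $D_f$ is smooth exactly when the critical points of $f$ are distinct. In the degenerate cases, $D_f$ is singular (rational or reducible), and the statement either holds trivially or falls among the excluded $j$-invariants; I would verify this directly.

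\emph{Step 2: identify $D_f$.} When $D_f$ is smooth, it is a genus-one curve with a rational point, which is automatically true over $\Q(i)$ since the points at infinity are there. I expect it to be isomorphic over $\Q(i)$ to one of the elliptic curves in the Galois diagram of Theorem~\ref{t1'}. Indeed, $D_f$ is the quotient of the genus-$4$ Galois closure by a subgroup of $S_4$ fixing an ordered pair of roots, that is, a subgroup of order $2$ generated by a transposition. This quotient is one of the curves studied in \S\ref{4}, presumably the Prym family $E'_f$. Concretely, I would compute the $j$-invariant of $D_f$ as a function of the $j$-invariant of the critical points and match it with $E'_f$.

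\emph{Step 3: produce a point of infinite order.} The solutions with $X\neq Y$ are the affine points of $D_f$ minus finitely many. So infinitely many solutions over $\Q(i)$ is equivalent to $D_f(\Q(i))$ being infinite. Take $O=(1:i:0)$ as origin. The natural candidate for a nontorsion point is the difference of two points at infinity, for example $(1:-1:0)-(1:i:0)$. This should be the nontorsion section over $\Q(i)$ of $E'_f$ from \S\ref{4}, specialized to the fibre over our $f$.

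\emph{Step 4: handle bad specializations (main obstacle).} A section that is nontorsion generically can become torsion on individual fibres. By Mazur's theorem and its quadratic analogue (Kamienny–Kenku–Momose), torsion orders over $\Q(i)$ are bounded. So I would compute the division polynomials $\psi_n$ of the section, for the finitely many admissible $n$, and find the parameter values $t\in\Q$ (the rational fibres) at which the point becomes torsion. This is the hard part. The resulting conditions cut out modular-type curves of positive genus or with few rational points, and one must show that their rational points correspond precisely to $j$-invariants $0$, $-27648/11$ and $55296/5$, together with the degenerate fibres. I would first try to factor $n\cdot P=O$ symbolically in the parameter for each $n\le 12$. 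I would then determine the rational points of each resulting curve, using rank-zero elliptic curves or Chabauty-type arguments where needed.

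Outside these exceptions, $D_f(\Q(i))$ has positive rank, which gives infinitely many solutions.
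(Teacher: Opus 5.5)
Your route is the same as the paper's. $D_f$ is the Prym curve $E'_f$ of Lemma~\ref{prymdata}: via $v=X+Y$ and $x=-2B/v$, the point $(1:-1:0)$ goes to the origin and $(1:\pm i:0)$ go to $(0,\mp iB)$. So your difference of points at infinity is, up to sign, the section $P_j$ of Proposition~\ref{torsj}(i), and the exceptions are found by evaluating division polynomials at it. There is, however, one false claim. Your Step~1 criterion is wrong: $D_f$ is smooth exactly when $f$ is good, not whenever its critical points are distinct. For $j(Z_f)=1728$, e.g.\ $f=X^4+2AX^2$ with $A\neq 0$, the critical points $0,\pm\sqrt{-A}$ are distinct, but two critical values coincide and $D_f$ splits as $(X+Y)(X^2+Y^2+2A)=0$. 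Since $1728$ is not an excluded value, you must treat this case separately. It is trivial, because the line $X=-Y$ gives infinitely many rational solutions. Together with $f\sim X^4$ and $f\sim X^4+X^3$ (the case $j=\infty$), it makes up the bad-quartic case that the paper checks by hand.

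You also overestimate Step~4. Since $(1:-1:0)$ is $\Q$-rational and complex conjugation swaps $(1:i:0)$ and $(1:-i:0)$, conjugation acts on your point by $-1$. Hence it descends to the $\Q$-rational point $Q_j=(0,u)$ on the $(-1)$-twist $F_j$, and Mazur's theorem over $\Q$ suffices; you do not need Kamienny--Kenku--Momose. The family is one-parameter in $u=A^3/B^2$, so each condition $nQ_j=0$ is a single polynomial in $u$ and you only need its rational roots. These are $u=-4$ and $u=-8$ (orders $5$ and $6$, i.e.\ $j=-27648/11$ and $55296/5$), and $j=0$ is where the section is $3$-torsion. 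No positive-genus curves or Chabauty arguments arise.
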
We also construct three rational genus-$4$ curves whose Jacobian splits over $\Q$ as the fourth power of an elliptic curve; two are novel, while the other recovers Bring's curve, the only previously known such example:
\begin{teo}\label{fourthsplit}
    For $f$ equal to each of the polynomials $$X^4+30X^2+64X, \quad X^4+90X^2+72X, \quad X^4+10X^2+40X,$$ the Jacobian of the Galois closure of $f:\P^1\to\P^1$ is isogenous over $\Q$ to the fourth power of an elliptic curve. The latter can be taken as having Weierstrass equation $$y^2=x^3-75x-506, \quad y^2=x^3+45x-18, \quad y^2=x^3-675x-79650,$$ respectively.
\end{teo}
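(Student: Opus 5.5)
We will prove the statement by decomposing the Jacobian of the genus-$4$ Galois closure $\tilde C_f$ of $f:\P^1\to\P^1$ according to the action of its monodromy group $S_4$, and then computing explicitly in the three cases.

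\textbf{Decomposition.} Since $f$ is Morse, the monodromy group is $S_4$. The rational representation of $S_4$ on $H^1(\tilde C_f)$, which has dimension $8$, can be read off from Chevalley--Weil or equivalently from the quotient genera. The quotients $\tilde C_f/H$ for $H=S_4, S_3, D_4$ are rational. The quotients by $A_4$, by the cyclic groups $C_3$, $C_4$, and by the Klein group $V_4$ are the curves in the Galois diagram of Theorem \ref{t1'}. Running the standard idempotent relations (Kani--Rosen) for $S_4$ gives
\begin{align*}
J(\tilde C_f)\sim A^2\times B,
\end{align*}
where $A$ is a $2$-dimensional piece. The natural candidates are $E_f\times E'_f$, or the Jacobian of the genus-$2$ curve from \S\ref{4} glued along $E_f[4]\simeq E'_f[4]$, or a Prym attached to the standard representation of $S_4$. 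One could also get a decomposition of the form $E_f^2\times E_f'^2$ up to isogeny, coming from the two $3$-dimensional irreducible representations of $S_4$, each of which has multiplicity $2$ by dimension count. We will take as given from \S\ref{3}--\ref{4} the identification $J(\tilde C_f)\sim E_f^2\times (E'_f)^2$ over $\Q$, possibly with one factor replaced by its $3$-isogenous curve. Under this identification, the claim reduces to showing that $E_f$ and $E'_f$ are $\Q$-isogenous for these three $f$.

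\textbf{Computation for each $f$.} For each of the three polynomials we will:
\begin{enumerate}[(i)]
\item compute the critical values and the resulting Weierstrass models of $E_f$ and $E'_f$, using the explicit formulas of \S\ref{2}--\ref{4};
\item compute their $j$-invariants and conductors;
\item exhibit a rational isogeny between them, of degree $1$, $2$, $3$, $4$, or a product of these, for example by matching LMFDB isogeny classes or by comparing $a_p$ for small primes and invoking Faltings/Serre--Tate to conclude;
\item confirm the stated Weierstrass equation lies in the same isogeny class.
\end{enumerate}
For the third polynomial, the output should recover Bring's curve, whose Jacobian is known to be $\sim E^4$ with $E$ of conductor $50$; this serves as a consistency check.

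\textbf{Main obstacle.} The hard step is making the decomposition rigorous over $\Q$ rather than $\bar\Q$. The $3$-dimensional representations of $S_4$ are rational, so the isotypic pieces are defined over $\Q$. However, to identify each piece with the square of a specific elliptic curve, we need the quotients by suitable subgroups ($C_4$, and $D_4$-twisted Pryms) to be $\Q$-rational with the expected genus. I would first try writing $J(\tilde C_f)$ via the quotients by $\langle(12)\rangle$-type subgroups: each has genus $1$ (or $2$) and is defined over $K$. I would then use the group-algebra relation $\sum_{\tau} e_{\langle\tau\rangle}$ to express the Jacobian as a sum of four copies of quotient Jacobians, which become mutually isogenous exactly under the special condition verified numerically in step (iii). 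If that relation fails to be rational, the fallback is a direct count: compare $\#\tilde C_f(\F_p)$ with $p+1-4a_p(E)$ for many $p$. Together with a Tate-conjecture (Faltings) argument, this determines the isogeny class of $J(\tilde C_f)$ up to a Sturm-type bound.
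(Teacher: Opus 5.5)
Your overall route matches the paper's: use the $S_4$-decomposition of $\jac(X_f)$ to reduce to $E_f\sim E'_f$ over $\Q$, then settle the three cases with explicit models and isogeny classes (the paper finds the pairs 126a1/126a3, 162c2/162c4, 50a1/50a4). There is, however, a real error in the decomposition step. It is repairable and does not change the reduction.

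\textbf{The decomposition.} The identification $\jac(X_f)\sim E_f^2\times(E'_f)^2$ that you ``take as given'' is not what the paper proves, and it is false. Your dimension count already fails: two $3$-dimensional representations of multiplicity $2$ contribute $12>8=\dim H^1(X_f)$. The correct count goes as follows.
\begin{enumerate}[(i)]
\item By Lefschetz, $\operatorname{tr}(g\mid H^1)=2-\#\mathrm{Fix}(g)$ for $g\neq e$.
\item Inertia is generated by transpositions over the three finite critical values and by a $4$-cycle over $\infty$. So the character takes the values $8,-4,0,2,0$ on $e$, transpositions, double transpositions, $3$-cycles, $4$-cycles.
\item Hence $H^1(X_f,\C)\cong\mathrm{sgn}^{\oplus 2}\oplus(V\otimes\mathrm{sgn})^{\oplus 2}$. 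The standard representation $V$ does not occur, because $X_f/S_3=\P^1_X$ has genus $0$.
\item The sign part is $E_f=X_f/A_4$, occurring once, and the $V\otimes\mathrm{sgn}$ part is $(E'_f)^3$.
\end{enumerate}
This is \eqref{g4decompose}: $\jac(X_f)\sim E_f\times(E'_f)^3$, equivalently $\jac(\c_f)\times(E'_f)^2$, not $A^2\times B$.

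\textbf{Rationality.} The paper obtains this decomposition over $\Q$ from Kani--Rosen applied to the Brauer relations $R_2,R_4,R_5$. For instance, $R_5$ gives $\jac(X_f)\sim E'_f\times\jac(\c_f)\times\jac(X_f/C_4)$. This works over $\Q$ because the arithmetic monodromy group of $f(X)-x$ over $\Q(x)$ equals the geometric one, $S_4$, so $S_4$ acts on $X_f$ over $\Q$. That settles your ``main obstacle'', and the point-counting fallback is unnecessary. Your proposed relation built only from transposition quotients cannot work: those quotients see only $V\otimes\mathrm{sgn}$ and never $E_f$.

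Your reduction survives the correction. Once $E_f\sim_\Q E'_f$, we get $E_f\times(E'_f)^3\sim E^4$ just as well.

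\textbf{The computational steps.} Three adjustments are needed.
\begin{enumerate}[(i)]
\item The isogenies that actually occur have degrees $3$, $7$ and $15$. Your expectation in (iii) of an isogeny of degree $2^a3^b$ therefore fails for the second and third polynomials. Identify isogeny classes instead of searching for small-degree isogenies: use the LMFDB, or equal conductors plus agreement of $a_p$ up to the Sturm bound, then Faltings.
\item Computing honest models of $E_f$ and $E'_f$, as you propose, is a good choice. It avoids the quadratic-twist ambiguity that the paper has to resolve by hand after intersecting with modular polynomials.
\item Step (iv) is not a formality in the second case. For the first and third polynomials the displayed curve is literally $E'_f\colon y^2=x^3-Ax^2-B^2$, after $x\mapsto x+5$, resp.\ $(x,y)\mapsto((x+15)/9,\,y/27)$. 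But $y^2=x^3+45x-18$ has $j$-invariant $3375/2=j(Z_f)$: it is the $(-1)$-twist of $\e_f$, a third member of the $21$-isogeny class. There you must place it in the full isogeny class rather than just compare it with $E_f$ and $E'_f$.
\end{enumerate}
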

\subsection{Acknowledgments and funding}
I am grateful to Jordan Ellenberg for sharing with me his insights on the topic of critical values, and for his suggestion of comparing the bad reduction of the critical and Prym families. I would also like to thank my advisors, Emmanuel Kowalski and Umberto Zannier, for their critical research advice.

\noindent
The author was partially supported by the Swiss National Science Foundation (grant number 219220).
\section{Critical values of quartic polynomials}\label{2}
In order to make this work self-contained, we briefly recall some basic facts about polynomials and their critical values. We kindly point the reader interested in more detail to \cite{nac}.

\subsection{Basics on critical values}

Recall that, for a polynomial $f\in k[X]$ of degree $d$, the critical values $C_f$ are the branch points of the induced map $f:\P^1_{k}\to\P^1_{k}$. They consist of $\infty$---over which there is total ramification---and the points in $f(Z_f)$, where the $Z_f$ is the (multi)set of roots of $f'$, i.e$.$ the finite ramification points. If $X_0\in Z_f$, we call $X_0$ a \textit{critical point} of $f$. Let us remark that the set of monic polynomials $f\in k[X]$ of degree $d$ and such that $f(0)=0$ can be identified with $\A^{d-1}_k$, via the bijection $f\mapsto Z_f$. The group of $k$-linear maps $\mathrm{Aff}(k)=\{z\mapsto az+b, \ (a,b)\in k^\times\times k\}$ acts coordinate-wise on $\A^{d-1}_k$, and we have\begin{align}
Z_{\alpha\circ f}=Z_f, \quad C_{\alpha\circ f}=\alpha(C_f)\label{inv1}
\end{align}for any $\alpha\in\mathrm{Aff}(k)$; so, restricting to monic polynomials with zero constant term loses no information about critical points and values. In the (generic, under the above identification) locus where critical points are distinct, a simple application of the Riemann-Hurwitz formula yields $$e_{X_0}(f)=2$$for any $x\in Z_f$. Moreover, as shown in \cite[Lemma 2.2]{nac}, critical values are also generically distinct; following \cite[4.4]{serTopGal}, we refer to a polynomial with distinct critical values as \textit{Morse.}

In addition to \eqref{inv1} we also have \begin{align}
    Z_{f\circ\alpha}=\alpha^{-1}(Z_f)\text{   and   }C_{f\circ\alpha}=C_f\label{inv2}
\end{align}for any $\alpha\in\mathrm{Aff}(k)$.
The map sending the $\mathrm{Aff}(k)$-class of $\hat x\in\A^{d-1}_k$ to that of $\hat y\in\A^{d-1}_k$ such that there is $f\in k[X]$ with $Z_f=\hat x$ and $C_f=\hat y$ is, therefore, well-defined. It is a generically étale morphism of algebraic varieties defined over $\Q$ \cite[\S2.1]{nac}. When $d=4$, it is a map between two curves, since the quotient by the affine relation reduces the dimension by $2$. We now let $k\subset\C$. The base curve is naturally $Y(1)$, with coordinate the $j$-invariant $j_{\text{CV}}$ of the critical values. In \cite{nac} we showed that the covering curve---the (reduced) Hurwitz space of quartic polynomials---is then simply the modular curve $Y_0(3)$ (or $Y_1(3)$---they are the same curve over $\Q$, see \cite[Corollary 4.2.2]{nac}) with coordinate the $j$-invariant $j$ of the quadruple of critical points (which here are taken to include $\infty$, see \cite[\S 3.1]{nac}). The above morphism is then given by \begin{align}
    j_{\text{CV}}=2^{-18}\frac{j(j-1536)^3}{j-1728}=\pi_3\left(\frac{j}{64}-27\right),\label{jcv}
\end{align}where $\pi_3:X_0(3)\longrightarrow X(1)$ is the classical Hauptmodul map $u\mapsto\frac{(u+3)^3(u+27)}{u}$ \cite[p.23]{suzy}. This is shown in \cite[Lemma 4.3]{nac}.
\subsection{The case of quartics}
\begin{defi}
    We call a quartic $f\in k[X]$ \textbf{good} if $j(Z_f)\notin\{1728,\infty\}$, otherwise we call $f$ \textbf{bad}.
\end{defi}
\begin{reml}
\begin{enumerate}[(i)]
    \item By \eqref{jcv}, a quartic is good if and only if the $j$-invariant of its critical values lies in $k$ (that is, it is not $\infty$).
    \item The bad quartics with $j(Z_f)=1728$ are those linearly equivalent (over $k$) to the Chebyshev quartic $8X^4-8X^2+1$, while those with $j(Z_f)=\infty$ are those linearly equivalent to $X^4+X^3$ or $X^4$. This follows, for instance, from the computations at the end of \cite[\S 5.2]{nac}.
\end{enumerate} \label{bad quartics}
\end{reml}
\begin{defi}
    Given a quartic $f\in K[X]$, the \textbf{critical curve relative to $f$} is the projective curve over $K$ with affine model\begin{align}
        E_f: y^2=\operatorname{Disc}_X(f(X)-x).\label{disc}
    \end{align}
\end{defi}
\begin{lem}
    The curve $E_f$ is smooth if and only if $f$ is good, in which case it is an elliptic curve and we have $E_f[2]=C_f$. If $f$ is monic, a Weierstrass model for $E_f$ over $K$ is given by \begin{align}
        -y^2=\prod_{v\in C_f}(x-v).\label{model}
    \end{align}\label{crit}
\end{lem}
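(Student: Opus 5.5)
The argument rests on the classical factorization of the discriminant of $f(X)-x$ as a polynomial in $X$. Write $f=aX^4+\dots$ with $a\in K^\times$, and let $\alpha_1,\alpha_2,\alpha_3$ be the roots of $f'$ in $k$, counted with multiplicity. I would use the resultant identity
\begin{align*}
\operatorname{Disc}_X(f(X)-x)=c\cdot\operatorname{Res}_X\big(f(X)-x,\,f'(X)\big)=c'\prod_{i=1}^3\big(f(\alpha_i)-x\big),
\end{align*}
where $c,c'\in K^\times$ depend only on $a$ and the degree. Concretely, $\operatorname{Disc}(g)=(-1)^{d(d-1)/2}\,\mathrm{lc}(g)^{-1}\operatorname{Res}(g,g')$, and $\operatorname{Res}(g,f')=\mathrm{lc}(f')^{\deg g}\prod_i g(\alpha_i)$. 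Thus $\operatorname{Disc}_X(f(X)-x)$ is a cubic in $x$ with roots exactly the finite critical values $f(Z_f)$, counted with multiplicity.

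Tracking the sign: $(-1)^{6}=1$ and $\prod_i(f(\alpha_i)-x)=-\prod_i(x-f(\alpha_i))$. When $a=1$ the constant becomes $-4^4=-256$, so the equation reads $y^2=-256\prod_{v\in C_f\setminus\{\infty\}}(x-v)$. Rescaling $y\mapsto 16y$ gives the model \eqref{model}, which is defined over $K$ because the coefficients of the cubic are symmetric in the $\alpha_i$. For general $a$ the equation is a twist of this one, and the smoothness question is unaffected.

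Next I would establish the smoothness criterion. A curve $y^2=$ (cubic) is smooth of genus $1$ if and only if the cubic has three distinct roots. Its point at infinity is automatically a smooth point, since the degree is odd, and it gives a rational point $O$, so $E_f$ is then an elliptic curve. It therefore suffices to show that the critical values $f(\alpha_i)$ are pairwise distinct, i.e.\ $f$ is Morse, if and only if $f$ is good.

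This equivalence is the main step, and I would derive it from \eqref{jcv} together with Remark \ref{bad quartics}(i). The $j$-invariant $j_{\mathrm{CV}}$ of the quadruple $C_f$, which includes $\infty$, is finite exactly when the four points are distinct, i.e.\ when the three finite critical values are distinct. By \eqref{jcv}, $j_{\mathrm{CV}}$ is finite if and only if $j(Z_f)\notin\{1728,\infty\}$. One must also treat the degenerate cases where critical points collide, giving $j(Z_f)=\infty$. These can be checked directly on the representatives $X^4+X^3$ and $X^4$ from Remark \ref{bad quartics}(ii): for $X^4$ the discriminant is $-256x^3$, and for $X^4+X^3$ it is a cubic with a repeated root $0$. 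In the Chebyshev case $8X^4-8X^2+1$, the critical values are $1,-1,-1$, which also repeat. Both directions then follow.

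Finally, the $2$-torsion points of $y^2=c\prod(x-v)$ are $O$ together with the points $(v,0)$. Identifying $O$ with $\infty$ gives $E_f[2]=C_f$ as sets.
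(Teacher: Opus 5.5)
Your proposal is correct and follows the same route as the paper. You identify $\operatorname{Disc}_X(f(X)-x)$ as the cubic $-2^8a^3\prod_i(x-f(\alpha_i))$ vanishing exactly at the finite critical values, get the smoothness equivalence from Remark \ref{bad quartics}(i) (i.e.\ from \eqref{jcv}), and rescale $y\mapsto 16y$ to reach \eqref{model}; your resultant computation and the direct checks on $X^4$, $X^4+X^3$ and the Chebyshev quartic just spell out what the paper's one-line appeal to that remark leaves implicit.
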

\begin{proof}
    If $f$ has leading coefficient $A$, $\mathrm{Disc}_X(f(X)-x)$ is the cubic polynomial in $x$ with leading coefficient $-2^8A^3$ which vanishes at those $x_0\in k$ for which the polynomial $f(X)-x_0$ has a repeated root, i.e$.$ the critical values of $f$. The claim then follows from Remark \ref{bad quartics}(i) after sending $y\mapsto16y$.
\end{proof}
\begin{reml}\begin{enumerate}
    \item If $f$ is bad, a direct computation with the representatives in Remark \ref{bad quartics} (ii) shows that $E_f$ is a nodal cubic, except for the case of the linear equivalence class of $f(t)=t^4$.
    \item If $f$ is good we have $j(E_f)=j_{\text{CV}}=\pi_3\left(\frac{j(Z_f)}{64}-27\right)$, in virtue of Lemma \ref{crit} and \eqref{jcv}.
\end{enumerate}
\end{reml}
In the remainder of this work, we normalize our quartics $f\in K[X]$ to be monic and have no cubic nor constant terms: in formulas, $f(X)=X^4+2AX^2+4BX$. This can be achieved with a $K$-linear change of variable followed by a $K$-linear rescaling of the polynomial, so it has no impact on the equivalence classes of critical points and values, by \eqref{inv1} and \eqref{inv2}.
Thanks to the interpretation of $Y_0(3)$ as the moduli space of elliptic curves with a $3$-isogeny, our correspondence can be seen in the following way: 
\begin{prop}\label{3iso}
    Let $E/K$ be an elliptic curve with $j=j(E)\neq1728$, and let $Y^2=q(X)$ be a short Weierstrass equation for $E$. Define the quartic $f\in K[X]$ by\begin{align}
    f(X)=f_E(X):=\int_0^Xq(u)du.\label{3div}
\end{align} Then, the critical elliptic curve $E_f$ has a $3$-isogeny over $K$.
\end{prop}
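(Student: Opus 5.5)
The plan is to reduce the statement to the modular description of the critical $j$-invariant, namely $j(E_f)=\pi_3\left(\frac{j(Z_f)}{64}-27\right)$ from the Remark following Lemma \ref{crit}, together with the moduli interpretation of $Y_0(3)$.

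\textbf{Critical points and goodness.} Write $q(X)=X^3+aX+b$ with $a,b\in K$. Then $f'=q$, so the finite critical points of $f$ are the roots $e_1,e_2,e_3$ of $q$. These are the $x$-coordinates of the nontrivial $2$-torsion of $E$, and they are distinct because $E$ is smooth. The quadruple of critical points, with $\infty$ included as in \cite[\S3.1]{nac}, is therefore $\{e_1,e_2,e_3,\infty\}$. This is exactly the branch locus of the double cover $E\to\P^1$, $(X,Y)\mapsto X$. Since the $j$-invariant of four points of $\P^1$ equals the $j$-invariant of the genus-one double cover branched over them, we get $j(Z_f)=j(E)$.

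The hypothesis $j(E)\neq 1728$, together with $j(E)\neq\infty$, then says precisely that $f$ is good. By Lemma \ref{crit}, $E_f$ is an elliptic curve over $K$, and the Remark gives $j(E_f)=\pi_3(u)$ with $u:=j(E)/64-27\in K$. If one prefers to stay within the paper's normalisation, first rescale $f$ to the form $X^4+2AX^2+4BX$. By \eqref{inv1} and \eqref{inv2} this does not change $E_f$ beyond a $K$-isomorphism or quadratic twist, and the twisting argument below absorbs either.

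\textbf{Transferring the isogeny to $E_f$.} Since $u\in Y_0(3)(K)$ lies over $j(E_f)$, the moduli interpretation produces an elliptic curve $E'/K$ with $j(E')=j(E_f)$ and a $K$-rational cyclic subgroup $C\subset E'$ of order $3$. This works at least when $j(E_f)\notin\{0,1728\}$, since then $Y_0(3)$ is a fine enough coarse space over those points and a model over $K$ exists.

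Under that assumption, $E_f$ is a quadratic twist of $E'$ by some character $\chi$. The twisting isomorphism, defined over $K(\sqrt d)$, intertwines the Galois actions up to multiplication by $\chi(\sigma)=\pm1$, and $\pm C=C$. Hence the image of $C$ in $E_f$ is Galois-stable, and quotienting by it gives a $3$-isogeny of $E_f$ over $K$.

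\textbf{The exceptional values.} The main obstacle is the finitely many $u$ with $\pi_3(u)\in\{0,1728\}$. There $E_f$ can be a sextic or quartic twist of $E'$, and such twists need not preserve a rational $3$-subgroup. For these finitely many values of $u$ (equivalently, finitely many values of $j(E)$) I would argue directly.

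From Lemma \ref{crit}, $E_f$ has the Weierstrass model $-y^2=\prod_i(x-f(e_i))$. I would compute its $3$-division polynomial as a polynomial in $a,b$ and look for a $K$-rational linear factor, giving a Galois-stable kernel $\{O,\pm P\}$. Here $f(e_i)=\tfrac34 a e_i^{2}\cdot\tfrac{1}{3}+\tfrac34 b e_i$ after reducing with $q(e_i)=0$; concretely $f(e_i)=\tfrac{a}{4}e_i^2+\tfrac{3b}{4}e_i$. The symmetric functions of the $f(e_i)$ are then polynomials in $a,b$.

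I expect this division polynomial to have a rational root expressible in terms of $a$ and $b$, valid identically in $a,b$. If so, this one computation proves the proposition uniformly, exceptional cases included, and the moduli argument becomes a consistency check. If no uniform root appears, I would fall back on the twisting argument for generic $u$ and treat the exceptional specialisations by hand.
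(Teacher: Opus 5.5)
Your argument works when $j(E_f)\notin\{0,1728\}$, but the remaining case is a genuine gap, not a formality. The value $j(E_f)=0$ is harmless: every curve $y^2=x^3+D$ over $K$ has the Galois-stable subgroup $\{O,(0,\pm\sqrt D)\}$, since its $3$-division polynomial is $3x(x^3+4D)$. The value $j(E_f)=1728$, however, really occurs. Since $\pi_3(u)-1728=(u^2+18u-27)^2/u$, it happens exactly when $j(E)=384(3\pm\sqrt3)$, i.e.\ whenever $\sqrt3\in K$ and $E$ has that $j$-invariant. There $E_f$ is only a quartic twist of the curve $E'$ coming from $Y_0(3)(K)$, and $j(E_f)$ alone cannot decide the question. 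Indeed, $y^2=x^3+Dx$ has $3$-division polynomial $3x^4+6Dx^2-D^2$. So it has a $K$-rational $3$-isogeny only if $D(-3\pm2\sqrt3)/3\in K^{\times2}$ for one of the two signs, and this fails for suitable $D$. You therefore need to know which twist $E_f$ is. Your proposal only states the expectation that a computation will settle this, so for these $E$ the proposition is left unproved.

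The computation you suggest does succeed, and it makes the moduli step unnecessary. Replace $f_E$ by the monic quartic $4f_E=X^4+2aX^2+4bX$; this gives the same $E_f$ up to $K$-isomorphism. Reducing with $e_i^3=-ae_i-b$, its critical values are $w_i=ae_i^2+3be_i$. By Lemma \ref{crit}, $E_f\cong\{-y^2=\prod_i(x-w_i)\}$, where $\prod_i(x-w_i)=x^3+2a^2x^2+(a^4+18ab^2)x+27b^4+2a^3b^2$. The $3$-division polynomial of this cubic is
\[3x^4+8a^2x^3+(6a^4+108ab^2)x^2+(24a^3b^2+324b^4)x-a^8-20a^5b^2-108a^2b^4,\]
and it vanishes identically at $x=a^2/3$. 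The $(-1)$-twist does not change $x$-coordinates. Hence the points of $E_f$ with $x=a^2/3$, together with $O$, form a $K$-rational subgroup of order $3$ for every $E$ with $j(E)\neq1728$. (In the model $y^2=\mathrm{Disc}_X(g(X)-x)$ with $g=\psi_3(E)/12=f_E-a^2/12$, cf.\ Remark \ref{3pt}, this kernel sits at $x=0$.)

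The paper reaches the conclusion without any computation or case split. For good $f$, the arithmetic and geometric monodromy groups are both $S_4$. So the quotient $X_f/V_4\to E_f$ of the Galois closure is an unramified $C_3$-cover defined over $K$ (Proposition \ref{clos}(ii)), and this gives a $K$-rational $3$-isogeny uniformly in $j(E_f)$.
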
 

\begin{reml}\label{3pt}
    At this stage it might be worth mentioning that the polynomial $f_E$ appearing in \eqref{3div} differs from $\frac{\psi_3(E)}{12}$ by an additive constant, where $\psi_3(E)$ is the $3$-division polynomial of $E$---normalized, as usual \cite[III, Exercise 3.7]{silv}, with leading coefficient $3$. Moreover, $E_{-\psi_3(E)}$ is one of the (four, at most) quadratic twists of $E_f$ with a $3$-torsion point over $K$, and not just a $3$-isogeny. 
    One may wonder whether the covert appearance of the $3$-division polynomial is related to our Hurwitz space being isomorphic to $Y_1(3)$; such an explanation does not appear to be immediate.
\end{reml}

\subsection{Some natural questions}

Picking up from \cite[Remark 2.1]{nac}, we now pose and answer some questions about critical values in degree $4$. For instance, it can be natural to ask which good quartics $f\in K[X]$ have one (resp$.$ three) $K$-rational critical value(s). Looking at the $G_K$-action on the equations \cite[(4)]{nac} defining them, one immediately sees that this is equivalent to $f$ having one (resp. three) $K$-rational critical point(s), which it maps to the $K$-rational critical value(s). Therefore, such $f$ are simply those with $j(Z_f)$ in the image of the modular cover $X_1(2)(K)\to X(1)(K)$ (resp. $X(2)(K)\to X(1)(K)$).

In virtue of the classification and applications of equicritical quartics obtained in \cite{nac}, it may be of some interest to consider the same question while requiring our $f$ with one (resp$.$ three) critical value(s) in $K$ to also be equicritical to a quartic $g\in K[X]$ not linearly equivalent to it (an ``equicritical twin"). From \cite[Lemma 4.1]{nac} we know that quartics with an equicritical twin are rationally parametrized (up to acting linearly on the variable and the polynomial) by $$Y_3:=Y_0(3)\underset{Y(1)}{\times} Y_0(3)\setminus\Delta,$$ where $\Delta$ is the diagonal in the fiber product. Therefore, all such $f$ can be generated from a set of representative quartics with $j(Z_f)\in Y(K)$ by acting linearly on such quartics and the variable, where $Y:=Y_3\times Y_1(2)$ (resp. $Y:=Y_3\times Y(2)$) and the fiber product is over $Y(1)$. In the first case, the genus of the resulting cover of the $j_{\text{CV}}$-line $Y(1)$ is $0$ and hence, as $x^4-x$ is such a quartic (see \cite[Theorem 1]{nac}), there are infinitely many such $f$ over any number field $K$. More interesting is the case in which we require all critical values to be in $K$:

\begin{prop}
    Let $K$ be a number field. The following are equivalent:\begin{itemize}
        \item there exist infinitely many linearly inequivalent quartics $f\in K[X]$ with three $K$-rational critical values and which admit an equicritical twin in $K[X]$;
        \item the elliptic curve $E_0:y^2=x^3+1$ has positive $K$-rank.
        \end{itemize}
\end{prop}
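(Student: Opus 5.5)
We reduce the statement to a question about rational points on a curve. By the preceding discussion, quartics with three $K$-rational critical values and an equicritical twin over $K$ are parametrized, up to linear equivalence, by the $K$-points of
$$Y:=Y_3\underset{Y(1)}{\times}Y(2), \qquad Y_3=Y_0(3)\underset{Y(1)}{\times}Y_0(3)\setminus\Delta .$$
Here $Y_3$ maps to the $j_{\text{CV}}$-line through \eqref{jcv}. The factor $Y(2)$ records the full level-$2$ structure on the critical values, since $E_f[2]=C_f$ by Lemma \ref{crit}.

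Step 1 is to show that $Y$ is a twice-punctured elliptic curve with $j$-invariant $0$. We first determine $Y_3$. The fiber product $X_0(3)\times_{X(1)}X_0(3)$ splits into the diagonal and a residual component. The residual component parametrizes pairs of distinct $3$-isogenies from a common curve, equivalently cyclic $9$-isogenies via the composite of the dual of one with the other. So $Y_3$ is (an open of) $X_0(9)$, which has genus $0$, consistent with the rational parametrization of \cite[Lemma 4.1]{nac}.

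Adjoining full level $2$, the compactification of $Y$ should be (a twist of) $X_0(9)\times_{X(1)}X(2)$. This is a quotient of $X(18)$ that is closely related to $X_0(36)$, or to $X_0(18)$ with full level $2$. The curve $X_0(36)$ has genus $1$ and is the $j=0$ curve $y^2=x^3+1$, of conductor $36$. I would confirm this directly with a genus count. The map from $X_0(9)$ to the $j$-line has degree $12$, and pulling back the $S_3$-cover $X(2)\to X(1)$, branched over $1728$ with index $2$ and over $\infty$ with index $2$, gives genus $1$ by Riemann--Hurwitz using the cusp widths $1,1,1,9$ of $X_0(9)$ and its elliptic points.

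Step 2 is to identify the $K$-model. This is the main obstacle. We must show that the compactification $\overline{Y}$ is isomorphic over $\Q$ (hence over $K$), and not merely over $\overline{\Q}$, to $E_0:y^2=x^3+1$, and that it has at least one rational point, for instance a rational cusp. I would compute an explicit equation. Take the Hauptmodul $t$ of $X_0(9)$, so that $j_{\text{CV}}$ is an explicit degree-$12$ rational function of $t$. Adjoining $X(2)$ amounts to adjoining the roots of the cubic $-y^2=\prod_{v\in C_f}(x-v)$ from \eqref{model}. Over the $\lambda$-line this means requiring $j_{\text{CV}}=256(\lambda^2-\lambda+1)^3/(\lambda^2(1-\lambda)^2)$. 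Normalizing the curve cut out by this equation and reducing it to Weierstrass form should yield $y^2=x^3+1$ or a curve $\Q$-isogenous to it. An isogeny is harmless, since rank is preserved.

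Step 3 concludes. Inequivalent quartics correspond to distinct points of $Y(K)$, up to the finitely many points above $\Delta$ and the punctures, and up to a finite-to-one forgetful ambiguity; note that the twin pairing is an involution on $Y$. Hence there are infinitely many such quartics if and only if $\overline{Y}(K)$ is infinite. Since $E_0(K)_{\mathrm{tors}}$ is finite, this holds if and only if $E_0$ has positive $K$-rank.
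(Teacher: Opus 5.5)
Your outline matches the paper's proof: parametrize by $Y=Y_3\times_{Y(1)}Y(2)$, identify $\overline Y$ with $E_0$ over $\Q$, and conclude by Mordell--Weil after discarding finitely many bad classes, cusps and diagonal points. Step 3 is fine. The gap is in Step 2, which carries the content and is not proved. You only assert that $\overline Y$ is ``(a twist of)'' $X_0(9)\times_{X(1)}X(2)$ and ``closely related'' to $X_0(36)$, and you defer the $\Q$-model to a computation you do not carry out.

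That hedge is exactly the point. A genus-one curve that is only $\overline\Q$-isomorphic to $E_0$ could be a pointless torsor or a sextic twist $y^2=x^3+k$. Such a twist is not $\Q$-isogenous to $E_0$ and can have a different $K$-rank: $y^2=x^3-2$ has positive rank over $\Q$, while $E_0(\Q)$ is finite. The paper closes this step by quoting Rubin--Silverberg ($Y_3\times Y(2)\simeq Y_6$, with $X_6\colon y^2=x^3+1$).

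You can close it without any computation, using a moduli map over $\Q$.
\begin{itemize}
\item A point of $Y$ is $(E,C_1,C_2,(P,Q))$, with $C_1\neq C_2$ subgroups of order $3$ and $(P,Q)$ an ordered basis of $E[2]$.
\item Let $\pi\colon E\to E'=E/(C_1+\langle P\rangle)$ and $\pi_2\colon E\to E/(C_2+\langle Q\rangle)$ be the quotient maps, and send the point to $E'$ together with $\ker(\pi_2\circ\hat\pi)$.
\item This kernel is cyclic of order $36$ precisely because $C_1\neq C_2$ and $\langle P\rangle\neq\langle Q\rangle$.
\item All the data is recovered from $(E',C_{36})$: $E\simeq E'/D$, where $D$ is the subgroup of order $6$ of $C_{36}$. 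Hence $Y\simeq Y_0(36)$ over $\Q$.
\item $X_0(36)$ has rational cusps and is the curve 36a1, namely $y^2=x^3+1$.
\end{itemize}

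There is also a smaller slip in Step 1. The map $Y_3\to Y(1)$ entering the fibre product is $j_{\text{CV}}=j(E)$ of the middle curve, the common source of the two $3$-isogenies. It is not $j$ of the source $E/C_1$ of your cyclic $9$-isogeny. So, as a cover of the $j$-line, $Y_3$ corresponds to $\Gamma_0(3)\cap\Gamma^0(3)=\pm\Gamma(3)$, which has four cusps of width $3$, not cusp widths $1,1,1,9$. Also, $X(2)\to X(1)$ is branched over $j=0$ with index $3$, not only over $1728$ and $\infty$.

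The outcome is unaffected. Neither cover has elliptic points and all cusp widths are odd, so Riemann--Hurwitz still gives $2g-2=6\cdot(-2)+12=0$. Moreover, $E\to E/C_1$ is an isomorphism on $2$-torsion, so your fibre product and the correct one are the same curve over $\Q$. As written, though, your genus count is for a different cover of the $j$-line than the one parametrizing the quartics, so you should say which map you are using.
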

\begin{proof}
    From the observations in the previous paragraph we find that a maximal set of such good $f$ bijects with that of $K$-rational points of $Y_3\times Y(2)\simeq Y_6$, where we again adopt Rubin and Silverberg's notation from \cite{rs}. In \cite[Theorem 2.1]{rs} they show that $X_6$ has affine model $y^2=x^3+1$; since bad quartics account for finitely many cases up to equivalence, we are done by the Mordell-Weil Theorem.
\end{proof}
\begin{reml}
    We have $E_0(\Q)\simeq\Z/6\Z$, so there are finitely such $f$ with rational coefficients. Using the parametrization \cite[Theorem 1]{nac}, any such good $f$ would have to either share the same critical values of $x^4-x$---which are not all rational---or have, up to a linear change of variable, derivative $x^3-3tx-2$ for some $t\in\Q\setminus\{-2,0,1\}$. It is not hard to see that the only such cubic to have three rational roots is $x^3-3x-2=(x-1)^2(x+2)$, so there are no such good $f\in\Q[x]$, and all six rational points of $E_0$ correspond to bad quartics or cusps introduced by compactifying the affine model of $Y_6$.
\end{reml}

Another question we can ask is which quartics $f\in\overline\Q[X]$ satisfy \begin{align}
    Z_f=C_f.\label{cp=cv}
\end{align}This has, for instance, the application of exhibiting \textit{post-critically finite (PCF)} \cite{poirier} quartic polynomials, whose complete classification is still an open problem. We restrict to good quartics as the nontrivial case. Surely, equality of critical points and values implies equality of their $j$-invariants, so from \cite[(21)]{nac} we obtain the equation $$j=\frac{j(j-1536)^3}{2^{18}(j-1728)}$$for the $j$-invariant of the critical points. This means that either $j\in\{0,\infty\}$ or $u=j-1536$ satisfies $u^3=2^{18}(u-192)$; observing that $u=2^8=256$ is a solution, we find that such an $f$ must have \begin{align}
    j(Z_f)\in\{0,1792,128(11\pm\sqrt{13})\}.\label{cf=zf}
\end{align}
Now we have to look for $f$ satisfying \eqref{cp=cv} inside each of the five linear equivalence classes satisfying \eqref{cf=zf}. Take any such $f$ with $j(Z_f)=j=j(C_f)$; then there exists $\alpha\in\PGL_2(\C)$ mapping $Z_f\cup\{\infty\}$ to $C_f\cup\{\infty\}$ as sets. Up to post-composing $\alpha$ with the element $\beta\in\PGL_2(\C)$ that fixes the set $C_f\cup\{\infty\}$ and swaps $\infty$ with $\alpha(\infty)$, we can assume that $\alpha$ is affine. Letting $f_1=\alpha^{-1}\circ f$ and $f_2=f\circ\alpha^{-1}$, we then have $Z_{f_i}=C_{f_i}$ in virtue of \eqref{inv1} and \eqref{inv2}. Moreover, all quartics linearly equivalent to $f$ satisfying \eqref{cp=cv} can clearly be obtained from this procedure for such an $\alpha$, so their set is $$\bigcup_{\alpha\in\mathrm{Aff}(\C)}\alpha\mathrm{Stab}_{\mathrm{Aff}(\C)}(Z_f)\circ f_1\circ\mathrm{Stab}_{\mathrm{Aff}(\C)}(Z_f)\alpha^{-1}.$$Now, for $j\neq0,1728$ those stabilizers are trivial, as a quadruple with such a $j$-invariant is stabilized exactly by a Klein-four subgroup $C_2\times C_2\leq\PGL_2(\C)$ acting faithfully on it. Therefore, for $j(Z_f)\in\{1792,128(11\pm\sqrt{13})\}$ there is exactly one such quartic up to the conjugation action of $\mathrm{Aff}(\C)$. We can find a representative with minimal coefficient field as follows: take any elliptic curve $E_j/\Q(j)$ with $j$-invariant $j$ and let $X_i$ be its Weierstrass roots and $f$ a primitive of its Weierstrass polynomial. Compute $x_i=f(X_i)$ and take an affine transformation $\alpha\in\mathrm{Aff}(\Q(j))$ mapping $\{X_i\}$ to $\{x_i\}$ (this exists by construction). Then, $\alpha^{-1}\circ f\in\Q(j)[X]$ is such a representative. Taking $j=1792$ and $f(X)=X^4-42X^2-28X-154$---whose critical points are the Weierstrass roots of the elliptic curve with Cremona label 1764j1, having $j$-invariant $1792$---we find that $C_f=63Z_f-448$. This yields a new example of quartic PCF conjugacy class:
\begin{cor}
    The quartic $\frac1{63}(X^4-42X^2-28X+294)$ is post-critically finite.
\end{cor}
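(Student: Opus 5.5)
The plan is to deduce the corollary directly from the identity $C_f=63Z_f-448$ for $f(X)=X^4-42X^2-28X-154$, using the transformation rules \eqref{inv1}. Let $\alpha(z)=63z-448\in\mathrm{Aff}(\Q)$. Then
\[
g:=\alpha^{-1}\circ f=\tfrac1{63}(f+448)=\tfrac1{63}(X^4-42X^2-28X+294),
\]
since $-154+448=294$. By \eqref{inv1} we have $Z_g=Z_f$ and $C_g=\alpha^{-1}(C_f)$. The identity $C_f=\alpha(Z_f)$ then gives $C_g=Z_f=Z_g$ as sets, so $g(Z_g)=Z_g$. Thus every finite critical point of $g$ is sent by $g$ into the finite set $Z_g$, and has finite forward orbit. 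The remaining critical point $\infty$ is fixed. Hence $g$ is post-critically finite.

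The real content is therefore checking $C_f=63Z_f-448$, which I would verify as follows rather than quoting it. First, $f'(X)=4(X^3-21X-7)$, so $Z_f$ is the root set of $h(X)=X^3-21X-7$. For a root $x$ of $h$ we have $x^4=21x^2+7x$. Hence
\[
f(x)=-21x^2-21x-154,\qquad \alpha^{-1}(f(x))=\tfrac13\bigl(-x^2-x+14\bigr)=:\sigma(x).
\]
So it suffices to show that $\sigma$ permutes the roots of $h$.

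The cubic $h$ has discriminant $-4(-21)^3-27\cdot 49=35721=189^2$. It is irreducible over $\Q$, since by the rational root test its only possible rational roots are $\pm1,\pm7$, and none of these is a root. So $h$ has cyclic Galois group $C_3$, and its splitting field is $\Q(x)$. A polynomial expression $\sigma(x)\in\Q(x)$ that is again a root of $h$ and differs from $x$ is then a generator of $\mathrm{Gal}(\Q(x)/\Q)$, and it permutes the three roots. The main step is thus to check that $h(\sigma(x))\equiv 0 \pmod{h(x)}$. To do this, expand $(-x^2-x+14)^3-189(-x^2-x+14)-189$, which is $27\,h(\sigma(x))$, and reduce it modulo $x^3-21x-7$; the remainder should vanish. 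One also has to note that $\sigma(x)\neq x$, which holds because $x$ has degree $3$ and so does not satisfy the quadratic $x^2+4x-14=0$.

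This reduction is the only computational step and the one I would expect to be the obstacle, purely as bookkeeping. It can be shortened by first reducing $x^4$, $x^5$ and $x^6$ modulo $h$, or by writing $\sigma$ in terms of the trace-zero basis of $\Q(x)$. Once it holds, the root set satisfies $\sigma(Z_f)=Z_f$, that is, $\alpha^{-1}(C_f)=Z_f$, and the argument of the first paragraph completes the proof.
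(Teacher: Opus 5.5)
Your proposal is correct and follows the paper's argument: apply \eqref{inv1} with $\alpha(t)=63t-448$ to $f(X)=X^4-42X^2-28X-154$ and use the identity $C_f=63Z_f-448$, which gives $Z_g=C_g$ for $g=\alpha^{-1}\circ f$ and hence post-critical finiteness. The only difference is that the paper obtains $C_f=63Z_f-448$ from its $j=1792$ construction (via the curve 1764j1), whereas you verify it by hand. Your verification is sound, and the reduction does close: modulo $X^3-21X-7$ one gets $s^2\equiv-6x^2+21x+210$ and $s^3\equiv-189x^2-189x+2835$, so $s^3-189s-189\equiv0$. It also recovers the paper's later remark that $g$ permutes the critical points by a $3$-cycle.
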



For $j=0,$ every quartic $f\in K[X]$ is linearly equivalent over $K$ to $f_B(x)=X^4+4BX$ for some $B\in K$, and equivalence classes correspond to those of $B$ in $K^\times/(K^\times)^3$. We have $Z_{f_B}=\{\zeta,\zeta\omega,\zeta\omega^2\}$ where $\zeta^3=-B$ and $\omega$ is a primitive third root of unity. On the other hand, $C_{f_B}=\{3B\zeta\omega^i, \ i=0,1,2\}$. Therefore, \eqref{cf=zf} is satisfied by the polynomials $\alpha^{-1}\circ f_B$ and $f_B\circ \alpha^{-1}$ for $\alpha(t)=3B\omega^it$ and $i=0,1,2$: the cases where $i=1,2$ correspond to the extra nontrivial affine automorphism of a quadruple with $j$-invariant $0$. Focusing on rational quartics, we have showed:

\begin{prop}
    Up to the conjugacy action of $\mathrm{Aff}(\Q),$ the set of quartics $f\in\Q[X]$ with distinct critical points such that $Z_f=C_f$ has the following representatives:\begin{itemize}
        \item $\frac1{63}(X^4-42X^2-28X+294),$
        \item $\frac1{3B}(X^4+4BX)$ for $B\in\Q^\times/(\Q^\times)^3$.
    \end{itemize}
\end{prop}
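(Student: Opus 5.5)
The plan is to assemble the discussion preceding the statement and then add a Galois descent step, which passes from conjugacy classes over $\C$ to conjugacy classes over $\Q$.

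First I would pin down the possible $j$-invariants. If $f\in\Q[X]$ has distinct critical points and $Z_f=C_f$, then $j(Z_f)=j(C_f)$. The value $j(Z_f)=\infty$ is excluded because the critical points are distinct. The value $1728$ is excluded because by \eqref{jcv} it would force $j(C_f)=\infty$. So \eqref{cf=zf} applies. Since $f$ is rational, $j(Z_f)\in\Q$, which rules out $128(11\pm\sqrt{13})$. We are left with the two cases $j(Z_f)=1792$ and $j(Z_f)=0$.

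For $j(Z_f)=1792$, the discussion above shows there is exactly one solution up to $\mathrm{Aff}(\C)$-conjugacy. The example $f=X^4-42X^2-28X-154$ satisfies $C_f=\alpha(Z_f)$ with $\alpha(t)=63t-448$. Hence $\alpha^{-1}\circ f=\frac1{63}(X^4-42X^2-28X+294)$ has $Z=C$ by \eqref{inv1}, and it is defined over $\Q$.

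It remains to show that any rational $g$ in this class is conjugate to it over $\Q$ rather than only over $\C$. Suppose $\beta,\beta'\in\mathrm{Aff}(\C)$ both satisfy $\beta\circ f\circ\beta^{-1}=g$. Then $\beta^{-1}\beta'$ commutes with $f$. By \eqref{inv1} and \eqref{inv2} it therefore stabilizes $Z_f$ as an affine map, so it is trivial since $j\neq0,1728$. Thus the conjugator is unique. Applying any $\sigma\in G_\Q$ to the relation gives another conjugator $\sigma(\beta)$, so uniqueness forces $\sigma(\beta)=\beta$, that is, $\beta\in\mathrm{Aff}(\Q)$.

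For $j(Z_f)=0$, I use that $f$ is linearly equivalent over $\Q$ to some $f_B=X^4+4BX$, and that the class is determined by $B\in\Q^\times/(\Q^\times)^3$. Within such a class, the solutions are the conjugates of $\alpha^{-1}\circ f_B$ and $f_B\circ\alpha^{-1}$ with $\alpha(t)=3B\omega^it$. I would check two things about these.
\begin{enumerate}[(i)]
    \item The two families are conjugate to each other by $\alpha$ itself.
    \item The choice of $i$ only changes the representative by conjugation with $t\mapsto\omega t$, because $f_B(\omega X)=\omega f_B(X)$.
\end{enumerate}
This reduces everything to the representative $\frac1{3B}f_B$, which is rational.

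The descent here is the main obstacle. The affine stabilizer of $Z_f$ is now $\mu_3$ rather than trivial, so $\C$-conjugacy classes can split over $\Q$ into twists indexed by $H^1(\Q,\mu_3)=\Q^\times/(\Q^\times)^3$. I would handle this by noting that a $\Q$-conjugacy between representatives is in particular a $\Q$-linear equivalence. Linear equivalence over $\Q$ sees $B$ exactly modulo cubes, so distinct cube classes give distinct conjugacy classes. Conversely, every twist is realized by some $f_B$, so the list is complete.
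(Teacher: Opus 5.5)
Your reduction to $j(Z_f)\in\{0,1792\}$ and your uniqueness-of-conjugator descent at $j=1792$ are correct. The descent makes explicit a step the paper only asserts.

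The gap is step (ii) at $j=0$, which is false. Write $m_c(t)=ct$. The identity $f_B(\omega X)=\omega f_B(X)$ says that $m_\omega$ \emph{commutes} with $f_B$, hence with $\frac1{3B}f_B$. So conjugating by $m_\omega$ returns $\frac1{3B}f_B$ unchanged. What actually holds is $\alpha_i^{-1}\circ f_B=\frac{\omega^{-i}}{3B}f_B=m_\omega^{-i}\circ\frac1{3B}f_B$, which is a linear equivalence, not a conjugacy.

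For $i=1,2$ these polynomials are not $\mathrm{Aff}(\C)$-conjugate to $\frac1{3B}f_B$ at all. Since $f_B(\zeta\omega^k)=3B\zeta\omega^k$, the polynomial $\frac1{3B}f_B$ fixes each critical point. By contrast, $\frac{\omega^{-i}}{3B}f_B$ permutes the critical points cyclically, and the permutation induced on critical points is a conjugacy invariant. Equivalently, any conjugator between two of them must stabilize $Z_{f_B}$, hence is some $m_{\omega^k}$, and these commute with all three.

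So at $j=0$ there are three $\C$-conjugacy classes, not one. Your closing $H^1(\Q,\mu_3)$ twisting argument only classifies the rational members of the class of $\frac1{3B}f_B$. Nothing you wrote excludes a rational quartic that is $\C$-conjugate to $\frac{\omega^{\pm1}}{3B}f_B$, so completeness of the list is not established.

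The repair is short and makes the twisting argument unnecessary. Let $g\in\Q[X]$ satisfy $Z_g=C_g$ and $j(Z_g)=0$, and write $g=\gamma\circ f_B\circ\delta$ with $\gamma,\delta\in\mathrm{Aff}(\Q)$.
\begin{enumerate}[(i)]
    \item By \eqref{inv1} and \eqref{inv2}, $Z_g=\delta^{-1}(Z_{f_B})$ and $C_g=\gamma\circ m_{3B}(Z_{f_B})$.
    \item Hence $\delta\circ\gamma\circ m_{3B}\in\mathrm{Aff}(\Q)$ stabilizes $Z_{f_B}=\{\zeta,\zeta\omega,\zeta\omega^2\}$.
    \item The affine stabilizer of this set is $\{m_{\omega^k}\}$, which meets $\mathrm{Aff}(\Q)$ only in the identity.
    \item Therefore $\gamma=\delta^{-1}\circ m_{3B}^{-1}$, that is, $g=\delta^{-1}\circ\frac1{3B}f_B\circ\delta$.
\end{enumerate}
Alternatively, complex conjugation acts on the critical points of a rational $j=0$ quartic as a transposition, which cannot commute with the $3$-cycle that $g$ would induce.

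Combine this with your correct observation that $\Q$-conjugacy implies $\Q$-linear equivalence, which detects $B$ exactly modulo cubes; together they give the $j=0$ part. The paper itself disposes of the $i=1,2$ solutions only by ``focusing on rational quartics'', so this is precisely where an explicit argument is needed.
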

\begin{reml}
    While for $f_B$ of the second type we clearly have $f_B(X_i)=X_i$, so the critical points are preserved point-wise, it is easy to see that for $f(X)=\frac1{63}(X^4-42X^2-28X+294)$ we have $f(X_i)=X_{\sigma(i)}$ for $\sigma$ a $3$-cycle.
\end{reml}
Finally, in the spirit of \cite[Theorem 2]{nac}, we can ask \textit{how likely} it is for a Galois orbit $\{x_1,x_2,x_3\}$ defined over $K$ to be \textit{critical over $K$}, that is, to consist of the finite critical values of a quartic in $K[X]$. In order to make such a question precise, we need to put a height function on the set of such triples; the condition that $G_K$ acts on each of them makes this equivalent to giving a height function on the set of cubics with coefficients in $K$. Let $\mathrm{H}:K^\times\to[1,\infty)$ be the usual multiplicative Weil height \cite[\S 1.2]{zannierhts} and, following Schanuel \cite{schaunel}, we work with the relative height $\mathrm{Ht}=H^{n}$, where $[K:\Q]=n$. In order to uniformize for scaling, the natural choice for our height is the following:
\begin{defi}\label{height}
    Let $\mathcal{S}=\{X_1,X_2,X_3\}\subset\overline\Q$ satisfy $(X-X_1)(X-X_2)(X-X_3)=X^3+aX^2+bX+c:=p(X)\in K[X]$. We set $$\mathrm{Ht}(\mathcal{S})=\mathrm{Ht}(p):=\max(\mathrm{Ht}(a)^6,\mathrm{Ht}(b)^3,\mathrm{Ht}(c)^2)$$and$$T_K(H)=\{\mathcal{S}=\{X_1,X_2,X_3\}\subset\overline\Q \ G_K\text{-stable}:\mathrm{Ht}(\mathcal{S})\le H\}.$$
\end{defi}
$G_K$-stable triples and cubics over $K$ with nonzero discriminant biject, so $|T_K(H)|\asymp H^{\frac26+\frac23+1}=H^2$, as the vanishing discriminant cuts out a Zariski closed subset of codimension $1$. Therefore, the logarithmic density of $K$-critical triples with respect to our height is \begin{align}\label{density}
    \delta_{\text{crit}}(K):=\lim_{H\to\infty}\frac{\log\left(\#\{\mathcal{S}=\{X_1,X_2,X_3\}\subset\overline\Q:\exists f\in K[X]:C_f=\mathcal{S}, \ \mathrm{Ht}(\mathcal{S})\le H\}\right)}{2\log H}.
\end{align}
We have:
\begin{prop}
    For a number field $K$, we have $$\delta_{\text{crit}}(K)\ge\frac23.$$
\end{prop}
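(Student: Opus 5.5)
The plan is to prove the lower bound by exhibiting an explicit, essentially injective family of $K$-critical triples and counting its members of height at most $H$ with Schanuel's theorem. We only need a lower bound, so it suffices to produce $\gg H^{4/3-\varepsilon}$ distinct triples $\mathcal{S}$ with $\mathrm{Ht}(\mathcal{S})\le H$ for which some $f\in K[X]$ has $C_f=\mathcal{S}$. I expect the exponent bookkeeping in the height comparison to be where the argument either works or fails.

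\textbf{Parametrization.} Every quartic over $K$ can be written as $\lambda\,g(X)+\mu$ with $g(X)=X^4+2AX^2+4BX$ and $\lambda\in K^\times$, $\mu\in K$. By \eqref{inv1}, $C_{\lambda g+\mu}=\lambda C_g+\mu$. Writing $r_1,r_2,r_3$ for the roots of $g'$, the relation $r^3=-Ar-B$ gives $g(r)=Ar^2+3Br$. Hence the cubic $p_{A,B}(X)=\prod_i(X-g(r_i))$ has coefficients in $\Z[A,B]$ that are weighted homogeneous: they have weights $4,8,12$ when $A$ has weight $2$ and $B$ has weight $3$. For example, $\sum_i g(r_i)=-2A^2$. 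The triple attached to $(\lambda,\mu,A,B)$ then has cubic
\[
\lambda^3\,p_{A,B}\!\left(\tfrac{X-\mu}{\lambda}\right),
\]
whose coefficients are explicit polynomials in $\lambda,\mu,A,B$. Its height is bounded, up to a constant depending only on $K$, by the appropriate powers of the heights of the parameters. This comes from the triangle inequality for the Weil height \cite[\S1.2]{zannierhts}, matched against the weights $6,3,2$ in Definition \ref{height}.

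\textbf{Counting.} Next, fix the redundancy $X\mapsto cX$, which sends $(\lambda,A,B)$ to $(\lambda c^4,A/c^2,B/c^3)$. I would do this by restricting to a one-parameter slice of $(A,B)$, such as $A=B=t$ or a similar slice meeting every generic linear class. I would then let $\lambda,\mu,t$ range over elements of $K$ with heights in boxes whose sizes are tuned so that $\mathrm{Ht}(\mathcal{S})\le H$. Schanuel's theorem \cite{schaunel} counts $\asymp X^2$ elements of $K$ with $\mathrm{Ht}\le X$, so each parameter contributes a power of $H$. The target is a total exponent of $4/3$, which gives $\delta_{\text{crit}}(K)\ge \tfrac{4/3}{2}=\tfrac23$. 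Finally, the map from parameters to triples must have uniformly bounded fibres outside a lower-dimensional exceptional set. This should follow because the triple determines $j_{\text{CV}}$, and hence $j(Z_f)$ up to the $4$ preimages under $\pi_3$ in \eqref{jcv}; it then determines the linear class up to finitely many choices, and $(\lambda,\mu)$ up to the finite stabilizer of the triple. Bad quartics, and triples with repeated roots, lie on a proper closed subset, so discarding them does not affect the exponent.

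\textbf{Main obstacle.} The hardest step is choosing the slice and the height boxes so that the exponents really add up to $4/3$. A naive count with the weights above gives a smaller exponent. So I would first check carefully how $\mathrm{Ht}(\mathcal{S})$ scales under $\mathcal{S}\mapsto\lambda\mathcal{S}+\mu$: because the height is not scale-invariant, $\lambda$ and $\mu$ should carry most of the count. I would then optimize the box for the modular parameter, possibly taking it on the $Y_0(3)$-coordinate $u$ of \eqref{jcv} rather than on $(A,B)$, since $Y_0(3)$ is rational. If a plain product box falls short, I would instead bound the number of lattice-type points directly, parametrizing by integral $(\lambda,\mu,u)$ in a region cut out by the polynomial height conditions.
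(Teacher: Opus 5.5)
You never actually carry out the count, and the family you plan to count cannot give the exponent $4/3$. For a fixed linear class, the depressed critical-value cubic transforms as $(P,Q)\mapsto(\lambda^2P,\lambda^3Q)$, and $\mu$ is determined by $a$ and $\lambda$. So $\mathrm{Ht}(\mathcal S)\le H$ essentially forces $\mathrm{Ht}(\lambda),\mathrm{Ht}(\mu)\ll H^{1/6}$, and each linear class contributes only about $H^{1/3}\cdot H^{1/3}=H^{2/3}$ triples. Thus $\lambda,\mu$ cannot ``carry most of the count''. The modular parameter would have to supply a further factor $H^{2/3}$, and a thin degree-$4$ family of critical $j$-invariants gives no reason to expect that; the paper's remark after the proposition points the same way. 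Worse, your slice $A=B=t$ has $u=A^3/B^2=t$, so it misses exactly the locus $j(Z_f)=0$: at $t=0$ it degenerates to the bad quartic $X^4$. It meets $j_{\text{CV}}=0$ only at $t=54$, i.e.\ $j(Z_f)=1536$. That is a single linear class, so it again yields only about $H^{2/3}$ triples.

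The missing idea is that the count is carried by the $j(Z_f)=0$ family. There the extra automorphism means linear classes are indexed by $B\in K^\times/(K^\times)^3$ rather than forming a single orbit. For $\lambda(X^4+4BX)+\mu$ the critical-value cubic is $(x-\mu)^3+27\lambda^3B^4$. The quantity $-27\lambda^3B^4$ takes every value $\nu\in K^\times$ (e.g.\ $B=\nu$, $\lambda=-1/(3\nu)$). Hence \emph{every} triple with $j$-invariant $0$ is $K$-critical, i.e.\ every cubic $x^3+ax^2+\frac{a^2}{3}x+c$ with $c\neq a^3/27$. This is exactly the paper's proof: count these cubics directly in $(a,c)$. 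The bounds $\mathrm{Ht}(a)\ll H^{1/6}$ and $\mathrm{Ht}(c)\le H^{1/2}$ guarantee $\mathrm{Ht}(\mathcal S)\ll H$, and Schanuel gives $\asymp H^{1/3}\cdot H=H^{4/3}$ such cubics, whence $\delta_{\text{crit}}(K)\ge\frac{4/3}{2}=\frac23$. Counting cubics rather than parameters also makes your bounded-fibre argument unnecessary.
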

\begin{proof}
    In virtue of Definition \ref{height}, we can analogously count (not necessarily short) Weierstrass models (and not isomorphism classes) of elliptic curves over $K$ with height of the model at most $H$.
    We focus just on those with $j$-invariant $0$, which is a critical $j$-invariant over $\Q$. Since the substitution $x\to x-\frac{a}3$ brings the model in short Weierstrass form, any such model has the shape $$y^2=p(x)=x^3+ax^2+\frac{a^2}3x+c, \quad c\neq\frac{a^3}{27}.$$ The condition $\mathrm{Ht}(p)\le H$ is then equivalent to having both $\mathrm{Ht}(a)\le H^{\frac16}$ and $\mathrm{Ht}(c)\le H^{\frac12}$. This gives $\asymp H^{\frac13+1}$ choices by Schanuel's Theorem, so the numerator in \eqref{density} is at least $\frac43\log H$.
\end{proof}
\begin{reml}
    The lower bound of $\frac23$ for $\delta_{\mathrm{crit}}(K)$ is likely to be sharp. Indeed, for the similar (but subtly different) problem of counting $K$-isomorphism classes of elliptic curves over $K$ with a $3$-isogeny, the relative density for $K=\Q$ is $\frac35$, but it drops to $\frac{5}{18}$ if we remove the $j=0$ family, see \cite[Proposition 4.2]{bs}. One expects this family to dominate the count also in our case; however, showing this requires good quantitative control on the amount of cancellation that can happen when changing variables from a minimal model, which seems to be a tricky problem. Moreover, for $K\neq\Q$, the issue of the general lack of a global minimal model makes working with equations even harder.
\end{reml}
\section{Inside the Galois closure}\label{3}

\subsection{Genus $2$ covers of critical elliptic curves}
The modular curve $Y_0(3)$ is already known to be the reduced Hurwitz space parametrizing classes of maps $\gamma:\P^1\to\P^1$ of degree $3$ ramifying simply above each of the four branch points (see \citep{fr1}, 4.2). This is the case because these covers are precisely those induced by $3$-isogenies of elliptic curves $E_1\to E_2$, after quotienting by the respective involutions:
 \begin{center}
        \begin{tikzpicture}[>=stealth,->,node distance=3cm]
  \node (X) at (0,2) {$E$};
  \node (Y) at (3,2) {$E'$};
  \node (P1) at (0,0) {$\P^1$};
  \node (P2) at (3,0) {$\P^1$};

  \draw[->] (X) to node[above] {} (Y);
  \draw[->] (X) to node[left] {$\pi_E$} (P1);
  \draw[->] (Y) to node[right] {$\pi_{E'}$} (P2);
  \draw[->] (P1) to node[above] {$\gamma$} (P2);
\end{tikzpicture}
    \end{center}

One may therefore be tempted to make quartic polynomials take up a similar a role, by considering a larger space of maps to a target elliptic curve, and see if they can arise as the induced maps $\P^1_X\to\P^1_x$. This is indeed the case:
\begin{defi}\label{precrit}
    Let $f\in K[X]$ be a good quartic with critical points $X_1,X_2,X_3$ and let $E_f$ be the associated critical elliptic curve. For $i=1,2,3$, let $f^{-1}(f(X_i))\setminus\{X_i\}=\{X_{i1},X_{i2}\}$, and define 
    \begin{align}\label{qsplit}
        h_f(X)=\prod_{i=1}^3(X-X_{i1})(X-X_{i2})
    \end{align}to be the \textbf{precritical sextic relative to} $f$.
\end{defi}
\begin{reml}
    Since $f$ is good, $h_f$ has no repeated root. Therefore, the normalization of the curve $\c_f:-Y^2=h_f(X)$ is a smooth genus-$2$ curve with divisor at infinity supported at two points.
\end{reml}
\begin{prop}\label{cover}
    The curve $\c_f$ is defined over $K$. There is a map $\phi_f:\c_f\to E_f$ given by $(X,Y)\mapsto(f(X),\frac{1}{4}Yf'(X))$, for which the diagram
    \begin{center}
        \begin{tikzpicture}[>=stealth,->,node distance=3cm]
  \node (X) at (0,2) {$\c_f$};
  \node (Y) at (3,2) {$E_f$};
  \node (P1) at (0,0) {$\P^1_X$};
  \node (P2) at (3,0) {$\P^1_x$};

  \draw[->] (X) to node[above] {$\phi_f$} (Y);
  \draw[->] (X) to node[left] {$\pi_{\c_f}$} (P1);
  \draw[->] (Y) to node[right] {$\pi_{E_f}$} (P2);
  \draw[->] (P1) to node[above] {$f$} (P2);
\end{tikzpicture}\label{diag}
    \end{center}
    commutes.
\end{prop}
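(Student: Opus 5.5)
The plan is to reduce everything to a single polynomial identity in $K[X]$. This identity shows simultaneously that $h_f$ has coefficients in $K$ and that $\phi_f$ lands on the Weierstrass model \eqref{model} of $E_f$. Throughout, $f$ is monic (the normalization fixed above), so $f'(X)=4\prod_{i=1}^3(X-X_i)$.

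\emph{Step 1: factor $f(X)-f(X_i)$.} Since $f$ is good, $j(Z_f)\neq\infty$, so the critical points $X_1,X_2,X_3$ are distinct. Each $X_i$ is therefore a root of $f(X)-f(X_i)$ of multiplicity exactly $2$. By Definition \ref{precrit}, the two remaining roots are $X_{i1},X_{i2}$, which gives
\begin{align*}
f(X)-f(X_i)=(X-X_i)^2(X-X_{i1})(X-X_{i2}).
\end{align*}
Writing $v_i=f(X_i)$, so that $C_f=\{v_1,v_2,v_3\}$, and multiplying over $i$ yields
\begin{align*}
\prod_{i=1}^3\bigl(f(X)-v_i\bigr)=\prod_{i=1}^3(X-X_i)^2\cdot h_f(X)=\frac{f'(X)^2}{16}\,h_f(X).
\end{align*}

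\emph{Step 2: rationality of $\c_f$.} The left-hand side of the last identity is $\prod_{v\in C_f}(f(X)-v)$. This is the cubic $\prod_{v\in C_f}(x-v)\in K[x]$ evaluated at $x=f(X)$; its coefficients lie in $K$ by Lemma \ref{crit}, or directly because $G_K$ permutes $C_f$. Also $f'\in K[X]$. Hence $h_f=16\prod_{v\in C_f}(f(X)-v)/f'(X)^2$ is a quotient of polynomials in $K[X]$ that is itself a polynomial, so $h_f\in K[X]$. Consequently the affine curve $-Y^2=h_f(X)$ and its normalization $\c_f$ are defined over $K$.

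\emph{Step 3: the map $\phi_f$ and commutativity.} Set $x=f(X)$ and $y=\tfrac14 Yf'(X)$. On $\c_f$ we get
\begin{align*}
-y^2=-\tfrac{1}{16}Y^2f'(X)^2=\tfrac1{16}f'(X)^2h_f(X)=\prod_{v\in C_f}\bigl(x-v\bigr).
\end{align*}
Thus $(X,Y)\mapsto(x,y)$ sends the affine part of $\c_f$ into the model \eqref{model} of $E_f$, and it is given by polynomials with coefficients in $K$. Since $\c_f$ is a smooth projective curve, this rational map extends uniquely to a $K$-morphism $\phi_f:\c_f\to E_f$. It is nonconstant, because its $x$-coordinate is $f(X)$. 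Commutativity of the diagram is then immediate: both $\pi_{E_f}\circ\phi_f$ and $f\circ\pi_{\c_f}$ send $(X,Y)$ to $f(X)$, and two morphisms of curves that agree on a dense open set agree everywhere.

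I expect no serious obstacle. The only points needing care are that goodness guarantees the critical points are distinct, which is what makes each double root exactly double and hence the factorization in Step 1 exact, and that the monic normalization makes the constant $\tfrac14$ match the model \eqref{model}.
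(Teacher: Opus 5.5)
Your proof is correct and follows the same route as the paper. Both arguments multiply the factorizations $f(X)-f(X_i)=(X-X_i)^2(X-X_{i1})(X-X_{i2})$ to get the identity $\prod_{v\in C_f}(f(X)-v)=\frac{1}{16}f'(X)^2h_f(X)$, and this single identity gives both $h_f\in K[X]$ and that $(X,Y)\mapsto\bigl(f(X),\tfrac14Yf'(X)\bigr)$ lands on the model \eqref{model}. You also spell out details the paper leaves implicit: goodness forces distinct critical points (and, implicitly through Definition \ref{precrit}, distinct critical values), and the rational map extends to the smooth projective curve $\c_f$, where commutativity holds because it holds on a dense open set.
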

\begin{proof}
    If $\phi_f$ is well defined then the commutativity follows trivially, so we just need to verify that \begin{align*}y^2=\frac{1}{16}Y^2(f'(X))^2=-\frac{1}{16}(f'(X))^2h_f(X)\overset{?}{=}-p(f(X)),\end{align*} where $p$ is the polynomial at the right-hand side of \eqref{model}. Since $f$ has distinct critical values $f(X_i)$, we have \begin{align*}
        p(f(X))=\prod_{i=1}^3(f(X)-f(X_i))=\prod_{i=1}^3(X-X_i)^2(X-X_{i1})(X-X_{i2})=\frac{1}{16}(f'(X))^2h_f(X).
    \end{align*}
    As $f$ and $p$ have coefficients in $K$, the last equation implies $h_f\in K[X],$ so $\c_f$ is defined over $K$.
\end{proof}
\begin{reml}\label{inf2}
    As $\pi_{\c_f}$ and $\pi_{E_f}$ have the same degree, $\c_f$ is just the fiber product $\P^1_X\underset{\P^1_x}{\times} E_f$, the map $\P^1_X\to \P^1_x$ being $f$. Both points at infinity $\infty^{\pm}$ are sent to $0_{E_f}$ by $\phi_f$.
\end{reml}

Now the question becomes: what does the cover $\phi_f$ have to do with $E_f$ having a $3$-isogeny? As Fried remarks in \citep{fr1}, one-dimensional (reduced) Hurwitz spaces of covers of $\P^1$ are always quotients of the upper half plane $\h$ by finite index subgroups of $\Gamma(1)$---with the $j$-line $\Gamma(1)\backslash\h$ taken as parametrizing the branching locus; but they need not be \textit{congruence} modular curves.

In recent work \citep{chen}, Chen surveys how noncongruence modular curves can be viewed as (components of) moduli spaces $\m(H)$ for $H$-covers $\phi:C\to E$ of elliptic curves branched at most above the origin. He remarks that, while the \textit{abelian} case---that of unramified covers, and hence isogenies \cite[Proposition 3.4]{chen}---clearly yields congruence modular curves, the converse does not hold \cite[Proposition 3.4]{chen}. The reader can find the precise definition of the stacks $\m(H)$ in \cite[\S 3.2]{chen}; they are, in essence, reduced Hurwitz spaces for $H$-covers of a variable elliptic curve branching only above the origin. Here we use the letter $H$ to denote the Galois group of the cover of the (critical, in our case) elliptic curve, to differentiate from that $G$ of its projection; indeed, both in the dihedral case coming from isogenies and in our own, one has $G\simeq H\rtimes C_2.$ In Theorem \ref{t1}, we exhibit an instance of nonabelian Galois group $H$ such that $\m(H)$ is a congruence modular curve, by showing that $\m(A_4)\simeq Y_0(3)$ over $\Q$. 

\subsection{The Galois closure}
As anticipated, we now exploit our construction to directly show why one gets $Y_0(3)$ when parametrizing good quartic polynomials with given critical values. The core idea is to look at the Galois closure $X_f$ of our quartic $f$:
\begin{prop}\label{clos}
    \begin{enumerate}[(i)]
        \item There is a map $g_f:X_f\to\c_f$ over $K$, which makes $X_f$ into the Galois closure of $\phi_f$.
        \item There exists an elliptic curve $\widetilde E_f/K$ and $K$-rational maps $\nu_f, \psi_f$ of degrees $4$ and $3$ respectively, with $\psi_f$ unramified, such that the following diagram commutes:
        
        \begin{center}
        \begin{tikzpicture}[>=stealth,->,node distance=3cm]
  \node (X) at (0,2) {$\c_f$};
  \node (Y) at (3,2) {$E_f$};
  \node (P1) at (0,0) {$\P^1_X$};
  \node (P2) at (3,0) {$\P^1_x$};
  \node (P) at (-2.5,3.5) {$X_f$};
  \node (Q) at (0.5,3.5) {$\widetilde E_f$};

  \draw[->] (P) to node[above] {$\nu_f$} (Q);
  \draw[->] (Q) to node[above] {$\psi_f$} (Y);
  \draw[->] (X) to node[above] {$\phi_f$} (Y);
  \draw[->] (X) to node[left] {$\pi_{\c_f}$} (P1);
  \draw[->] (Y) to node[right] {$\pi_{E_f}$} (P2);
  \draw[->] (P1) to node[above] {$f$} (P2);
  \draw[->] (P) to node[below] {$g_f$} (X);
\end{tikzpicture}\label{diag}
    \end{center}
    \end{enumerate}
\end{prop}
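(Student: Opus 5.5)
The plan is to work entirely inside the Galois closure of $f:\P^1_X\to\P^1_x$, with function fields and group theory. Since $f$ is good, it is Morse, so its geometric monodromy group is $S_4$. The local monodromies are three transpositions over the finite critical values and a $4$-cycle over $\infty$. The arithmetic monodromy group lies between the geometric one and $S_4$, so it is also $S_4$. Hence the Galois closure $X_f$ of $K(X)/K(x)$ is a geometrically irreducible curve over $K$. Riemann--Hurwitz gives its genus: $2g-2=24(-2)+3\cdot 12+18=6$, so $g=4$. I identify the intermediate curves via the Galois correspondence, writing $S_3=\mathrm{Stab}(X)$ for the point stabiliser corresponding to $K(X)$.

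First, $E_f$. Its function field $K(x,\sqrt{\operatorname{Disc}_X(f(X)-x)})$ is the quadratic subfield of $K(X_f)/K(x)$ cut out by the discriminant of $f(X)-x$, i.e.\ the fixed field of $A_4$. So $E_f=X_f/A_4$, and this matches \eqref{disc}. Second, $\c_f$. By Remark \ref{inf2}, $K(\c_f)$ is the compositum $K(X)\cdot K(E_f)$, which is the fixed field of $S_3\cap A_4=A_3$. Thus $g_f:X_f\to X_f/A_3=\c_f$ is defined over $K$, and $X_f\to E_f$ is Galois with group $A_4$. Since the core of $A_3$ in $A_4$ is trivial, $X_f$ is the Galois closure of $\phi_f$. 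This proves (i).

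For (ii), I take $\widetilde E_f:=X_f/V_4$, where $V_4\trianglelefteq A_4$ (indeed $V_4\trianglelefteq S_4$, so the quotient is defined over $K$). Then $\nu_f$ has degree $4$, and $\psi_f:\widetilde E_f\to E_f$ is cyclic of degree $3$ with group $A_4/V_4$. To show that $\psi_f$ is unramified, I compute the inertia groups of $X_f\to E_f$ as intersections with $A_4$ of the inertia groups of $X_f\to\P^1_x$. Over a finite critical value the inertia is generated by a transposition, and its intersection with $A_4$ is trivial. Over $\infty$ it is $C_4\cap A_4=C_2$, generated by a double transposition, which lies in $V_4$. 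All other points are unramified. So every inertia group of $X_f\to E_f$ lies in $V_4$, and therefore $\psi_f$ is étale. Riemann--Hurwitz then forces $\widetilde E_f$ to have genus $1$.

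The step I expect to be the main obstacle is producing a $K$-rational point on $\widetilde E_f$, so that it is an elliptic curve over $K$ rather than just a torsor. My plan is to use the $S_3=S_4/V_4$-cover $\widetilde E_f\to\P^1_x$ and its intermediate curve $X_f/D_4$, which is the resolvent-cubic cover. The $4$-cycle over $\infty$ lies in exactly one of the three $D_4$'s. Hence over $x=\infty$ the resolvent cover has one ramified point of index $2$ and one unramified point. The ramified point is unique of its kind, so it is $K$-rational. Above it, the double cover $\widetilde E_f\to X_f/D_4$ is totally ramified, because the image of $C_4$ in $S_3$ is a $C_2$ not contained in $D_4/V_4$'s complement. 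This gives a unique, hence $K$-rational, point of $\widetilde E_f$, which I take as the origin. If the ramification bookkeeping here fails, the fallback is to replace $\widetilde E_f$ by its Jacobian and compose $\psi_f$ with the resulting isomorphism over $K$. Commutativity of the diagram is then automatic from the tower of fixed fields.
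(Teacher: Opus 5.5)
Your treatment of (i) is correct and is essentially the paper's argument. The same holds for the fact that $\psi_f\colon X_f/V_4\to E_f$ is an unramified cyclic cubic cover of genus one. The paper also identifies $E_f=X_f/A_4$, uses the fibre-product description of $\c_f$ together with the absence of a nontrivial normal subgroup of $A_4$ inside $C_3$, and takes $\widetilde E_f=X_f/V_4$. For étaleness, the paper argues that $\psi_f$ can only branch over $E_f[2]$. It excludes the nonzero $2$-torsion points because the local monodromy there is a transposition, and then uses that an abelian cover of an elliptic curve branched over at most one point is unramified. Your direct computation $\langle\tau\rangle\cap A_4=1$, $C_4\cap A_4=\langle(13)(24)\rangle\subset V_4$ reaches the same conclusion more cleanly, origin included.

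The paper simply asserts that $\psi_f$ is an isogeny over $K$, so you are right that a $K$-rational point of $\widetilde E_f$ must be produced. However, your bookkeeping for it is backwards. Let $Q\in X_f$ lie over $\infty$ with inertia $C_4$, and let $D=C_4V_4$ be the $D_4$ containing it. The image $P_0$ of $Q$ in $\widetilde E_f$ has stabiliser $D/V_4$ in $S_3=S_4/V_4$, so $\widetilde E_f\to X_f/D$ is ramified at $P_0$. Meanwhile, the image of $Q$ in $X_f/D$ has index $|C_4|/|C_4\cap D|=1$ over $\infty$. Equivalently, the three points of $\widetilde E_f$ over $\infty$ all have index $2$ over $\P^1_x$. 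Hence the double cover $\widetilde E_f\to X_f/D$ is totally ramified over the \emph{unramified} point $R_1$ of the resolvent cubic, and splits into two unramified points over the ramified point $R_2$.

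Your sentence ``above it, the double cover is totally ramified'' is therefore false for the point you chose. The argument works if you use $R_1$ instead: it is also unique of its kind, hence $K$-rational, and the unique point above it is then $K$-rational. A cleaner formulation: the $S_3$-action is defined over $K$, so $G_K$ preserves stabilisers. The three points of $\widetilde E_f$ over $0_{E_f}$ have as stabilisers the three distinct order-$2$ subgroups of $S_3$, so all three are $K$-rational.

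Your fallback, by contrast, does not work. A genus-one curve with no $K$-point is not $K$-isomorphic to its Jacobian, so there is no ``resulting isomorphism over $K$'' to compose with.
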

\begin{proof}
    \begin{enumerate}[(i)]
        \item We know that $f:\P^1_X\to\P^1_x$ has Galois group $G_f\simeq S_4\simeq A_4\rtimes C_2$, with the normal subgroup $A_4$ corresponding to the double cover of $\P^1_x$ obtained by adjoining to the function field $K(x)$ a root of the discriminant. This is simply $y^2=\mathrm{Disc}_X(f(X)-x)$, which is the critical elliptic curve $E_f$. Since $\c_f\simeq E_f\underset{\P^1_x}{\times}\P^1_X$ by Remark \ref{inf2}, the universal property of fiber products ensures that $X_f\to\P^1_x$ factors through $\c_f$. We know that $X_f\to E_f$ is Galois; if it had a Galois subcover $D\to E_f$ factoring through $\c_f$, then $A_4$ would have a normal subgroup contained in $C_3$, but it does not.
        \vspace{2mm}
        
        \item Since $A_4\simeq V_4\rtimes C_3$ with $V_4$ the Klein group $C_2\times C_2$, there is a $C_3$-Galois cover $$\psi_f:X_f/V_4\to E_f$$ branching at most over $E_f[2]$. If this cover branched above $P\in E_f(k)\setminus\{0_{E_f}\}$, there would be $Q\in X_f(k)$ with $e_{\pi_{E_f}(P)}(Q)>2$, contradicting the fact that the inertia is generated by the local monodromy, which is a transposition above $\pi_{E_f}(P)$. Being abelian and branched at most above the origin, $\psi_f$ is an isogeny from another elliptic curve $\widetilde E_f$ \cite[\S 1.2]{chen}. All objects are defined over $K$ since we are inside the Galois closure of $f$, so we are done.
    \end{enumerate}
\end{proof}
We now need the following converse to Propositions \ref{cover} and \ref{clos}(i):
\begin{lem}
\label{corr}
    Let $\phi:C\to E$ be a degree $4$ cover with Galois group $A_4$, where $(C,\pi_C)$ is a genus $2$ \textbf{real} hyperelliptic curve and $(E,0_E,\pi_E)$ is an elliptic curve in Weierstrass form. If $\phi^{-1}(0_E)=\{\infty^+_C, \ \infty^-_C\}$, the induced map $f_{\phi}:\P^1_t\to\P^1_x$ through $\pi_C$ and $\pi_E$ is a good quartic polynomial.
\end{lem}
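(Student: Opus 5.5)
The plan is to first show that $\phi$ is compatible with the two hyperelliptic involutions, so that $x\circ\phi$ descends to a rational function of $t$. Then I compute its degree and poles, and finally rule out the bad classes of Remark \ref{bad quartics} using the $A_4$ monodromy.

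\emph{Descent.} Write $\iota$ for the hyperelliptic involution of $C$ and fix a base point $P_0\in C$. By the universal property of the Jacobian, $\phi(P)=\phi_*\bigl(\mathrm{AJ}(P-P_0)\bigr)+\phi(P_0)$ for a homomorphism $\phi_*:\jac(C)\to E$. Since $\iota$ acts as $-1$ on $\jac(C)$, we get $\mathrm{AJ}(\iota P-P_0)=-\mathrm{AJ}(P-P_0)+\mathrm{AJ}(\iota P_0-P_0)$. Hence $\phi\circ\iota=[-1]\circ\phi+c$ for a constant $c\in E$. Because $C$ is real hyperelliptic, $\iota$ swaps $\infty^+_C$ and $\infty^-_C$, and both map to $0_E$ by hypothesis; evaluating at $\infty^+_C$ gives $c=0$. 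Thus $\phi\circ\iota=[-1]\circ\phi$, so the function $x\circ\phi$ is $\iota$-invariant and equals $f_\phi\circ\pi_C$ for a rational function $f_\phi$ on $\P^1_t$.

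\emph{Degree and poles.} The function $x\circ\phi$ has degree $2\cdot4=8$ on $C$, so $f_\phi$ has degree $4$. Its poles lie in $\phi^{-1}(0_E)=\{\infty^\pm_C\}$, both of which lie over $t=\infty$. Hence $f_\phi^{-1}(\infty)=\{\infty\}$, and $f_\phi$ is a quartic polynomial. (It is in fact defined over the field of definition of the data, but this is not needed.)

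\emph{Goodness.} This is the step I expect to need the most care. The Galois closure of $f_\phi$ contains that of $\phi$, so the monodromy group $G$ of $f_\phi$ is a transitive subgroup of $S_4$ containing $A_4$, i.e.\ $G\in\{A_4,S_4\}$. This excludes the classes of $X^4$ (cyclic monodromy) and of the Chebyshev quartic (dihedral monodromy) by Remark \ref{bad quartics}(ii). The class of $X^4+X^3$ has monodromy $S_4$, so it needs a separate argument. Here I would use that $E=C/\!\ker$ is, by Galois correspondence in the closure of $f_\phi\circ\pi_C$, the fixed curve of $A_4$. Concretely, $E\to\P^1_x$ must coincide with the quadratic subcover of $f_\phi$ cut out by $A_4\le G$, which is the discriminant double cover $y^2=\mathrm{Disc}_X(f_\phi(X)-x)$, i.e.\ $E_{f_\phi}$. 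The delicate point is matching $\pi_E$ with this subcover rather than some other double cover; I would argue via the function fields $k(E)\subset k(C)$ inside the Galois closure, as in the proof of Proposition \ref{clos}(i). This double cover is branched exactly at the branch values of $f_\phi$ whose local monodromy is an odd permutation. For $X^4+X^3$ these are only $\infty$ (a $4$-cycle) and the simple critical value, since the other critical value has a $3$-cycle. So $E_{f_\phi}$ would have genus $0$, contradicting that $E$ is elliptic. Hence $E_{f_\phi}$ is smooth, and by Lemma \ref{crit} $f_\phi$ is good.
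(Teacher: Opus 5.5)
The gap is in the goodness step, and it sits exactly where the content of the lemma lies. Your descent step ($\phi\circ\iota=[-1]\circ\phi$ via the Albanese property and the swap of $\infty^\pm_C$) and your degree/pole count are correct. The descent in fact justifies something the paper takes for granted in the phrase ``the induced map''.

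\textbf{The gap.} The goodness step needs $E\to\P^1_x$ to be the discriminant double cover of $f_\phi$, i.e.\ $k(E)\subseteq k(X_{f_\phi})$, and you never prove this.
\begin{itemize}
\item Your opening claim that ``the Galois closure of $f_\phi$ contains that of $\phi$'' is this very statement. A priori only the opposite inclusion is clear: since $k(C)=k(E)(t)$ with $f_\phi(t)=x$, the Galois closure $X$ of $\phi$ has $k(X)=k(E)\,k(X_{f_\phi})\supseteq k(X_{f_\phi})$.
\item Your consequence $A_4\le G$ does survive, by restricting $\Gal(k(X)/k(E))\simeq A_4$ to the four roots of $f_\phi(T)-x$.
\item Later you flag the identification as ``the delicate point'' and defer to the proof of Proposition \ref{clos}(i). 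That argument runs in the other direction: there the double cover is \emph{defined} as the discriminant curve, and the genus-$2$ curve is built as a fiber product over it. So it cannot show that an abstract $E$ is that curve.
\end{itemize}

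\textbf{The missing idea} is a degree count combined with inertia at infinity, which is the paper's argument.
\begin{itemize}
\item The compositum $k(X)=k(E)\,k(X_{f_\phi})$ is Galois over $k(x)$ of degree $[k(X):k(E)]\,[k(E):k(x)]=24$.
\item Since $f_\phi$ is a polynomial, $G$ contains the $4$-cycle of local monodromy at $t=\infty$. Together with $A_4\le G$ this forces $G=S_4$.
\item Hence $[k(X_{f_\phi}):k(x)]=24$, so $k(X_{f_\phi})=k(X)\supseteq k(E)$.
\item Then $k(E)$ is the fixed field of an index-$2$ subgroup of $S_4$, necessarily $A_4$. So $E$ is the normalization of $y^2=\mathrm{Disc}_X(f_\phi(X)-x)$.
\end{itemize}
The paper phrases this contrapositively: if $k(E)\not\subseteq k(X_{f_\phi})$, then $[k(X_{f_\phi}):k(x)]=12$, so $G=A_4$, which contains no $4$-cycle.

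With this in hand, your case analysis over the bad classes is unnecessary. Since $E$ has genus $1$, the cubic discriminant is squarefree, and $f_\phi$ is good directly by Lemma \ref{crit}. Even within your own structure the gap closes the same way for $X^4+X^3$: there you already know $|G|=24$ and $k(X_{f_\phi})\subseteq k(X)$, so equality of degrees gives the identification. You did not make that step.
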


\begin{proof}
    The map $f_{\phi}:\P^1_t\to\P^1_x$ totally ramifies above $\infty$ (and nowhere else, by the Riemann-Hurwitz formula) with only preimage $\infty$, so it is a quartic polynomial. It now suffices to show that that its Galois closure $X_{f_\phi}$ factors through $E$: in that case, by Remark \ref{inf2}, $X_{f_\phi}$ factors through the Galois closure $X$ of $\phi$; since this cover has degree $24$ over $\P^1_x$ in virtue of our assumption on $\Gal(X/E)$, the two Galois closures must coincide, as $[K(X_{f_\phi}):K(x)]\le|S_4|=24$. Therefore, $K(E)=K(X_{f_\phi})^{A_4}$, which means that $E$ is the discriminant curve of $f_\phi$; as $E$ is smooth, $f$ must then be good by Lemma \ref{crit}. So, let $X$ be the Galois closure of $\phi$. As $K(C)=K(E)(t),$ the function field of $X$ is the splitting field of $f_\phi(t)-x$ over $K(E),$ so $K(X)=K(X_{f_\phi})K(E)$. We know that $[K(X):K(E)]=12,$ so, if $K(E)\not\subset K(X_{f_\phi})$, then $[K(X_{f_\phi}):K(x)]=\frac12[K(X):K(x)]=12$. But $K(X_{f_\phi})/K(x)$ is Galois with Galois group $\triangleleft \ S_4$, so necessarily $A_4$. On the other hand, the Galois group contains the inertia group, so in particular the $4$-cycle given by local monodromy of $f_\phi$ at infinity. Since $A_4$ does not contain a $4$-cycle, we are done.
\end{proof}
We can finally show:
\begin{thm}\label{t1}
    We have $\H_4\simeq\m(A_4)\simeq Y_0(3)$ over $\Q$.
\end{thm}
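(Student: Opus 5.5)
The plan is to prove the two isomorphisms separately. The first, $\H_4\simeq\m(A_4)$, comes from the correspondence between good quartics and $A_4$-covers built in Propositions \ref{cover}, \ref{clos}(i) and Lemma \ref{corr}. The second, $\m(A_4)\simeq Y_0(3)$, comes from the $3$-isogeny $\psi_f:\widetilde E_f\to E_f$ of Proposition \ref{clos}(ii). Everything is constructed Galois-theoretically from data defined over the base field, so the resulting maps are defined over $\Q$.

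For $\H_4\simeq\m(A_4)$, I would use the following two maps.
\begin{enumerate}[(i)]
\item Forward direction. Send the linear class of a good quartic $f$ to the class of the $A_4$-cover $X_f\to E_f$. This cover is branched only above $0_{E_f}$: the only ramification away from $E_f[2]$ would have to come from the $4$-cycle at infinity, and Proposition \ref{clos}(ii) excludes ramification over the other $2$-torsion points.
\item Inverse direction. Given an $A_4$-cover $X\to E$ branched only over the origin, take the quotient by a subgroup $C_3\le A_4$. This gives a degree-$4$ cover $C\to E$. By Riemann--Hurwitz, and because the inertia above $0_E$ has order $2$ (it is the square of the $4$-cycle of $f$), $C$ has genus $2$ with $\phi^{-1}(0_E)$ consisting of two points.
\item Identifying $C$. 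I would check that the involution $-1$ on $E$ lifts to $C$, giving the hyperelliptic involution, so that Lemma \ref{corr} applies and produces a good quartic $f_\phi$. The lift exists because $X\to\P^1_x$ is Galois: the quotient of $E$ by $-1$ extends the cover.
\item Well-definedness. Linear equivalence of $f$ corresponds to isomorphism of covers by \eqref{inv1} and \eqref{inv2}. The two constructions are mutually inverse by Remark \ref{inf2}, which identifies $\c_f$ with the fiber product.
\end{enumerate}

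For $\m(A_4)\simeq Y_0(3)$, I would send $[X\to E]$ to the $3$-isogeny $X/V_4\to E$, possibly dualised, which is a point of $Y_0(3)$. This map is compatible with the projections to $Y(1)$: both go to $j(E)=j_{\mathrm{CV}}$. Both $\m(A_4)$ and $Y_0(3)$ are then covers of $Y(1)$, and I expect both to have degree $4$. For $Y_0(3)$ this is $[\Gamma(1):\Gamma_0(3)]=4$. For $\H_4$, \eqref{jcv} is visibly of degree $4$ in $j$; to keep the argument direct, I would instead count the number of $A_4$-covers over a fixed $E$ by counting generating pairs modulo conjugation by $S_4$.

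To conclude, I would show that the morphism is injective on geometric points: the quartic, or equivalently the $A_4$-cover, can be reconstructed from the isogeny. A cleaner alternative is to use that a morphism of degree-$4$ covers of $Y(1)$ which is compatible with the projections and dominant is an isomorphism, since a finite morphism between smooth curves of the same degree over the base has degree $1$. This requires irreducibility of $\m(A_4)$, which follows from $\H_4$ being irreducible, as stated in ACV I.

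The main obstacle is exactly this injectivity or degree comparison. Concretely, I must show that distinct classes of $A_4$-covers of $E$ yield distinct cyclic $3$-subgroups, i.e.\ that the $V_4$-quotient remembers the cover. I would try a monodromy argument. An $A_4$-cover of $E$ branched only at $0_E$ corresponds to a surjection $\pi_1(E\setminus 0)=F_2\to A_4$ in which the commutator maps to an element of order $2$. Composing with $A_4\to C_3$ gives the $3$-isogeny. I would then count the $\mathrm{Aut}$-classes on both sides, $4$ in each case, and verify that the induced map on classes is a bijection.
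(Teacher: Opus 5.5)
Your plan follows the paper's route. The map $f\mapsto(X_f\to E_f)$ puts $\H_4$ inside $\m(A_4)$. Every $A_4$-cover is shown to come from a quartic through $C=X/C_3$ and Lemma \ref{corr}. Finally $X\mapsto X/V_4$ maps $\m(A_4)$ to $Y_0(3)$ over $Y(1)$, and comparing degrees ($4=4$) concludes.

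Your local variants are sound and in places cleaner. Applying Riemann--Hurwitz directly to $C\to E$ works: a double transposition acts on $A_4/C_3$ as two $2$-cycles, so $g(C)=2$ and $\psi^{-1}(0_E)$ consists of two points of index $2$. This bypasses the paper's detour through $g(X)=4$ and the unramifiedness of $X\to C$. Your Nielsen count also works: there are $96$ generating pairs of $A_4$, permuted freely by $\Aut(A_4)\simeq S_4$, so $4$ classes, and each of the four index-$3$ subgroups of $\Z^2$ is hit exactly once. This makes the degree comparison independent of ACV~I, and by itself forces $\m(A_4)$ to be irreducible.

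The inverse direction, however, rests on circular justifications.
\begin{enumerate}[(a)]
\item In (ii), ``the square of the $4$-cycle of $f$'' presupposes the quartic you are constructing. The right reason is the one in your last paragraph: inertia over $0_E$ is generated by the image of the commutator, which lies in $[A_4,A_4]=V_4$ and is nontrivial because $A_4$ is nonabelian.
\item More seriously, in (iii) you lift $[-1]$ to $C$ ``because $X\to\P^1_x$ is Galois''. For an arbitrary $A_4$-cover $X\to E$, that Galois property is equivalent to $[-1]$ lifting to $X$, which is what you need to prove.
\item You also never check what Lemma \ref{corr} actually requires: that the two points $P,Q$ of $\psi^{-1}(0_E)$ are \emph{swapped} by the hyperelliptic involution $\iota_C$. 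If both were fixed, the induced map $\P^1\to\P^1$ would have two double poles and would not be a polynomial. The paper asserts the swap without comment, but it needs an argument.
\end{enumerate}

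One repair for (b) and (c):
\begin{enumerate}[(1)]
\item Since $g(C)=2$, $\iota_C$ acts as $-1$ on $\jac(C)$, so $\psi\circ\iota_C=[-1]\circ\psi+c$.
\item The left side is branched only over $0_E$ and the right side only over $c$, so $c=0_E$.
\item Hence $\iota_C$ preserves $\{P,Q\}$, and $\psi$ descends to $\bar\psi:\P^1_t\to\P^1_x$.
\item $C$ is then the normalization of the curve $y^2=p(\bar\psi(t))$, where $E:y^2=p(x)$; this curve is irreducible because $\psi$ is nonconstant.
\item If $\iota_C(P)=P$, then $e_{\bar\psi}(\pi_C(P))=4/2=2$. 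So $p\circ\bar\psi$ has even order $-6$ at $\pi_C(P)$, and $\pi_C$ is unramified there. This contradicts $P$ being a Weierstrass point, so $\iota_C$ swaps $P$ and $Q$.
\end{enumerate}
Alternatively, on the monodromy side: conjugation by a suitable transposition inverts both members of any generating pair of $A_4$. So $[-1]$ lifts to $X$, making $X\to\P^1_x$ an $S_4$-cover whose inertia over $\infty$ is cyclic of order $4$, hence generated by a $4$-cycle.
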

\begin{proof}
 Observe that $\H_4$ is naturally a connected component of $\m(A_4)$ by Proposition \ref{clos}(i). We show that it is the only one: any $A_4$-cover $X\to E_f$ branched only above the origin factors through the $C_3$-cover $X/V_4\to E_f$, which is a $3$-isogeny as remarked in the proof of Proposition \ref{clos}(ii). The Galois cover $X\to X/V_4$ branches at most over the kernel of the $3$-isogeny, with all preimages having the same ramification index $e$: a priori, either $1$, $2$ or $4$. In the first case, $X$ would have genus $1$, making $X\to E$ an isogeny; this must have abelian monodromy, contradicting the fact that the Galois group is $A_4$. The third case is ruled out by the Riemann-Hurwitz formula: we would need to have $2\mid3(e-1)$ in the fully branched case. Therefore, $X$ has genus $4$ and $X\to E_f$ has the same ramification as that of $X_f$: a double transposition above each element of the kernel of the $3$-isogeny. In particular, $0_{E_f}$ has six preimages in $X$, all with ramification index $2$.
 
 \noindent
 Set \begin{align*}g:X\to C=X/C_3, \quad \psi: C\to E_f.\end{align*} If $g(C)=1$, then $\psi$ would be unramified, so $0_{E_f}$ would have $4$ preimages in $C(k)$. But then it could not possibly have six in $X(k)$: in that case, at least four of them would have odd ramification index ($1$ or $3$). On the other hand, $$3(2g(C)-2)\le 2g(X)-2=6\Longrightarrow g(C)\le2$$ so $C$ has genus $2$ and $g$ is unramified. Therefore, $\psi^{-1}(0_{E_f})$ consists of two points, which are swapped by the hyperelliptic involution. 
 Choosing a model of $X$---and hence $C$---so that $\psi^{-1}(0_{E_f})=\{\infty^+_C, \ \infty^-_C\}$ (which by definition does not change the class in the reduced Hurwitz space), Lemma \ref{corr} gives us that $C\to E_f$ is the lift of a good quartic. We have shown the first isomorphism.

\noindent
Now, not only $A_4$-covers factor through a $3$-isogeny, but $\m(A_4)\to Y(1)$ factors through $\m(C_3)\simeq Y_0(3)$ by the standard theory of orbifold covers \cite[p.44]{chen}. Since $\H_4\to Y(1)$ has degree $4$ \cite[Proposition 3.4]{nac}, all curves have degree $4$ over the $j$-line and are therefore isomorphic.
\end{proof}

Having exploited the structure of the Galois closure, it is natural to look at the ``completed" diagram---that is, at all the factorizations of $X_f\to\P^1_x$ through $X_f/K$, for some $K\leq S_4$. Of course, we can reason up to conjugation, easing the load on terminology. 

Recall the interpretation of the Galois closure of a degree $d$ cover as the normalization of any component of its $d$-fold self-fibered product with the \textit{fat diagonal} $\Delta$ removed \cite[8.3.2]{RET}. In our case, letting \begin{align}\label{selffp}
    C'_f=\P^1_X\underset{\P^1_x}{\times}\P^1_X\setminus\Delta_f, \quad E'_f=\widetilde{C'_f},
\end{align}$E'_f$ has a degree-$3$ cover to $\P^1_X$ and we have \begin{align*}X_f\simeq (E'_f\underset{\P^1_X}{\times}E'_f)\setminus\Delta_{E'_f}.\end{align*}Therefore, $E'_f\simeq X_f/\langle\tau\rangle$, with $\tau\in S_4$ any transposition. 
The subgroup generated by any double transposition $\langle\{\tau,\tau'\}\rangle$ defines a genus $2$ cover \begin{align*}Y_f=X_f/\langle\{\tau,\tau'\}\rangle\end{align*} of $\widetilde E_f$. $Y_f$ also has a map to $X_f/C_4$ (since the square of a $4$-cycle is a double transposition). Moreover, the extension corresponding to $D_4\le S_4$ is that of the cubic resolvent cover $z\mapsto\mathrm{Res}_f(z)$ associated to $f$.
\begin{reml}
    The $D_4$-extension is also that of the cover in the bottom row of the first diagram of this section for the $3$-isogeny $\psi_f$, so $\tilde{E}_f:y^2=\mathrm{Res}_f(z)$. So, double covers of cubic resolvents of quartics also parametrize $Y_0(3)$, in the same way that their critical elliptic curves do.
\end{reml}

We summarize all this, along with facts we previously worked out, in the following partial table of subgroups of $S_4$ up to conjugacy, along with their respective quotients of $X_f$:
\newpage
\begin{center}
\setlength{\abovecaptionskip}{0.5cm}
\setlength{\belowcaptionskip}{0.5cm}
\captionof{table}{Subgroups of $S_4$ up to conjugacy and relative quotients of $X_f$.}
\begin{tabular}{|l|l|l|c|l|}
\hline
\textbf{Subgroup} & \textbf{Generators} & \textbf{Quotient} & \textbf{Genus} & \textbf{Jacobian isogeny class} \\
\hline
$\{e\}$ 
  & $e$ 
  & $X_f$ & $4$ & ? \\
\hline

$C_2^{\text{t}}$ (transposition)
  & $\langle (12)\rangle$ 
  & $E'_f$ & ? & $?$ \\
\hline

$C_2^{\text{d}}$ (double transposition)
  & $\langle (12)(34)\rangle$ 
  & $Y_f$ & $2$ & ? \\
\hline

$C_3$
  & $\langle (123)\rangle$ 
  & $\c_f$ & $2$ & ? \\
\hline

$C_4$
  & $\langle (1234)\rangle$ 
  & $?$ & $?$ & ? \\
\hline

$C_2\times C_2$ (non-normal Klein) 
  & $\langle (12),(34)\rangle$ 
  & $?$ & $?$ & ? \\
\hline

$V_4\simeq C_2\times C_2$ (normal Klein)
  & $\langle (12)(34),(13)(24)\rangle$ 
  & $\widetilde E_f$ & $1$ & $E_f$ \\
\hline

$S_3$
  & $\langle (12),(123)\rangle$ 
  & $\P^1_X$ & $0$ & trivial \\
\hline

$D_4$
  & $\langle (1234),(13)\rangle$ 
  & $\P^1$ & $0$ & trivial \\
\hline

$A_4$
  & $\langle (123),(12)(34)\rangle$ 
  & $E_f$ & $1$ & $E_f$ \\
\hline

$S_4$
  & $\langle (12),(1234)\rangle$ 
  & $\P^1_x$ & $0$ & trivial \\
\hline
\end{tabular}
\vspace{4mm}

\label{tab1}
    
\end{center}

In the next section we will use the theory of \textit{Brauer relations} to complete the table and recover information about the Jacobians of the curves that appear in it.

\section{Global arithmetic of the critical families}\label{4}
\subsection{Isogenies and Brauer relations}

A \textit{Brauer relation} for a finite group $G$ is an equality of $\Z^+$-linear combinations 
of transitive $G$-sets\begin{align}\label{br}
    \sum_i a_i [G/H_i] = \sum_j b_j [G/K_j]
\end{align}in the Burnside ring $B(G)$---that is, an isomorphism$$\bigoplus_i \mathbb{Q}[G/H_i]^{a_i} \cong \bigoplus_j \mathbb{Q}[G/K_j]^{b_j}$$of $\Q[G]$-modules between the induced
permutation representations.

Kani-Rosen theory \cite[Theorem 1.5]{dok} tells us that, if $\Aut_K(X)=G$, then \eqref{br} induces an isogeny $$\bigoplus_i\jac(X/H_i)^{a_i}\sim\bigoplus_i\jac(X/K_j)^{b_j}$$defined over $K$. Let $B(S_4)$ be the vector space of Brauer relations for $S_4$ and let $\Theta=(\Theta_1,...,\Theta_6)$ be its basis described in \cite{dok2regcon}; setting $R=\Theta M$ for the matrix $$M=
\begin{pmatrix}
-2 & 0 & 0 & -4 & 2 & 0 \\
0 & 2 & 1 & 2 & 0 & -1 \\
0 & 0 & 1 & 0 & 2 & 1 \\
0 & 0 & 0 & 0 & 1 & 1 \\
0 & 0 & 0 & -2 & 1 & 0 \\
-1 & -1 & 0 & -1 & 1 & 0
\end{pmatrix}$$of determinant $8$, we find that the following six elements form a basis for $B(S_4)$:
\begin{align*}
&R_1: [A_4] + 2[D_4] = 2[S_4] + [V_4] \\
&R_2: 3[A_4] + 2[C_2^t] = [V_4] + 2[S_3] + 2[A_3] \\
&R_3: [A_4] + [C_2^d] = [V_4] + [A_3] \\
&R_4: 3[A_4] + 2[S_3] + 2[C_4] = 4[S_4] + [V_4] + 2[A_3] \\
&R_5: [\{e\}] + 2[S_4] = [C_2] + [C_3] + [C_4] \\
&R_6: [\{e\}] + 2[C_2^2] = 2[C_2^{\text{t}}] + [C_2^{\text{d}}] 
\end{align*}
\noindent
$R_1$ gives $E_f\sim\widetilde E_f$, which we already knew; 
$R_2$ gives \begin{align}\label{jacc}
    E^3_f\times (\jac(E'_f))^2\sim \widetilde E_f\times\jac(\c_f)^2\Longrightarrow\jac(\c_f)\sim E_f\times \jac(E'_f),
\end{align}
so, in particular, $E'_f$ has genus $1$.
$R_3$ gives \begin{align*}
    E_f\times\jac(Y_f)\sim\widetilde E_f\times\jac(\c_f),
\end{align*}so $\jac(Y_f)\sim\jac(\c_f)\sim E_f\times \jac(E'_f)$. $R_4$ gives \begin{align*}
    E_f^3\times\jac(X_f/C_4)^2\sim \widetilde E_f\times\jac(\c_f)^2,
\end{align*}so $\jac(X_f/C_4)\sim \jac(E'_f)$. $R_5$ then gives \begin{align}
    \jac(X_f)\sim E_f\times\jac(E'_f)^3\label{g4decompose}
\end{align}and finally $R_6$ gives $g(X_f/C_2^2)=0$. This allows us to complete our table 1:

\begin{center}
\setlength{\abovecaptionskip}{0.5cm}
\setlength{\belowcaptionskip}{0.5cm}
\captionof{table}{Subgroups of $S_4$ up to conjugacy and relative quotients of $X_f$.}
\begin{tabular}{|l|l|l|c|l|}
\hline
\textbf{Subgroup} & \textbf{Generators} & \textbf{Quotient} & \textbf{Genus} & \textbf{Jacobian isogeny class} \\
\hline
$\{e\}$ 
  & $e$ 
  & $X_f$ & $4$ & $E_f\times \jac(E'_f)^3$ \\
\hline

$C_2^{\text{t}}$ (transposition)
  & $\langle (12)\rangle$ 
  & $E'_f$ & $1$ & $\jac(E'_f)$ \\
\hline

$C_2^{\text{d}}$ (double transposition)
  & $\langle (12)(34)\rangle$ 
  & $Y_f$ & $2$ & $E_f\times \jac(E'_f)$ \\
\hline

$C_3$
  & $\langle (123)\rangle$ 
  & $\c_f$ & $2$ & $E_f\times \jac(E'_f)$\\
\hline

$C_4$
  & $\langle (1234)\rangle$ 
  & $X_f/C_4$ & $1$ & $\jac(E'_f)$ \\
\hline

$C_2\times C_2$ (non-normal Klein) 
  & $\langle (12),(34)\rangle$ 
  & $\P^1$ & $0$ & trivial \\
\hline

$V_4\simeq C_2\times C_2$ (normal Klein) 
  & $\langle (12)(34),(13)(24)\rangle$ 
  & $\widetilde E_f$ & $1$ & $E_f$ \\
\hline

$S_3$
  & $\langle (12),(123)\rangle$ 
  & $\P^1_X$ & $0$ & trivial \\
\hline

$D_4$
  & $\langle (1234),(13)\rangle$ 
  & $\P^1$ & $0$ & trivial \\
\hline

$A_4$
  & $\langle (123),(12)(34)\rangle$ 
  & $E_f$ & $1$ & $E_f$ \\
\hline

$S_4$
  & $\langle (12),(1234)\rangle$ 
  & $\P^1_x$ & $0$ & trivial \\
\hline
\end{tabular}
\vspace{4mm}

\label{tab1}
    
\end{center}

In particular, this implies that $\jac(X_f)$ is isogenous to the product of $\jac(\c_f)$ with the abelian surface $(E'_f)^2$, and we have \begin{align}
   L(\jac(X_f),s)=L(E_f,s)L(E'_f,s)^3=L(\jac(\c_f),s)L(E'_f,s)^2.
\end{align}

Let us look at the isogeny $\jac(\c_f)\sim E_f\times \jac(E'_f)$ in more detail. It is customary to refer to a cover $\phi:C\to E$ of an elliptic curve as \textit{optimal} if, for any factorization $C\to E_1\to E$ of $\phi$ with $E_1$ an elliptic curve, $E_1\to E$ has degree $1$. It is well-known that, in characteristic zero, an optimal genus-$2$ cover $C\to E$ of an elliptic curve induces a complementary (optimal) cover $C\to E'$ of the same degree, call it $d$, and a splitting $\jac(C)\to E\times E'$ which is a $(d,d)-$isogeny, see \citep{BD}.

If $f$ is a good quartic, then $E_f$ is an elliptic curve by Lemma \ref{crit}. $A_4$ has no index $2$ subgroup and $\deg\phi_f=4$, so $\phi_f$ is optimal; it therefore induces a degree-$4$ cover from $\c_f$ to another elliptic curve $D_f$ over $K$, such that \begin{align}\label{prym}
    0\to D_f\to\jac(\c_f)\to E_f\to 0
\end{align}is exact \cite[\S 2, Lemma]{kuhn}. We then have that $E_f\times D_f$ is $(4,4)-$isogenous over $K$ to $\jac(\c_f)$, so $D_f$ is isogenous to $\jac(E'_f)$ by \eqref{jacc}. In \cite[Lemma 3.1]{GLNZ}, together with Gallese, Lombardo and Zannier we show how $D_f$ is isomorphic over $K$ to the curve $E'_f$ of \eqref{selffp}.
In light of this, we will from now on refer to the uniquely determined Prym elliptic curve satisfying \eqref{prym} as $E'_f$.
\begin{lem}\label{4-iso}
    There is an anti-isometry between $E_f[4]$ and $E'_f[4]$, i.e$.$ a $G_{K}$-equivariant isomorphism $\iota:E_f[4]\simeq E'_f[4]$ that inverts the Weil pairing. 
\end{lem}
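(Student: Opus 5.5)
The plan is to use the standard structure of optimal genus-$2$ covers of elliptic curves, as in \citep{BD} and \cite{kuhn}. Since $\phi_f:\c_f\to E_f$ is optimal of degree $4$, we have the exact sequence \eqref{prym}. Dually, $E_f\hookrightarrow\jac(\c_f)$ via $\phi_f^*$, and $E'_f=D_f$ is the complementary abelian subvariety. The addition map
$$\mu: E_f\times E'_f\longrightarrow \jac(\c_f)$$
is then a $K$-rational isogeny whose kernel is the graph of a group isomorphism $\iota: E_f[4]\to E'_f[4]$. The argument has three steps.

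\textbf{Step 1 (the kernel is a graph).} Since $\phi_{f*}\circ\phi_f^*=[4]$ on $E_f$, the restriction of the principal polarization $\lambda$ of $\jac(\c_f)$ to $E_f$ is $4$ times the principal polarization. The same holds on $E'_f$ by the symmetry of the complementary construction; this is the standard fact that complementary subvarieties of a principally polarized abelian surface have the same exponent. Hence
$$E_f\cap E'_f=\ker\bigl(E_f\to \widehat{E'_f}\bigr)\subseteq E_f[4],$$
and a degree count gives $\#(E_f\cap E'_f)=16$, so $E_f\cap E'_f=E_f[4]=E'_f[4]$ inside $\jac(\c_f)$. Therefore $\ker\mu=\{(P,-P):P\in E_f\cap E'_f\}$, and we define $\iota(P)=P$, viewing $P\in E_f[4]$ as a point of $E'_f[4]$ through the two embeddings. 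If one prefers to avoid the sign, take $\iota=-\mathrm{id}$ on the intersection instead; this changes nothing below. Because all maps are defined over $K$, $\iota$ is $G_K$-equivariant.

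\textbf{Step 2 (the Weil pairing is inverted).} The pullback $\mu^*\lambda$ equals $4(\lambda_{E_f}\times\lambda_{E'_f})$, since $\mu^*\lambda$ restricts to $4\lambda_{E_f}$ and $4\lambda_{E'_f}$ on the factors and the cross terms vanish by complementarity. Thus $\ker\mu$ is a maximal isotropic subgroup of $(E_f\times E'_f)[4]$ for the product Weil pairing $e_4^{E_f}\cdot e_4^{E'_f}$. This is the usual descent criterion for polarizations through isogenies: the kernel must be Lagrangian for $e^{\mu^*\lambda}$. Isotropy of the graph of $\iota$ gives
$$e_4^{E_f}(P,Q)\,e_4^{E'_f}(\iota P,\iota Q)=1,$$
that is, $e_4^{E'_f}(\iota P,\iota Q)=e_4^{E_f}(P,Q)^{-1}$. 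The sign ambiguity $\pm\iota$ does not affect this, since the pairing is bilinear in both arguments.

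\textbf{Main obstacle.} The main difficulty I expect is Step 1, specifically justifying cleanly that $E_f\cap E'_f$ is all of $E_f[4]$ rather than a proper subgroup. This rests on optimality, which the paper has already established because $A_4$ has no index-$2$ subgroup. I would cite Kuhn's lemma and Frey–Kani for the statement that for an optimal degree-$n$ cover, $\jac(C)\cong(E\times E')/\Gamma_\iota$ with $\Gamma_\iota$ the graph of an anti-isometry $E[n]\to E'[n]$. If needed, I would reprove it via the degree count $\deg\mu=\deg(\mu^*\lambda)/\deg\lambda=16^2/16^{1}$ adjusted appropriately, which forces $\#\ker\mu=16$.
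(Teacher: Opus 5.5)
Your argument is correct and is essentially the paper's own route: the paper gives no separate proof. It treats the lemma as the standard Frey--Kani consequence (via \cite{kuhn} and \citep{BD}) of the optimal degree-$4$ cover $\phi_f$ and the resulting $(4,4)$-isogeny $E_f\times E'_f\to\jac(\c_f)$, which is exactly what you unpack. One small slip: in Step 1 the relevant map is the restricted polarization $E_f\to\widehat{E_f}$, since the composite to $\widehat{E'_f}$ is zero because complementary subvarieties are orthogonal. In fact $\phi_f^*(E_f)\cap E'_f=\phi_f^*(\ker \phi_{f*}\phi_f^*)=\phi_f^*(E_f[4])$ directly, so your ``main obstacle'' disappears, as does the garbled degree formula (it should read $\deg(\mu^*\lambda)=(\deg\mu)^2$).
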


In \citep{BD}, Bruin and Doerksen study genus $2$ curves with $(4,4)$-split Jacobians. In Section 4 they show how such a curve admits a model $Y^2=F_1(X)F_2(X)F_3(X)$ with $\{F_1, F_2, F_3\}$ Galois-stable, a so-called quadratic splitting. Notice how in our case such quadratic splitting is given (up to sign) by the factors in \eqref{qsplit}. If the quadratic splitting is \textit{nonsingular}, they show that the $(4,4)$-isogeny $E_1\times E_2\to\jac(C)$ must factor through a $(2,2)$-isogeny to the Jacobian of a genus $2$ curve, which in our case can be taken as $Y_f$ in light of the previous table. In \citep{kos}, Kostantinou defines \textit{pseudo Brauer relations} and shows that many classes of well-known isogenies are induced by a generalization of Brauer relations. In particular, combining (2) and (5) in \cite[Theorem 1.2]{kos} with the paragraph after \cite[Lemma 4.1]{kos} we see how $(4,4)$-isogenies of product of elliptic curves to genus-$2$ Jacobians admitting a nonsingular quadratic splitting are known to be pseudo Brauer verifiable. In our case where $\c_f$ is a precritical genus $2$ curve and $E_f$ is the relative critical elliptic curve, we have just shown that the $(4,4)$-isogeny is also Brauer verifiable. Moreover, we have shown:
\begin{prop}
    Let $f\in K[X]$ be a good quartic with critical elliptic curve $E_f$ and let $X_f\to\P^1_x$ be the Galois closure of $f(X)-x$. The isogeny $\jac(X_f)\sim E_f\times(E'_f)^3$ is Brauer verifiable.
\end{prop}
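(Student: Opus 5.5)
The plan is to assemble the isogeny from the Brauer relations $R_1,\dots,R_6$ above and check that every step is an honest Kani--Rosen application. ``Brauer verifiable'' means the isogeny is induced by a single Brauer relation via \cite[Theorem 1.5]{dok}. So I will produce one explicit element of $B(S_4)$ whose Kani--Rosen isogeny is $\jac(X_f)\times(\text{trivial})\sim E_f\times\jac(E'_f)^3\times(\text{trivial})$, using the identifications of the quotients in Table \ref{tab1}.

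First I take $G=\Gal(X_f/\P^1_x)\simeq S_4$ acting on $X_f$ over $K$. Here I must be careful: Kani--Rosen needs $G$ to act by $K$-automorphisms, and the geometric Galois group of $f(X)-x$ need not be defined over $K$ (the arithmetic monodromy may be larger). Since $f$ is good, the geometric monodromy is $S_4$, as established in Proposition \ref{clos}. I would either work over the field of definition of the $G$-action, or note that each quotient $X_f/H$ for $H$ up to conjugacy is $K$-rational: it is built from fibre products of $\P^1_X$, $E_f$, $E'_f$, $\c_f$ over $\P^1_x$. In the latter case the relation descends, as in \cite{dok}. This descent is the point I expect to be the main obstacle. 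My first approach is to argue that the geometric and arithmetic monodromy coincide: $f\in K[X]$ is totally ramified at $\infty$ with a $K$-rational point above it, so the constant field extension in the Galois closure is trivial. That gives $\Aut_K(X_f)\supseteq S_4$.

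Next I form the combination $R:=R_5+R_4-R_2+\dots$ in $B(S_4)$ that eliminates every term except $[\{e\}]$, $[A_4]$ and $[C_2^t]$ together with genus-$0$ quotients. Concretely, I substitute $R_1$ for $\widetilde E_f$, $R_2$ for $\c_f$ ($[C_3]=[A_3]$), $R_3$ for $Y_f$, and $R_4$ for $X_f/C_4$ into $R_5$, allowing rational coefficients and then clearing denominators. The result is a relation of the form
\[
n[\{e\}]+(\text{genus-}0\text{ terms})=n[A_4]+3n[C_2^t]+(\text{genus-}0\text{ terms})
\]
for some $n\ge1$, up to adding $\Z$-multiples of $R_6$, whose extra terms have genus zero.

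Finally I apply Kani--Rosen to this single relation. Quotients by $S_4$, $S_3$, $D_4$ and the non-normal $C_2^2$ have genus $0$ and contribute nothing. $X_f/A_4=E_f$ and $X_f/C_2^t=E'_f$, where the latter has genus $1$ by \eqref{jacc}. This yields $\jac(X_f)^n\sim E_f^n\times\jac(E'_f)^{3n}$, and Poincaré reducibility lets me cancel the exponent $n$. This recovers \eqref{g4decompose} as induced by one Brauer relation, which is the required Brauer verifiability. The linear algebra itself is routine: since the six $R_i$ span $B(S_4)\otimes\Q$, the required combination exists, and I would just record it explicitly.
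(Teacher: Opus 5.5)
Your strategy is essentially the paper's. The paper applies Kani--Rosen to $R_5$ and then substitutes the isogenies already extracted from $R_1$, $R_2$ and $R_4$; you add the relations first and apply Kani--Rosen once, which is a cleaner witness of Brauer verifiability.

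The elimination you describe gives $R_1-2R_2+R_4+2R_5$, which is exactly twice the relation
\[
[\{e\}]+3[S_3]+[D_4]=[A_4]+3[C_2^{\mathrm t}]+[S_4].
\]
Equally valid is $[\{e\}]+2[S_3]+[C_2\times C_2]=[A_4]+3[C_2^{\mathrm t}]$. So $n=1$ already works and no cancellation is needed. Moreover, $g(E'_f)=1$ falls out of the genus count $4=1+3g(E'_f)$.

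Three small corrections to the bookkeeping:
\begin{itemize}
\item Spanning $B(S_4)\otimes\Q$ is not by itself why such a relation exists. What is needed is that $\chi_{\{e\}}-\chi_{A_4}-3\chi_{C_2^{\mathrm t}}$ lies in the span of the genus-$0$ permutation characters, and the explicit relation exhibits this.
\item Your aside on $R_6$ is false: it contains $[C_2^{\mathrm d}]$, whose quotient $Y_f$ has genus $2$. You never need $R_6$, though.
\item Substituting $R_3$ is unnecessary, since $Y_f$ does not occur in $R_5$.
\end{itemize}

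The one step that fails as written is your justification of $S_4\le\Aut_K(X_f)$. Total ramification at $\infty$ over a $K$-rational point does not force the Galois closure to have constant field $K$. For $f=X^4$ over $\Q$, the splitting field of $f(X)-x$ is $\Q(i)(x^{1/4})$, whose constant field is $\Q(i)$, even though $\infty\in\P^1_X(\Q)$ is totally ramified. The points of $X_f$ above $\infty$ need not be $K$-rational.

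The correct argument is group-theoretic. Let $\Omega$ be the splitting field of $f(X)-x$ over $K(x)$. The arithmetic monodromy group $\Gal(\Omega/K(x))$ acts faithfully on the four roots, so it embeds in $S_4$. It also contains the geometric monodromy group, which is already $S_4$ because a good quartic is Morse. Hence the two groups coincide and $\Omega\cap\overline K=K$. It follows that $X_f$ is geometrically irreducible over $K$ and $S_4$ acts on it by $K$-automorphisms, as Kani--Rosen requires. The paper itself takes this for granted when it says all objects are defined over $K$. With this replacement your proof is complete.
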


\subsection{The Prym curve}
We now work out a Weierstrass model for our Prym elliptic curve $E'_f$.
\begin{lem}\label{prymdata}
    Let $f\in K[X]$ be a good quartic with $f'(X)=4(X^3+AX+B)$ and let $j=j(Z_f)$. Then $E'_f$ has $j$-invariant \begin{align}\label{jprym}
        j(E'_f)=\frac{j^2}{4(j-1728)}
    \end{align}and the Weierstrass models $y^2=x^3-Ax^2-B^2$ and $y^2=\prod_{X_i\in Z_f}(x-X_i^2)$.
\end{lem}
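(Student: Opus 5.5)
We work directly from the description \eqref{selffp} of $E'_f$ as the normalization of the curve $C'_f$. Write $f(X)=X^4+2AX^2+4BX$, as normalized in \S\ref{2}, so that $f'(X)=4(X^3+AX+B)$. Removing the diagonal from $\P^1_X\times_{\P^1_x}\P^1_X$ means taking the affine curve
$$\frac{f(X)-f(Y)}{X-Y}=X^3+X^2Y+XY^2+Y^3+2A(X+Y)+4B=0 .$$
This equation is symmetric, so we pass to $s=X+Y$ and $p=XY$. Then $X^3+Y^3+X^2Y+XY^2=s^3-2sp$, and the equation becomes linear in $p$:
$$p=\frac{s^3+2As+4B}{2s}.$$
The pair $(X,Y)$ is recovered from $s$, $p$ and $w=X-Y$, using $w^2=s^2-4p$. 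This gives $w^2 s=-(s^3+4As+8B)$. Setting $y=sw$, a birational model of $C'_f$ (and hence of $E'_f$) is the quartic
$$y^2=-s\,(s^3+4As+8B).$$
It has a rational root at $s=0$, which confirms that the genus is $1$, consistent with Table 2.

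To reach a Weierstrass form, we send the root $s=0$ to infinity. We substitute $s=c/x$ and $y=c^2Y/x^2$, which gives $Y^2=-1-4Ax^2/c^2-8Bx^3/c^3$. Choosing $c=-2B$ and then rescaling $y=BY$ yields $y^2=x^3-Ax^2-B^2$. All of this is defined over $K$.

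We must check that $B\neq0$. If $B=0$, the critical points are $0,\pm\sqrt{-A}$; together with $\infty$ they form a harmonic quadruple, so $j(Z_f)=1728$ and $f$ is bad. This case is excluded by hypothesis.

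For the second model, the symmetric functions of the squares $X_i^2$ of the roots of $X^3+AX+B$ are $e_1=-2A$, $e_2=A^2$, $e_3=B^2$. Hence
$$\prod_{X_i\in Z_f}(x-X_i^2)=x^3+2Ax^2+A^2x-B^2 .$$
The translation $x\mapsto x-A$ turns this into $x^3-Ax^2-B^2$, as a short expansion confirms, so the two models are isomorphic over $K$.

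Finally, we compute the $j$-invariant from $a_2=-A$, $a_6=-B^2$. This gives $c_4=16A^2$ and $\Delta=-16B^2(4A^3+27B^2)$, so
$$j(E'_f)=-\frac{256A^6}{B^2(4A^3+27B^2)}.$$
On the other side, $j=j(Z_f)=6912A^3/D$ with $D=4A^3+27B^2$. This holds because the quadruple $Z_f\cup\{\infty\}$ has the $j$-invariant of $y^2=X^3+AX+B$. From it, $j-1728=-46656B^2/D$. Substituting into $j^2/(4(j-1728))$ and using $6912^2/(4\cdot46656)=256$ gives \eqref{jprym}.

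The main point needing care is identifying the Prym curve of \eqref{prym} with the normalization of $C'_f$. This is supplied by \cite[Lemma 3.1]{GLNZ} together with the Brauer-relation table, so the rest is elimination and bookkeeping. One should also note that the birational maps above extend to the smooth completions, so the normalization is exactly the resulting Weierstrass curve.
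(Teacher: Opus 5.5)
Your proposal is correct and is essentially the paper's proof. It uses the same off-diagonal equation and the same quartic model $y^2=-s(s^3+4As+8B)$; your detour through $p=XY$ lands exactly where the paper's substitution $v=X+Y$, $w=X-Y$, $W=vw$ does. From there you use the same substitution $s=-2B/x$, the same symmetric-function translation $x\mapsto x-A$, and the same Tate-formula computation of $j$.

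Your explicit check that $B\neq0$ for good quartics fills a point the paper leaves implicit. One small correction: genus $1$ comes from the quartic being squarefree, which uses $B\neq0$ and $4A^3+27B^2\neq0$. The root at $s=0$ only supplies the $K$-rational point you send to infinity.
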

\begin{proof}
    The off-diagonal self-fiber product is \begin{align}\label{sfp}
        \frac{f(X)-f(Y)}{X-Y} = X^3+X^2Y+XY^2+Y^3+2A(X+Y)+4B = 0. 
    \end{align}Let $v=X+Y, \ w=X-Y$; then \eqref{sfp} becomes $$v^3+vw^2+4Av+8B=0.$$ Letting $W=vw$, the normalization is \begin{align}\label{4model}
        W^2=-v(v^3+4Av+8B),
    \end{align}so setting $x=-\frac{2b}v,\ y=\frac{bW}{v^2}$ gives the Weierstrass model $$E'_f:y^2=x^3-Ax^2-B^2.$$
    Since $\prod_{X_i\in Z_f}X_i^2=B^2$, $-\sum_{X_i\in Z_f}X_i^2=-(\sum X_i)^2+2\sum_{i<j} X_iX_j=2A$ and $\sum_{i<j} X_i^2X_j^2=A^2$, the substitution $x\mapsto x+A$ sends it to the other model above.
    Using the standard formula \cite[\S 1.2]{tate} for the $j$-invariant we get \begin{align}j(E'_f)=\frac{2^{12}A^6}{\Delta(E'_f)}=-\frac{2^{12}A^6}{16B^2(4A^3+27B^2)}=-\frac{j^2(4A^3+27B^2)}{(432B)^2}=-\frac{j^2}{4(1728-j)}.\label{j'(A,B)}\end{align}
\end{proof}

\begin{reml}
    In \cite[\S 5.4]{kumar}, Kumar gives a plane model of the Humbert surface of discriminant $16$ in terms of the $j$-invariants of the two elliptic curves to the product of which the parametrized genus-$2$ Jacobian is $(4,4)$-isogenous. Our pair $(j(E_f), \ j(E'_f))$, as a function of $j=j(Z_f)$, describes a curve in Kumar's $(r,s)$-plane. Using \eqref{jprym} and \cite[Lemma 4.3]{nac} one can compute, after substituting $$t=\frac{j}{256}, \quad s=\frac{2016-t}{256}, \quad
    r=-\frac{9(t-2016)^2}{4096(t-1728)},$$ that this curve is $9s^2-16rs+18r=0$.
\end{reml}

Say that we wanted to make the $4$-congruence $\iota$ of Lemma \ref{4-iso} more explicit. It is easier to see that the two curves admit a $2$-congruence, as both are $2$-congruent to the critical point curve $\e_f:v^2=\prod_{X_i\in Z_f}(u-X_i)$. Indeed, we have $E_f[2]=\{\O,(-f(X_i),0)\}$ by Lemma \ref{crit} and $E'_f[2]=\{\O,(X_i^2,0)\}$ by Lemma \ref{prymdata}. The restriction of $\iota$ on $2$-torsions is generically compatible with these $2$-congruences:
\begin{lem}\label{compatibility}
    Let $f\in K[X]$ be a good quartic such that $\Gal(L/K)\simeq S_3$, where $L$ is the splitting field of $f'$. Then, the only $2$ congruence between $E'_f$ and $E_f$ sends $X_i^2$ to $-f(X_i)$
\end{lem}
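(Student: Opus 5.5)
A $2$-congruence here means a $G_K$-equivariant group isomorphism $E'_f[2]\to E_f[2]$. Both groups are $\{\O\}\cup\{\text{three points}\}$, and any bijection of the nonzero points extends uniquely to a group isomorphism, since the three nonzero points of a $2$-torsion group sum to zero. So a $2$-congruence is the same thing as a $G_K$-equivariant bijection between the sets $\{(X_i^2,0)\}$ and $\{(-f(X_i),0)\}$. By Lemma \ref{prymdata} the first set is $E'_f[2]\setminus\{\O\}$. By Lemma \ref{crit} the second is $E_f[2]\setminus\{\O\}$; the sign comes from the model \eqref{model} and is harmless. The plan is therefore to show that the map $X_i^2\mapsto -f(X_i)$ is a $G_K$-equivariant bijection, and that it is the only one.

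\textbf{Step 1: the two Galois actions.} Both labellings are $G_K$-equivariant images of the labelling $X_i$ of $Z_f$: if $\sigma(X_i)=X_{\sigma(i)}$, then $\sigma(X_i^2)=X_{\sigma(i)}^2$ and $\sigma(-f(X_i))=-f(X_{\sigma(i)})$, because $f\in K[X]$.

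\textbf{Step 2: the labellings are honest bijections.} The $X_i^2$ must be pairwise distinct, since $E'_f$ is smooth for good $f$ by Lemma \ref{prymdata}, its discriminant being a multiple of $B^2(4A^3+27B^2)$. The values $f(X_i)$ are pairwise distinct because $f$ is Morse, which holds for good quartics by Lemma \ref{crit}. Hence $X_i^2\mapsto -f(X_i)$ is a well-defined bijection, and it is equivariant by Step 1.

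\textbf{Step 3: uniqueness.} Any other equivariant bijection differs from this one by a permutation $\pi\in S_3$ of the index set that commutes with the image of $G_K$ in $S_3$. By hypothesis this image is all of $\Gal(L/K)\simeq S_3$, whose centre is trivial, so $\pi=\mathrm{id}$.

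The only point needing care is the smoothness claim in Step 2. For good $f$ we have $j=j(Z_f)\neq1728,\infty$, so $4A^3+27B^2\neq0$ and the roots of $f'$ are distinct. It remains to rule out $B=0$. In that case $f'$ has roots $0,\pm\sqrt{-A}$, and the points $-\sqrt{-A},0,\sqrt{-A}$ form an arithmetic progression, so $j=1728$ and $f$ is bad. Thus $B\neq0$ and $E'_f$ is smooth.

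Without the $S_3$ hypothesis, uniqueness fails whenever the centralizer of the Galois image in $S_3$ is nontrivial, which explains why the hypothesis is there.
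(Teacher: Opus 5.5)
Your proposal is correct and follows essentially the paper's argument: a $2$-congruence is determined by a permutation of the labels $X_i$ that must commute with the Galois image, and that image is all of $S_3$, whose centre is trivial. You also check existence (equivariance of $X_i^2\mapsto -f(X_i)$) and well-definedness ($B\neq 0$ and the Morse property) directly, which the paper leaves implicit by starting from the restriction $\tilde\iota$ of the $4$-congruence of Lemma \ref{4-iso}.
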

\begin{proof}
    Let $\tilde\iota$ be the restriction of our $4$-congruence on $2$-torsions and let $L$ be the splitting field of $f'$ over $K$. As an isomorphism of Galois modules, $\tilde\iota$ maps identity to identity. On the $x$-coordinates of points of order $2$, say that $\tilde\iota(X_i^2)=-f(\tau(X_i))$ for some $\tau\in S_3$. By Galois compatibility, for any $\sigma\in G_K$ we have $$-f(\tau(^\sigma X_i^2))=\tilde\iota(^\sigma X_i)=-f(^\sigma\tau(X_i)),$$so $\tau\in Z(\mathrm{Gal}(L/K))=Z(S_3)=\{\mathrm{id}\}$.
\end{proof}
A different way to see that this $2$-congruence extends to a $4$-congruence is as follows: recall that, under our normalization, $f(X)=(\psi_{3,\e_f}(X)+A^2)/3=\frac14Xf'(X)+AX^2+3BX$. Therefore,\begin{align}
    f(X_j)-f(X_i)=A(X_j^2-X_i^2)+3B(X_j-X_i)=(X_j-X_i)(-AX_k+3B)=\notag\\=(X_j-X_i)(X_k^3-4X_iX_jX_k)=(X_j-X_i)X_k((X_i+X_j)^2-4X_iX_j)=(X_i^2-X_j^2)(X_i-X_j)^2.\label{alg.id}
\end{align}For a curve $C:y^2=\prod_{i=1}^3(x-e_i)$, the halves of $T_i=(e_i,0)$ are the points with $x=e_i+uv$ and $y=\pm uv(u+v)$, where $u$ and $v$ satisfy $u^2=e_i-e_j$ and $v^2=e_i-e_k$. Choose square roots $\delta_{ij}$ of $X_i^2-X_j^2$ and $\eta_{ij}$ of $-f(X_i)+f(X_j)$, and observe that $\eta_{ij}=\pm(X_i-X_j)\delta_{ij}$ by \eqref{alg.id}. Hence, the map $\pm\delta_{ij}\mapsto\pm(X_i-X_j)\delta{ij}$ is $G_K$-equivariant and sends each half of $(X_i^2,0)$ to a half of $(-f(X_i),0)$. Notice that there is an ambiguity of sign, so we can have two possible lifts to the $4$-torsions. Understanding which one $\iota$ is will be relevant in \S 5. Moreover, the previous argument shows that for any $2$-congruence $X_i^2\mapsto-f(\tau(X_i))$, the condition $$\frac{f(\tau(X_j))-f(\tau(X_i)))}{X_i^2-X_j^2}\in (K^\times)^2$$is equivalent to lifting to a $4$-congruence; so, for $\tau\neq\mathrm{id}$, one would need $\frac{X_{\tau(i)}^2-X_{\tau(j)}^2}{X_i^2-X_j^2}$ to be a square for all $i\neq j$.
\begin{reml}\label{Q-comp}
    This strengthens Lemma \ref{compatibility}, for example extending it to all good quartics over a totally real number field such that $\e_f[2](K)=\{\O\}$. Indeed, under the notation of the lemma, in this case we have $\Gal(L/K)\simeq A_3$, so $\tau$ is a $3$-cycle, say $i\mapsto i+1\pmod3$ without loss of generality. Thus, we would have a solution $(X_1,X_2,X_3,U,V)\in K^5$ to $$X_{1}^2-X_3^2=U^2(X_3^2-X_2^2), \quad X_3^2-X_2^2=V^2(X_2^2-X_1^2)$$ with the first three entries pairwise distinct, implying $U,V\neq 0$ since $-1$ is not a square. Substituting the second equation in the first then shows that $X_1^2$ is strictly contained between the other two squares, contradicting both equations.
\end{reml}

Observe that, even if our family of derivatives $\{4(X^3+AX+B)\}$ is a two-parameter one, the (hyper)elliptic fibrations $$E_f:y^2=\mathrm{Disc}_X(f(X)-x), \quad \c_f:-Y^2=h_f(X), \quad E'_f:y^2=x^3-Ax^2-B^2,$$ when taken up to $K$-isomorphism, are one-parameter families over the $j$-line for $j=j(Z_f)$ (punctured at $j=0,1728,\infty$): for $\e_f$, one observes that, generically, good quartics with $j(Z_f)=j$ are linearly equivalent over $K$, since a $\overline\Q$-automorphism of the critical point curve $\e_f$ preserves the $2$-torsion as a Galois module. This fails for $j=0,1728$, where one may need to go over a cubic extension because of the extra automorphisms (see the paragraph before \cite[Lemma 4.1]{nac}). For $E_f$, linear changes of variable twist it by a fourth power over $K$ in virtue of the properties of the discriminant, thus preserving its $K$-isomorphism class. They also preserve the $K$-isomorphism class of $\c_f$: letting $g(X)=f(cX+d)=:f\circ\alpha(X)$, we have $$f(X)-x_i=(X-X_i)^2(X-X_{i,1})(X-X_{i,2})$$ with $x_i$ the critical values of $f$, and hence of $g$. Therefore, \begin{align*}g(X)-x_i= f\circ\alpha(X)-x_i=(\alpha(X)-X_i)^2(\alpha(X)-X_{i,1})(\alpha(X)-X_{i,2})=\\=c^{4}\left(X-c^{-1}(X_i-d)\right)^2\left(X-c^{-1}(X_{i,1}-d)\right)\left(X-c^{-1}(X_{i,2}-d)\right),
\end{align*}so, by \eqref{qsplit}, the curve $\c_g$ is isomorphic to $\c_f$ via $(X,Y)\mapsto(cX+d,c^3Y)$. Hence, linear changes of variable must preserve the $K$-isomorphism class of $E'_f$ as well, in light of \eqref{prym}. Linear rescalings twist the critical elliptic curve by the sixth power of the linear coefficient, thus preserving its $K$-isomorphism class, and clearly leave the precritical genus $2$ curve unchanged. 

In virtue of the previous paragraph, let us from now on interchange freely, when $j\neq0$, between the notation with subscript $f$ to that with subscript $j$ for our families of curves and their maps. Set\begin{align*}
    u:=\frac{A^3}{B^2}=\frac{27j}{4(1728-j)}
\end{align*}and notice that $j\mapsto u$ is a bijection. We recall once again that the map $j\to(E_j,\psi_j)$ gives an isomorphism $\A^1\simeq Y_0(3)$. The Prym family is not modular, as can be seen by observing how its $j$-invariant \eqref{jprym} is not any of those in \cite[Table 4]{suzy} (even up to a linear change of variable). 

When $j\neq0$, letting $x\mapsto Ax$ in the model of Lemma \ref{prymdata} gives for $E'_j$ the model $y^2=ux^3-ux^2-1$, and with the additional change of variable $x\mapsto x/u$ we find the $K$-representative $E'_u:y^2=x^3-ux^2-u^2$ for the Prym family. Consider also the $(-1)$-twisted family $$F_j:=E_j^{' \ (-1)}:y^2=x^3+ux^2+u^2.$$
Observe that $F_j$ carries the rational section $Q_j=(0,u)$. As $j(F_j)$ has degree $2$ over the $j$-line, it does not factor through any of the maps $X_1(N)\to X(1)$ for $N>1,$ so the section is generically nontorsion. Observe moreover how $\jac(\c_u^{(-1)})$ has the $K$-rational section $P_j=[\infty^+ - \infty^-]$.
\begin{prop}\label{torsj}
    \begin{enumerate}[(i)]
        \item We have $Q_j=P_j$ under the embedding $F_j\hookrightarrow\jac(\c_j)^{(-1)}$ obtained by twisting \eqref{prym}.
        \item If $j=0$, $f(X)$ is linearly equivalent to $g_B(X)=X^4+4BX$ for some $B\in K^*/(K^*)^3.$ The Prym curve is then $$E_{0,B}':y^2=x^3-B^2, \ \ B\in K^*/(K^*)^3$$ and the associated twist $F_{0,B}$ has the $3$-torsion point $P_{0,B}=(0,B)$.
        \item If $j\in\Q\setminus\{0,1728\}$, $Q_j$ is torsion if and only if $j\in\{-27648/11,55296/5\}$, in which case it is $5$- and $6$-torsion, respectively.
    \end{enumerate}
\end{prop}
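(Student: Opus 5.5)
The plan is to treat the three parts separately. Part (i) is the only conceptual step; (ii) and (iii) are explicit computations on the model $F_j:y^2=x^3+ux^2+u^2$.

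For (i), I would first check that $P_j$ lies in the Prym part. Twisting by $-1$ turns $\c_j$ into $Y^2=h_f(X)$. Its two points at infinity $\infty^\pm$ are $K$-rational, since $h_f$ is monic of degree $6$. By Remark \ref{inf2}, $\phi_f$ sends both $\infty^+$ and $\infty^-$ to $0_{E_f}$, so $(\phi_f)_*[\infty^+-\infty^-]=0$. Hence $P_j$ lies in the kernel of $\jac(\c_j)^{(-1)}\to E_j^{(-1)}$, which by the twist of \eqref{prym} is the image of $F_j$.

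To identify $P_j$ with $Q_j$ I would make the embedding explicit through the Galois closure. We have $\c_f=X_f/C_3$ and $E'_f=X_f/\langle\tau\rangle$, so the correspondence $\c_f\leftarrow X_f\to E'_f$ induces the map $\jac(\c_f)\to E'_f$ complementary to $\phi_f$; composing with the isomorphism $D_f\simeq E'_f$ of \cite[Lemma 3.1]{GLNZ} realizes the inclusion in \eqref{prym}. I would then track the points at infinity. Points of $X_f$ over $\infty^\pm$ lie over $X=\infty$, hence over pairs $(X,Y)$ of the self-fibre product \eqref{sfp} with both coordinates infinite, i.e. over $v=\infty$ in the model \eqref{4model}. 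In the coordinates $x=-2B/v$ used in Lemma \ref{prymdata} (where the ``$b$'' should be $B$), $v=\infty$ corresponds to $x=0$. After the rescalings $x\mapsto Ax$, $x\mapsto x/u$ and the $(-1)$-twist, these become the points $(0,\pm u)=\pm Q_j$. So the image of $\infty^+-\infty^-$ is a combination of $\pm Q_j$, of the form $Q_j-(-Q_j)=2Q_j$ or $\pm Q_j$.

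To pin down the coefficient and the sign, I would compare the degree of the induced map on the two points at infinity, using that they are swapped by the hyperelliptic involution, which corresponds to $-1$ on $F_j$, and fix the sign by the choice of $\infty^+$. I expect this normalization, i.e. ruling out $2Q_j$ and getting the sign right, to be the main obstacle. As a fallback I would check the identity on a single specialization, say $j$ with $f(X)=X^4-4X$ reduced to canonical height, and invoke continuity of sections in the family.

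For (ii), $j=0$ means $A=0$. By the discussion before Proposition \ref{torsj}, $f\sim X^4+4BX$, with classes indexed by $B\in K^\times/(K^\times)^3$. Lemma \ref{prymdata} gives $E'_{0,B}:y^2=x^3-B^2$, and its twist is $y^2=x^3+B^2$. The horizontal line $y=B$ meets this curve only at $x=0$, with multiplicity $3$, so $(0,B)$ is a flex and therefore $3$-torsion.

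For (iii), Mazur's theorem says a rational torsion point has order $n\le 10$ or $n=12$. For each such $n$ I would impose $nQ=\O$ on $F_u$, $Q=(0,u)$, via the division polynomials $\psi_n(0)=0$, or by computing $2Q,3Q,\dots$ and comparing coordinates, e.g. $3Q=-2Q$ for $n=5$ and $3Q=-3Q$ for $n=6$. The rational roots $u$ of the resulting polynomials in $u$ give the candidates. I would then discard $u=0$ and any $u$ where $F_u$ is singular (the discriminant is $\propto u^4(4u+27)$), and convert back with $u=\tfrac{27j}{4(1728-j)}$. The expected outcome is exactly $j=-27648/11$ with order $5$ and $j=55296/5$ with order $6$. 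Equivalently, one could use the Kubert/Tate normal form parametrizations of $X_1(n)$ to decide which $n$ admit solutions. This part is finite and mechanical.
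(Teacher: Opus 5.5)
Your plans for (ii) and (iii) match the paper's: for (ii), Lemma \ref{prymdata} with $A=0$ and the flex $(0,B)$; for (iii), Mazur's bound plus the division polynomials evaluated at $Q_j$. They only need the computation carried out. Note that calling $(0,B)$ ``$P_{0,B}$'' in (ii) means identifying it with $[\infty^+-\infty^-]$, and that identification relies on (i).

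The genuine gap is in (i), exactly at the normalization step you leave open, and the route you set up would not close it as stated. Write $\pi_1:X_f\to\c_f$ and $\pi_2:X_f\to E'_f$.

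First, the correspondence used as $\Phi=\pi_{2*}\pi_1^*$ is a homomorphism $\jac(\c_f)\to E'_f$, not the inclusion $\iota$ of \eqref{prym}. Part (i) asks for $\iota^{-1}(P_j)$. Since $\Phi\circ\iota$ is multiplication by some integer $n$, knowing $\Phi(P_j)$ determines $\iota^{-1}(P_j)$ only modulo $E'_f[n]$.

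Second, the choice of conjugates matters. With $C_3=\langle(123)\rangle$ and $\tau=(12)$ as in the table, both fix the root $4$. Then $X_f$ is birational to $\c_f\times_{\P^1_X}E'_f$, the correspondence factors through $\P^1_X$, and it is zero on Jacobians.

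Third, the points of $E'_f$ over $X=\infty$ are not all at $v=\infty$. On the branch $Y\sim -X$ one has $v\sim -8B/(X-Y)^2\to 0$, which gives $(v,W)=(0,0)$, the origin. Take $\tau=(34)$ and inertia $\langle(1234)\rangle$ at infinity. The double-coset bookkeeping then shows that $\infty^+$ (resp.\ $\infty^-$) has three preimages in $X_f$: two map to one of $\pm Q_j$ (resp.\ the other) and one maps to $0_{E'_f}$. Hence $\Phi(P_j)=\pm4Q_j$, which is not in your candidate list, and $n=\pm4$.

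Finally, the fallback is not an argument. Distinct sections can agree on a fibre, so one specialization only decides among a finite list of candidate sections already known to be distinct there. It also still requires computing the embedding explicitly at that fibre.

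The paper avoids all this by importing from \cite{GLNZ} both the isomorphism $D_f\simeq E'_f$ and the location $(\infty,i\infty)$ of $P_j$ on \eqref{4model}. After that only the substitutions of Lemma \ref{prymdata} remain.

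If you want a self-contained version of your idea, use the transpose $\Psi=\pi_{1*}\pi_2^*:E'_f\to\jac(\c_f)$ with $C_3=\langle(123)\rangle$ and $\tau=(34)$:
\begin{itemize}
\item The same bookkeeping gives $\Psi(\pm Q_j)=[\infty^+-\infty^-]$. Both preimages of one of $\pm Q_j$ lie over $\infty^+$, while those of $0_{E'_f}$ lie one over $\infty^+$ and one over $\infty^-$.
\item $\phi_{f*}\Psi=0$, because $X_f$ is birational to $E_f\times_{\P^1_x}E'_f$ (as $A_4\cap\langle\tau\rangle=1$). So by exactness of \eqref{prym}, $\Psi=\iota\circ\beta$ for an isogeny $\beta$.
\item Compare $\Psi^\vee\Psi=\Phi\Psi=[4]$ with $\iota^\vee\iota=[4]$; the latter holds because the principal polarization restricts to $4$ times a principal one on the Prym. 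This gives $\deg\beta=1$, i.e.\ $\Psi=\pm\iota$.
\end{itemize}
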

\begin{proof}
    \begin{enumerate}[(i)]
        \item Recall that we have the map $\phi_j=\phi_f:\c_j\to E_j$ defined in Proposition \ref{cover}, sending both $\infty^+$ and $\infty^-$ to $0_{E_j}$ by Remark \ref{inf2}. Therefore, $P_j\in\jac(\c_j)(K)$ is sent to $0_{E_j}$ by the push forward $\phi_{j *}$; by \eqref{prym}, this means that $P_j$ lies in $E'_j\subset\jac(\c_j)$. By the paragraph of \cite[(13)]{GLNZ}, $P_j$ has the coordinates $(X,Y)=(\infty,i\infty)$ in the quartic model \eqref{4model}. Following the substitutions after that line, we find the coordinates $P_j=(0,iB)$ on the Weierstrass model of $E'_j$ from Lemma \ref{prymdata}, hence $P_j=(0,u)\in F_j$.
        \vspace{2mm}
    
        \item The first part of the statement immediately follows from the fact that linear variable changes $X\to cX$ send $g_B$ to $g_{Bc^{-3}}$ after renormalizing the leading coefficient. The second part follows directly from Lemma \ref{prymdata}, with the coordinates for $P_{0,B}\in F_{0,B}(K)$ following from the same argument as for (i). It is well-known that $P_{0,B}$ is $3$-torsion on $F_{0,B}$.
        \vspace{2mm}
    
        \item Suppose that $Q_j=(0,u)$ is torsion in $F_j$. Then $nQ_j=0_{F_j}$ for some $3\le n\le10$ or $n=12$, by Mazur's celebrated result \citep{MazurEisenstein} (and the fact that $Q_j$ is neither the origin nor $2$-torsion, since $F_j$ is in Weierstrass form). The formulas in \cite[\S 3.6]{silv} give, for the nine relevant $n$-division polynomials $\Psi_n=\Psi_{n,F_j}$ evaluated at $Q_j$, the following expressions:
        \begin{align*}
    \Psi_3(Q_j) &= 2^2u^3\\
    \Psi_4(Q_j) &= -2^5u^5\\
    \Psi_5(Q_j) &= -2^6u^8(u+4)\\
    \Psi_6(Q_j) &= -2^9u^{12}(u+8)\\
    \Psi_7(Q_j) &= -2^{12}u^{16}(u^2+4u-16)\\
    \Psi_8(Q_j) &= 2^{18}u^{21}(u^2+8u+8)\\
    \Psi_9(Q_j) &= 2^{20}u^{27}(u^3+12u^2+64u+192)\\
    \Psi_{10}(Q_j) &= 2^{25}u^{33}(u+4)(u^3+20u^2+80u-64)\\
    \Psi_{12}(Q_j) &= -2^{37}u^{48}(u+8)(3u^4+40u^3+160u^2+256u+512)
\end{align*}and hence, after clearing constants and powers of $u$ (which we can do since $j\neq0\iff A\neq0\iff u\neq0$),
    \begin{align*}
    \Psi_3^*(Q_j) &= 1\\
    \Psi_4^*(Q_j) &= 1\\
    \Psi_5^*(Q_j) &= u+4\\
    \Psi_6^*(Q_j)/\Psi_3^*(Q_j) &= u+8\\
    \Psi_7^*(Q_j) &= u^2+4u-16\\
    \Psi_8^*(Q_j)/\Psi_4^*(Q_j) &= u^2+8u+8\\
    \Psi_9^*(Q_j)/\Psi_3^*(Q_j) &= u^3+12u^2+64u+192\\
    \Psi_{10}^*(Q_j)/\Psi_5^*(Q_j) &= u^3+20u^2+80u-64\\
    \Psi_{12}^*(Q_j)/\Psi_6^*(Q_j) &= 3u^4+40u^3+160u^2+256u+512.
\end{align*}
        Since no polynomial of degree larger than $1$ appearing at the right-hand side has a rational root, $\Psi_n(Q_j)=0$ implies either $n=5\iff u=-4\iff j=-27648/11$ or $n=6\iff u=-8\iff j=55296/5$.
    \end{enumerate}
\end{proof}
\begin{reml}\label{badjs}
    The two twisted Prym curves over $\Q$ (excluding $j=0$) where the section becomes torsion, $F_{-27648/11}$ and $F_{55296/5}$, are the elliptic curves with Cremona labels 11a3 and 20a2, respectively. The former is $X_1(11)$. They have, among other interesting arithmetic properties, very small conductors. In \S\ref{5} we explore in more detail how conductors behave in our one-parameter families.\label{1stcurve}
\end{reml}

\section{Bad reduction and local arithmetic of the critical families}\label{5}
Let $\Delta=\Delta(f')=-(4A^3+27B^2)$ and let $\e_f:Y^2=f'(X)$ be the critical point curve. Our models have respective discriminants\begin{align}
    \Delta(\e_f)=16\Delta, \quad \Delta(E'_f)=16B^2\Delta, \quad \Delta(E_f)=16B^2\Delta^3\label{discs}
\end{align}by Lemma \ref{prymdata} and \cite[Proof of Lemma 4.3]{nac}.
\subsection{Selmer companions}

Using the LMFDB~\cite{LMFDB} to examine some of the curves $E_f$ and $E'_f$---whose models can be constructed using the scripts in~\cite{github}---one observes that, for a prime $\ell>3$, the two curves appear to have the same reduction type (good, potentially good or potentially multiplicative). The following result confirms it:
\begin{lem}\label{badred}
    Let $f\in\Z[X]$ be a good quartic and let $\ell\neq2$ be a prime. $E_f$ and $E'_f$ have the same reduction type at $\ell>3$.
\end{lem}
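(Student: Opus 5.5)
Since $\ell>3$, the reduction type (good, potentially good, potentially multiplicative) can be read off the $j$-invariant and the discriminant of any integral model, once we know how the Galois module structure behaves. The cleanest route is through the $4$-congruence of Lemma~\ref{4-iso}. For $\ell\nmid 2$, the curve $E$ has potentially good reduction at $\ell$ exactly when inertia $I_\ell$ acts on $T_p(E)$ through a finite quotient for any $p\neq \ell$. It has good reduction exactly when $I_\ell$ acts trivially on $E[m]$ for some (equivalently every) $m\ge3$ prime to $\ell$; this is the Néron--Ogg--Shafarevich criterion together with Serre--Tate, and it uses that the kernel of reduction $\mathrm{GL}_2(\Z_p)\to\mathrm{GL}_2(\Z/m)$ is torsion-free for $m\ge3$. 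Taking $m=4$, which is prime to $\ell$, the $G_K$-isomorphism $E_f[4]\simeq E'_f[4]$ shows at once that $E_f$ has good reduction at $\ell$ if and only if $E'_f$ does. This settles the good case.

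The remaining step is to separate potentially good from potentially multiplicative reduction, that is, to compare $v_\ell(j(E_f))<0$ with $v_\ell(j(E'_f))<0$. Here I will use the explicit formulas in terms of $j=j(Z_f)$. By the remark after Lemma~\ref{crit}, $j(E_f)=2^{-18}j(j-1536)^3/(j-1728)$, and by Lemma~\ref{prymdata}, $j(E'_f)=j^2/(4(j-1728))$. For $\ell>3$ the constants $2^{-18}$ and $4$ are units. So $v_\ell(j(E_f))<0$ if and only if $v_\ell(j)<0$ or $v_\ell(j-1728)>0$. The same condition characterises $v_\ell(j(E'_f))<0$: if $v_\ell(j)=-n<0$, both valuations are negative ($-3n$ and $-n$). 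If $v_\ell(j)\ge0$, negativity can only come from $j-1728$, which is divisible by $\ell$ in both expressions. A numerator cancellation could interfere only if $\ell\mid j$ or $\ell\mid j-1536$ at the same time as $\ell\mid j-1728$. That would force $\ell\mid1728=2^63^3$ or $\ell\mid192=2^63$, which is impossible for $\ell>3$. Hence the potentially multiplicative loci coincide.

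Combining the two steps gives equality of the three reduction types. Alternatively, the second step follows from the congruence as well: potentially multiplicative reduction corresponds to inertia acting on $E[4]$ (after a finite extension) through a nontrivial unipotent, but the $j$-computation is more transparent. I expect the main subtlety to lie in the first step, in making sure that the "good reduction" criterion is correctly detected by $E[4]$ alone. One has to be careful to invoke the statement that if inertia acts trivially on $E[m]$ with $m\ge3$ then the reduction is semistable, and that in the potentially good case it is then good. The explicit valuation check is routine given $\ell\nmid 6$.
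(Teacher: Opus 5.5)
The criterion in your first step is false as stated. Trivial action of $I_\ell$ on $E[m]$ for a single $m\ge3$ prime to $\ell$ gives only \emph{semistable} reduction (Raynaud, Serre--Tate), not good reduction. For example, take a Tate curve over $\Q_\ell$ with $\ell$ odd and $4\mid v_\ell(q)$. It has split multiplicative reduction, yet $E[4]$, an extension of $\Z/4\Z$ by $\mu_4$ cut out by $q^{1/4}$, is unramified. Your torsion-freeness argument for $\ker(\mathrm{GL}_2(\Z_2)\to\mathrm{GL}_2(\Z/4\Z))$ applies only when the image of inertia on $T_2(E)$ is finite, i.e.\ in the potentially good case. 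So the $4$-congruence alone does not ``settle the good case''.

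The fix is to reverse the order of your two steps. First run the $j$-invariant computation, which is correct and is exactly the paper's second paragraph. It shows that at $\ell\ge5$ the two curves are either both potentially multiplicative or both potentially good. Only then, in the potentially good case, transfer good reduction using $E_f[4]\simeq E'_f[4]$, N\'eron--Ogg--Shafarevich, and ``semistable and potentially good implies good''. Your closing paragraph gestures at this, but the argument as written uses the false version first.

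For the same reason, your alternative derivation of the second step from $E[4]$ fails. After a finite extension making $4\mid v(q)$, the unipotent part becomes trivial mod $4$, so $E[4]$ cannot detect potential multiplicativity. The $j$-computation is therefore not merely more transparent but necessary.

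Once reordered, your proof is correct, and it differs from the paper's in the good-versus-bad step. The paper does not use the $4$-congruence there. It works with the explicit discriminants $\Delta(E'_f)=16B^2\Delta$ and $\Delta(E_f)=16B^2\Delta^3$. It argues that a prime $\ell\ge5$ dividing them that is good for one curve forces $v_\ell(A)\ge2$ and $v_\ell(B)\ge3$. It then rescales the Weierstrass models to integral models with good reduction.

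Your route replaces these valuation manipulations with Lemma~\ref{4-iso} and local Galois criteria. It is model-free, needs no integrality of $f$, and works verbatim over any number field at primes above $\ell\ge5$. The paper's argument, by contrast, is elementary and independent of the $4$-congruence, which the paper imports from the $(4,4)$-splitting of $\jac(\c_f)$ without proving it.
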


\begin{proof}
By \eqref{discs}, primes of bad reduction for either curve are contained in $\{2\}\cup\{\ell:\ell\mid\Delta\}\cup\{\ell:\ell\mid B\}$. Recall that, given an integral Weierstrass equation for an elliptic curve, primes $\ell$ which divide the discriminant with exponent $v_\ell(\Delta)\notin12\Z$ are necessarily of bad reduction for the curve. One easily checks that bad places $\ell\neq2,3$ for $E_f$ are always bad for $E'_f$: clearly $\ell\mid\Delta(E_f)\Longrightarrow \ell\mid\Delta(E'_f)$, and if $\ell$ was good for $E'_f$ then necessarily $\ell\mid A,B$, since rescaling to another integral model makes the discriminant a $\ell$-adic unit. But then, $12\mid v_\ell(\Delta(E'_f))$ would imply $v_\ell(B)\ge3$ and $v_\ell(A)\ge2$, so rescaling $x=\ell^2X, \ y=\ell^3Y$ (and possibly iterating) gives a model for $E'_f$ minimal at $\ell$. From an equation for $E_f$ (say (20) in \cite{nac}) it is then immediate that $x=\ell^{4}X, \ y=\ell^{6}Y$ gives an integral model for $E_f$ minimal at $\ell$ for which the reduction is good. Viceversa, bad places $\ell\neq2,3$ for $E'_f$ are bad for $E_f$, since again we must have $\ell\mid A,B,$ so $v_\ell(B)\ge3$ and $v_\ell(A)\ge2$.

Recall that the places $\ell$ of potential multiplicative reduction for an elliptic curve with $j$-invariant $j$ are those such that $v_\ell(j)<0$. Equations \eqref{jcv} and \eqref{jprym} immediately give that the places where $E_f$ and $E'_f$ have potentially multiplicative reduction, with possibly the addition or exception of $2$ and $3$, are exactly $\{\ell:v_\ell(j-1728)>0\text{ or }v_\ell(j)<0\}$: this is because $\{\ell:v_\ell(j-1536)<0\}=\{\ell:v_\ell(j)<0\}$ and $\{\ell:v_\ell(j-1728)>0\}\cap\{\ell:v_\ell(j)>0\text{ or }v_\ell(j-1536)>0\}\subset\{2,3\}$. Since primes of bad reduction are either potentially good or potentially multiplicative, we are done.\end{proof}

Given an elliptic curve $E/K$, multiplication-by-$n$ gives a short exact sequence of $G_K$-modules
\begin{align}
\label{eq:kummer-short-exact}
  0 \longrightarrow E[n]
    \longrightarrow E
    \xrightarrow{[n]} E
    \longrightarrow 0.
\end{align}
Applying Galois cohomology to \eqref{eq:kummer-short-exact} yields the exact sequence
\begin{align*}
  0 \longrightarrow E(K)/nE(K)
    \xrightarrow{\kappa_{E,n}} H^1(K,E[n])
    \longrightarrow H^1(K,E)[n]
    \longrightarrow 0,
\end{align*}
where $\kappa_{E,n}$ is the Kummer map. For every place $v$ of $K$, the same construction over the completion $K_v$ gives
\begin{align*}
  0 \longrightarrow E(K_v)/nE(K_v)\longrightarrow H^1(K_v,E[n])\longrightarrow H^1(K_v,E)[n]\longrightarrow 0.
\end{align*}
Recall that the \(n\)-Selmer group of \(E/K\) is
\begin{align*}
    \mathrm{Sel}_n(E/K)=\ker\left(H^1(K,E[n])\longrightarrow\prod_v H^1(K_v,E)\right),
\end{align*}
where the \(v\)-component of the displayed map is the composite
\begin{align*}
    H^1(K,E[n])\xrightarrow{\mathrm{res}_v}H^1(K_v,E[n])\longrightarrow H^1(K_v,E).
\end{align*}
The group \(\operatorname{Sel}_n(E/K)\) is finite, see \cite{poonen} for more details.
In \cite{MR}, Mazur and Rubin introduce the notion of \textit{$n$-Selmer companions}, any two elliptic curves $E_1,E_2$  over a number field $K$ with the property that twisting $E_1$ and $E_2$ by any given quadratic character of $K$ yields two curves with isomorphic $n$-Selmer groups. They give some applications, some examples and a useful criterion for determining whether two curves are companions \cite[Theorem 3.1]{MR}. 

Until recently, only finitely many pairs of non-isogenous Selmer companions were known (where two pairs related by component-wise simultaneous quadratic twisting are considered the same). In \cite{Sp}, Spencer constructed an infinite family of such pairs by showing that, under certain congruence conditions on the coefficients for a Weierstrass model, a rational elliptic curve and its Hessian are $3$-Selmer companions. Here, we show the existence of infinitely many pairs of non-isogenous $2$-Selmer companions over $\Q$. We begin with a few useful lemmas (the proof of the first can be found in \cite{serre-local-fields}).
\begin{lem}\label{2can}
    Let $\ell>2$ and $u\in1+\ell\Z_\ell$. Then $u$ is a $2^k$-th power in $\Z_\ell^\times$ for all $k>0$. Moreover, if $u\in1+2^{k+2}\Z_2$ for some $k>0$, $u$ is a $2^k$-th power in $\Z_2^\times.$\label{ladic}
\end{lem}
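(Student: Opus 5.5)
The plan is Hensel's lemma, applied through the binomial series, with the two parts of the statement (odd $\ell$ and $\ell=2$) handled separately. Since an $m$-th root of an $m$-th root is an $mn$-th root, it suffices in the first part to show that every $u\in1+\ell\Z_\ell$ is a square in $1+\ell\Z_\ell$. The class of such $u$ is then closed under taking square roots, and induction on $k$ gives $2^k$-th roots for all $k$.

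For $\ell>2$, consider $g(T)=T^2-u\in\Z_\ell[T]$ and the approximate root $T_0=1$. We have $g(1)=1-u\equiv0\pmod\ell$ and $g'(1)=2\in\Z_\ell^\times$. The simple form of Hensel's lemma therefore gives a unique root $T\in\Z_\ell$ with $T\equiv1\pmod\ell$, so $T\in1+\ell\Z_\ell$ and the induction above applies. An alternative is to observe that $x\mapsto x^{2^k}$ is an automorphism of the pro-$\ell$ group $1+\ell\Z_\ell$ because $2^k$ is prime to $\ell$; I would mention this as a one-line check.

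For $\ell=2$, I use the stronger form of Hensel's lemma: if $|g(a)|<|g'(a)|^2$ then $g$ has a root in $\Z_2$ close to $a$. Take $g(T)=T^{2^k}-u$ and $a=1$. Then $v_2(g(1))=v_2(1-u)\ge k+2$ and $v_2(g'(1))=v_2(2^k)=k$, so the condition $v_2(g(1))>2v_2(g'(1))$ becomes $k+2>2k$. This holds only for $k=1$, so the direct approach does not cover general $k$.

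This is the main obstacle: the naive Hensel bound is too weak for large $k$. To get around it, I would use the $2$-adic logarithm and exponential instead. The map $\log\colon 1+4\Z_2\to4\Z_2$ is an isomorphism with inverse $\exp$, since $\exp$ converges on $4\Z_2$. It also satisfies $v_2(\log u)=v_2(u-1)$ on $1+4\Z_2$, so it maps $1+2^{m}\Z_2$ onto $2^{m}\Z_2$ for every $m\ge2$. Given $u\in1+2^{k+2}\Z_2$, we get $\log u\in2^{k+2}\Z_2$. Hence $\log u/2^k\in4\Z_2$, and $w=\exp(\log u/2^k)\in1+4\Z_2$ satisfies $w^{2^k}=u$.

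The same argument written with the binomial series is $w=\sum_n\binom{1/2^k}{n}(u-1)^n$. Here one checks that $v_2\bigl(\binom{1/2^k}{n}\bigr)\ge -nk-v_2(n!)\ge -n(k+1)$, which is precisely what the hypothesis $v_2(u-1)\ge k+2$ is designed to absorb. I would present the proof with the logarithm and cite the standard facts about $\log$ and $\exp$ on $1+4\Z_2$ from the reference for local fields.
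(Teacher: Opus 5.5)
Your proof is correct: Hensel's lemma handles odd $\ell$, and the isomorphism $\log\colon 1+4\Z_2\to4\Z_2$ (with $v_2(\log u)=v_2(u-1)$) produces a $2^k$-th root of any $u\in1+2^{k+2}\Z_2$. The paper does not prove the lemma itself but cites \cite{serre-local-fields}, where these facts come from the same Hensel and $\log/\exp$ analysis of the unit filtration, so your argument is essentially the one the citation points to.
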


\begin{lem}\label{preserve}
    Let $f(X)=X^4+2AX^2-8X$ with $|A|>7$, and let $\ell$ be a prime such that $E_f$ and $E'_f$ are potentially multiplicative over $\Q_\ell$. Then, $\iota$ identifies the canonical order-$4$ subgroups (cf$.$ \cite[Definition 2.3]{MR}) of $E_f$ and $E'_f$. Moreover, if $\e_f$ is also potentially multiplicative over $\Q_\ell$, the $2$-congruence between it and any of the other two curves identifies the respective canonical order-$2$ subgroups.
\end{lem}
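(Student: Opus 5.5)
Here $f(X)=X^4+2AX^2-8X$, so $B=-2$ and $f'(X)=4(X^3+AX-2)$. The plan is to work entirely with the $2$-torsion and the explicit halvings described after Lemma \ref{compatibility}.

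\emph{Step 1: describe the canonical subgroups via the Tate curve.} For a potentially multiplicative curve over $\Q_\ell$, the canonical subgroup of order $2^k$ is the image of $\mu_{2^k}$ under the Tate parametrization, possibly after a quadratic twist. On $2$-torsion, the canonical order-$2$ point is the root $e$ of the Weierstrass cubic characterised $\ell$-adically as the \emph{isolated} root: the other two roots are congruent modulo a higher power of $\ell$, since $v_\ell(e_i-e_j)$ is large exactly for the pair that collides as $q\to0$.

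\emph{Step 2: compare the isolated roots.} For $E'_f$ the roots are $X_i^2$, for $E_f$ they are $-f(X_i)$, and for $\e_f$ they are $X_i$. By the identity \eqref{alg.id},
\[
f(X_j)-f(X_i)=(X_i^2-X_j^2)(X_i-X_j)^2 .
\]
Hence if $X_i,X_j$ are the colliding pair for $\e_f$, they are also the colliding pair for $E'_f$ and for $E_f$. The valuation gaps add, since $X_i^2-X_j^2=(X_i-X_j)(X_i+X_j)$. So the isolated index $k$ is the same for all three curves, at least when the relevant $\ell$-adic valuations of the $X_i+X_j$ behave. The hypothesis $|A|>7$, together with $B=-2$, is where I expect to need care. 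I would show that $v_\ell(X_i+X_j)=v_\ell(-X_k)=0$ for $\ell$ odd: indeed $X_1X_2X_3=2$, so each $X_k$ is a unit away from $2$. At $\ell=2$ I would use the explicit Newton polygon of $X^3+AX-2$, and $|A|>7$ serves to exclude small exceptional $A$ where the valuations tie. By Lemma \ref{compatibility} and Remark \ref{Q-comp}, the $2$-congruences are the index-preserving ones, which proves the second assertion and also the order-$2$ part of the first.

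\emph{Step 3: lift to order $4$.} The canonical order-$4$ subgroup is cyclic and generated by a half $P$ of the canonical $2$-torsion point $T_k$. Using the halving formula $x=e_k+uv$ with $u^2=e_k-e_i$ and $v^2=e_k-e_j$, there are four halves. The canonical ones are those lying in the Tate image of $\mu_4$, and $\ell$-adically these are singled out by the choice of $u,v$ making $x-e_k$ of the smaller valuation, or by the sign of $uv$. The map $\delta_{ki}\mapsto(X_k-X_i)\delta_{ki}$ from the text defines $\iota$ up to the sign ambiguity, and I must check that it carries canonical halves to canonical halves. I would do this by comparing $v_\ell$ of $x(P)-e_k$ on both sides using \eqref{alg.id}, and by checking that the square-class condition (Lemma \ref{2can}) is compatible on the two sides, with the factor $(X_k-X_i)(X_k-X_j)$ behaving as a square times a unit in $1+\ell\Z_\ell$. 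I expect this step to be the main obstacle, because the sign ambiguity of the lift must be resolved uniformly and because $\ell=2$ needs the $1+2^{k+2}\Z_2$ refinement of Lemma \ref{2can}. There I would exploit the specific choice $B=-2$ to pin down the $2$-adic expansions of the $X_i$.
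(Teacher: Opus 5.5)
Your Steps 1 and 2 are the paper's argument: the canonical $2$-subgroup is the non-nodal Weierstrass point, and the index-preserving $2$-congruences send the coalescing pair to the coalescing pair. Your valuation bookkeeping through \eqref{alg.id} and $X_i+X_j=-X_k$ is in fact more careful than the paper's appeal to \eqref{discs}.

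One misattribution: $|A|>7$ has nothing to do with valuation ties at $\ell=2$. By the rational root theorem it guarantees that $X^3+AX-2$ has no rational root (only $A\in\{1,-3,-5\}$ give one). That is exactly the hypothesis of Remark \ref{Q-comp}, which you invoke without checking it.

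The genuine gap is Step 3, which carries the first assertion. Your criterion for recognising the canonical half cannot work. The four halves of $T_k$ have $x(P)=e_k\pm uv$, so $v_\ell(x(P)-e_k)=\frac12\bigl(v_\ell(e_k-e_i)+v_\ell(e_k-e_j)\bigr)$ for all of them, and comparing these valuations across $\iota$ detects nothing. A square-class comparison is equally blind. The canonical and non-canonical halves differ by the sign of $uv$, that is, by whether $u/v$ is close to $+1$ or to $-1$. Knowing that $(X_k-X_i)(X_k-X_j)$ is a square times a principal unit does not decide which.

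The missing idea is the paper's degeneration to the Tate cusp $q=0$:
\begin{itemize}
\item There the curve is $y^2+xy=x^3$, i.e.\ $(y+x/2)^2=x^2(x+\tfrac14)$, with node at $x=0$.
\item The non-nodal $2$-torsion point is $z=-1$, with $x=-\tfrac14$.
\item The canonical $4$-subgroup is generated by $z=i$, with $x(i)=-\tfrac12$.
\item Hence $uv=-\tfrac14=u^2=v^2$, so the canonical halves are exactly those with $u=v$. The other halves have $uv=+\tfrac14$ and run into the node.
\end{itemize}
The lift of the $2$-congruence sends $u\mapsto (X_k-X_i)u$ and $v\mapsto (X_k-X_j)v$. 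So it multiplies $u/v$ by $(X_k-X_i)/(X_k-X_j)$, which tends to $1$ precisely because $X_i$ and $X_j$ coalesce. What you need is closeness of this ratio to $1$ (at $\ell=2$, closer than $-1$ is), not a square-class statement; note also that the $X_i$ need not lie in $\Q_\ell$, so $1+\ell\Z_\ell$ should be the principal units of their field.

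This also removes your worry about the sign ambiguity. Changing the overall sign in $\delta_{ij}\mapsto\pm(X_i-X_j)\delta_{ij}$ leaves $u'v'$, hence $x(\iota(P))$, unchanged. It only replaces $\iota(P)$ by $-\iota(P)$, which generates the same subgroup.
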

\begin{proof}
    We prove the second part first. \cite[Definition 2.3]{MR} and \cite[VII, Proposition 5.1]{silv} imply that the canonical $2$-subgroup is generated by the non-nodal Weierstrass point over the extension where the curve attains multiplicative reduction, corresponding to $z=-1\in\overline\Q_\ell^\times/q^\Z$ under Tate uniformization. By Remark \ref{Q-comp} the $2$-congruences commute, since $\e_f$ has no rational point of exact order two by the rational root theorem, as $|A|\ge7$. Since $B=-2$, \eqref{discs} implies that coalescence of two Weierstrass roots for either $\e_f, \ E_f$ or $E'_f$ is equivalent to coalescence of the corresponding two roots of the other curves under the respective $2$-congruences, so the first claim follows.

    For the canonical $4$-subgroup, let $X_3$ be the non-coalescing root of $\e_f$ (and hence $X_3^2$ and $-f(X_3)$ for $E'_f$ and $E_f$ respectively). The canonical $4$-subgroup corresponds to $\mu_4\subset\overline\Q_\ell^\times/q^{\Z}$ under Tate uniformization, so it is generated by one of the two halves of the non-nodal Weierstrass point, i.e$.$ of $(X_i^2,0)$ (resp$.$ $(-f(X_i),0)$) for $E'_f$ (resp$.$ $E_f$). Recall that for a curve $C:y^2=\prod_{i=1}^3(x-e_i)$, the halves of $T_i=(e_i,0)$ are the points with $x=e_i+uv$, where $u$ and $v$ satisfy $u^2=e_i-e_j$ and $v^2=e_i-e_k$. We can distinguish the two halves at the Tate cusp $q=0$: there, Tate \cite[(46)-(48)]{tateellfun} gives uniformization $y^2+xy=x^3$ with smooth locus identified with $\mathbb{G}_m$ via $$z\mapsto\left(\frac{z}{(1-z)^2},\frac{z^2}{(1-z)^3}\right).$$ So, the canonical $4$-subgroup is generated by the image of $i=\sqrt{-1}$, and we have $x(-1)=-\frac14, \ x(i)=-\frac12$. For the non-nodal point we then have $uv=-\frac14,$ but also $u^2=v^2=-\frac14$ since the node has $x=0$, so $u=v$. In our case of $E'_f$ and $E_f$, recalling the notation from the paragraph after Lemma \ref{compatibility}, this means
    that $\iota$ preserves the canonical $4$-subgroups if and only if $$\left.\frac{X_3-X_1}{X_3-X_2}\right\vert_{q=0}=1,$$ and this immediately follows from the coalescence of $X_1$ and $X_2$.
\end{proof}

\begin{lem}\label{5.6rep}
    Let $F$ be a finite extension of $\Q_\ell$ for some rational prime $\ell$, let $E_1,E_2$ be non-split potentially multiplicative elliptic curves over $F$ with the same quadratic splitting field $L/F$, and let $\iota:E_1[2^k]\simeq E_2[2^k]$ identify the canonical order-$2^k$ subgroups (cf$.$ \cite[Definition 2.3]{MR}). If the Tate parameters of both curves are non-norms from $L$, $\iota$ identifies the Kummer images $\kappa_{E_i,2^k}(E_i(F)/2^kE_i(F))$ inside the common $H^1(F,E_i[2^k])$.
\end{lem}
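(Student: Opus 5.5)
The strategy is to compute both Kummer images explicitly through Tate uniformization over $L$, and then to use the identification of canonical subgroups to match them. Let $\chi$ be the quadratic character of $L/F$. Over $L$, each $E_i$ becomes a Tate curve $\overline L^\times/q_i^{\Z}$ with $q_i\in F^\times$, since the Tate parameter is defined over $F$ even for the twist. The curve $E_i$ is the $\chi$-twist of the Tate curve $\mathbb{G}_m/q_i^\Z$ over $F$. Write $C_i\subset E_i[2^k]$ for the canonical subgroup, which is the image of $\mu_{2^k}$; as a Galois module it is $\mu_{2^k}\otimes\chi$. The quotient $E_i[2^k]/C_i$ is then $\Z/2^k\otimes\chi$. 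This gives the exact sequence
$$0\to C_i\to E_i[2^k]\to (\Z/2^k\Z)(\chi)\to0.$$
Because $\iota$ identifies $C_1$ with $C_2$, it induces an isomorphism of these filtered modules. The isomorphism is $\pm$-compatible on the graded pieces, and those pieces are the same Galois modules for $i=1,2$.

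The first key step is to describe $E_i(F)$. It is the $\chi$-twisted points, namely $\{z\in L^\times: \sigma(z)=z^{-1}\}/q_i^{\Z}$, where $\sigma$ generates $\Gal(L/F)$. By Hilbert 90 this equals $\{w/\sigma(w)\}$ modulo $q_i^\Z$. The hypothesis that $q_i$ is a non-norm from $L$ will be used to control the component group: it should force $E_i(F)$ to be exactly the image of the norm-one torus $L^{N=1}$, with $q_i^{\Z}\cap L^{N=1}$ trivial or of a controlled index. The effect is that the Kummer image lies entirely in the image of $H^1(F,C_i)\to H^1(F,E_i[2^k])$. Concretely, I would show that the Kummer map factors as
$$E_i(F)/2^k\;\cong\; L^{N=1}/(L^{N=1})^{2^k}\;\to\; H^1(F,\mu_{2^k}\otimes\chi)\;\to\; H^1(F,E_i[2^k]).$$
The first arrow is the twisted Kummer map for $\mathbb{G}_m\otimes\chi$, and it does not depend on $q_i$. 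The non-norm hypothesis is exactly what rules out a contribution from points projecting nontrivially to the quotient $\Z/2^k(\chi)$. Such points would come from $z$ with $z^{2^k}\in q_i^{\Z}$ and $z\sigma(z)=1$, and taking norms forces $q_i$ to be a norm. I expect this step to be the main obstacle. It requires a careful check that a point of $E_i(F)$ whose Kummer class maps nontrivially to $H^1(F,\Z/2^k(\chi))$ produces a norm relation for $q_i$, possibly only up to squares of $q_i$, which needs attention when $\ell=2$.

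Once both Kummer images are shown to equal the image of the same group $H^1(F,\mu_{2^k}\otimes\chi)$ under the inclusions $C_i\hookrightarrow E_i[2^k]$, the conclusion follows. Since $\iota$ restricted to $C_1$ is an isomorphism onto $C_2$ of the Galois module $\mu_{2^k}\otimes\chi$, it is given by multiplication by a unit of $\Z/2^k$ on this cyclic module. That unit preserves the image of $L^{N=1}/(L^{N=1})^{2^k}$, so $\iota_*$ carries one Kummer image onto the other. As a sanity check I would compare the orders of the two images: the local duality formula gives $|E_i(F)/2^k|=|E_i(F)[2^k]|\cdot|2^k|_F^{-1}$, and non-norm-ness makes $E_i(F)[2^k]=C_i(F)$ for both curves, so the orders agree.
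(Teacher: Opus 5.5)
Your argument is correct, but it takes a different route from the paper, whose proof is only a citation of \cite[Lemma 5.6]{MR} together with their Definitions~2.3 and~5.5. The later use of that lemma in the proof of Proposition~\ref{2self'} shows what it says. The nonsplit Kummer image contains a subgroup determined by the canonical subgroup. When the Tate parameter $q$ is a norm, one extra generator is added, built from $\beta\in L^\times$ with $N_{L/F}(\beta)=q$. Your proposal is in effect a self-contained proof of the non-norm case of that lemma, using the twisted Tate uniformization and the norm-one torus $T=R^1_{L/F}\mathbb{G}_m$. The citation is shorter. Your route shows exactly where the non-norm hypothesis enters: it kills the component group $E_i(F)/T(F)$. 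It also shows that nothing depends on $k$, on $\ell=2$, or on whether $L/F$ is ramified.

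A few details need fixing.
\begin{itemize}
\item The $F$-points of the twist are $E_i(F)=\{z\in L^\times: N_{L/F}(z)\in q_i^{\Z}\}/q_i^{\Z}$, not $\{z:\sigma(z)=z^{-1}\}/q_i^{\Z}$. Your sentence about $z$ with $z^{2^k}\in q_i^{\Z}$ and $z\sigma(z)=1$ conflates $2^k$-torsion points with rational points.
\item The step you call the main obstacle is a one-liner and needs no extra care at $\ell=2$. If $N_{L/F}(z)=q_i^m$ with $m$ odd, then $q_i=N_{L/F}(z\,q_i^{-(m-1)/2})$ is a norm. If $m$ is even, then $z\,q_i^{-m/2}\in T(F)$. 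Since $N_{L/F}(q_i^n)=q_i^{2n}$, also $q_i^{\Z}\cap T(F)=\{1\}$, so $E_i(F)\cong T(F)$.
\item For $z\in T(F)$ and $y^{2^k}=z$, the cocycle $\tau\mapsto\tau(y)^{\chi(\tau)}/y$ has $2^k$-th power $\tau(z)^{\chi(\tau)}/z=1$. So it lies in $\mu_{2^k}$ on the nose, which gives your factorization through $H^1(F,\mu_{2^k}\otimes\chi)$ independently of $q_i$.
\item The restriction $\iota|_{C_1}$ is multiplication by an arbitrary unit of $\Z/2^k\Z$, not only $\pm1$ once $k\ge3$. Your unit argument handles this correctly.
\end{itemize}

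You could also drop the unit argument entirely. In the non-norm case the Kummer image is the full image of $H^1(F,C_i)\to H^1(F,E_i[2^k])$. The kernel of that map is generated by the class of $q_i$ pushed from $H^1(F,\mu_2)$. Its image in $H^1(F,T)\cong F^\times/N_{L/F}L^\times$ is nonzero. Hence it does not lie in the image of $T(F)/2^kT(F)$, which has index $2$ in $H^1(F,C_i)$. The conclusion then only needs $\iota(C_1)=C_2$.
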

\begin{proof}
    This follows directly from \cite[Lemma 5.6]{MR}, along with Definitions 2.3 and 5.5 in the same paper.
\end{proof}

\begin{thm}\label{companions}
    Let $A\in(-3+32\Z)\setminus\{-3\}$. For the quartic $f(X)=X^4+2AX^2-8X$, the elliptic curves $E_f$ and $E'_f$ are $2$-Selmer companions over any number field.
\end{thm}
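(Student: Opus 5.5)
The plan is to verify the hypotheses of the Mazur–Rubin criterion \cite[Theorem 3.1]{MR} for the Galois-module isomorphism $E_f[2]\simeq E'_f[2]$. We take this to be the restriction of the anti-isometry $\iota$ of Lemma \ref{4-iso}, which by Remark \ref{Q-comp} and Lemma \ref{compatibility} is the unique $2$-congruence, $X_i^2\mapsto -f(X_i)$. We work over $\Q$ and with the version of the criterion that is stable under base change, so that the conclusion holds over every number field; this stability is the reason \cite[Theorem 3.1]{MR} phrases its conditions through the $4$-congruence and the reduction types. Roughly, the criterion requires three things: a $G_\Q$-isomorphism $E_1[4]\simeq E_2[4]$ (we have $\iota$, and whether it inverts or preserves the Weil pairing only matters for which form of the criterion we invoke); that at every place of additive or potentially multiplicative reduction the curves have matching behaviour; and that at the prime $2$ (and at places dividing $2$) the local Kummer images correspond.

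The steps are as follows. First, for $\ell>3$, Lemma \ref{badred} shows that $E_f$ and $E'_f$ have the same reduction type. At potentially good primes, $\iota$ already suffices, since $4$-torsion determines the local Kummer image for $\ell\neq 2$ with good or additive potentially good reduction via the usual criterion. At potentially multiplicative primes, we use Lemma \ref{preserve}, whose hypothesis $|A|>7$ holds because $A\equiv -3 \pmod{32}$ and $A\ne -3$. That lemma says $\iota$ matches the canonical order-$4$ subgroups. If the curves are split multiplicative, or non-split with Tate parameters of matching norm behaviour, \cite[Lemma 5.6]{MR}, in the form of Lemma \ref{5.6rep}, identifies the Kummer images. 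To arrange the non-norm/norm condition we compare Tate parameters: $j(E_f)$ and $j(E'_f)$ are explicit rational functions of $j=j(Z_f)$ given in \eqref{jcv} and \eqref{jprym}, and $v_\ell$ of each Tate parameter is $-v_\ell$ of the respective $j$. We also check that the quadratic splitting field is the same for both curves, by comparing $c_6$'s modulo squares using \eqref{discs} and $B=-2$.

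Second, at $\ell=3$, we use the congruence on $A$ to pin down the reduction type. For example, $A\equiv 0\pmod 3$ gives $\Delta=-(4A^3+108)$, and we compute the valuations directly.

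Third, at $\ell=2$, which we expect to be the main obstacle, the condition $A\in -3+32\Z$ must force both curves into a controlled shape. Here we would compute minimal models from $y^2=x^3-Ax^2-4$ and from \cite[(20)]{nac}, determine the Tate parameters (the curves should be potentially multiplicative at $2$ since $v_2(j)<0$ there), and use Lemma \ref{2can}: a unit $u\in 1+2^{k+2}\Z_2$ is a $2^k$-th power. This should make the relevant ratio of Tate parameters, or of $c_6$ invariants, a $4$th power in $\Q_2$, so that the curves become isomorphic, or at least have identical local Kummer images, after the same quadratic twist. The first thing to try is to write $A=-3+32m$ and expand $q$ or $j^{-1}$ $2$-adically to leading order.

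Finally, we check that the pairs are non-isogenous, since the $j$-invariants differ, and we assemble the conditions. At each place the identification of local conditions is invariant under simultaneous quadratic twist, because twisting preserves the canonical subgroups and the reduction type match. Hence \cite[Theorem 3.1]{MR} gives companionship over $\Q$, and base change to any number field $K$ preserves every local hypothesis.
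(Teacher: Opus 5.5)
Your framework (\cite[Theorem 3.1]{MR} with $p=2$, $k=1$, the $4$-congruence of Lemma \ref{4-iso}, and Lemma \ref{preserve} for the canonical $4$-subgroups) is the right one. However, you have misread the remaining hypotheses, and what you put in their place does not work.

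\textbf{The $2$-adic step.} The criterion does not ask for a comparison of Kummer images at $2$, or of Tate parameters at any prime. Beyond the congruence and the canonical-subgroup condition, it only needs $E_f$ and $E'_f$ to have the same set $S$ of primes of potentially multiplicative reduction, with $2\in S$. The hypothesis $A\equiv-3\pmod{32}$ is there exactly to give $2\in S$. By \eqref{jb=2}, $v_2(j(E_f))<0$ and $v_2(j(E'_f))<0$ hold simultaneously iff $v_2(A+3)>4$; for odd $A$ one has $v_2(j(E'_f))=4-v_2(A+3)$. You say the curves ``should be'' potentially multiplicative at $2$ but never check it. Instead you propose to show that some ratio of Tate parameters or $c_6$'s is a fourth power in $\Q_2$, which is left as a hope.

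\textbf{Why the local bookkeeping cannot prove companionship.} You need $\mathrm{Sel}_2(E_f^\chi/K)\simeq\mathrm{Sel}_2((E'_f)^\chi/K)$ for every quadratic character $\chi$ and every number field $K$. The non-norm hypothesis of Lemma \ref{5.6rep} depends on the splitting field, which moves with $\chi$, so it cannot be ``arranged'' once and for all. Conditions such as ``$q$ is not a norm'' or ``this unit is a fourth power in $\Q_2$'' are also not preserved under base change. So your closing sentence about base change is unjustified in your set-up. The point of \cite[Theorem 3.1]{MR} is that its hypotheses depend only on the Galois module $E[4]$ and on $j$-invariants, so they are visibly stable under twisting and base change. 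The local analysis at primes of $S$ is then carried out inside its proof, for all twists at once.

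\textbf{The step at $\ell=3$.} This step is missing. Lemma \ref{badred} covers only $\ell>3$. The hypothesis on $A$ says nothing modulo $3$; note that $\Delta=-(4A^3+108)$ holds for every $A$, since $B=-2$. And $3$ really can lie in $S$, e.g.\ for $A\equiv-3\pmod{2592}$. What you need is that $3$ is potentially multiplicative for $E_f$ if and only if it is for $E'_f$. This follows by comparing the $3$-adic valuations of the two $j$-invariants in \eqref{jb=2}; the paper does it via $v_3(j-1728)=6-v_3(A^3+27)$ with $j=j(Z_f)$. With that, Lemma \ref{badred} gives equality of the sets $S$, and \cite[Theorem 3.1]{MR} applies directly.
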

\begin{proof}
With our choice, we have \begin{align}\label{jb=2}
j(\e_f) = 1728\frac{A^3}{A^3+27},\quad
j(E_f)  = -\frac{A^3(A^3-216)^3}{(A^3+27)^3},\quad
j(E'_f) = -\frac{16A^6}{A^3+27}.
\end{align} 
We want to apply Theorem 3.1 in \cite{MR}, setting $p=2$ and $k=1$. We know from Lemma \ref{4-iso} that there is a $G_K$-invariant isomorphism $\iota:E_f[4]\simeq E'_f[4]$, so condition (i) in the theorem is satisfied. Lemma \ref{preserve} shows that (iii) holds, so for (ii) and (iv) to be satisfied it is enough to show that $E_f$ and $E'_f$ have the same set $S$ of primes of potential multiplicative reduction, and $2\in S$.
Observe that $v_3(j(E'_f))<0\iff v_3(j)<0$ by \eqref{jb=2} and $v_3(j(E_f))<0\iff v_3(j)<0$ or $v_3(j-1728)>6$ by \eqref{jcv}. Since for us $v_3(j-1728)=6-v_3(A^3+27)$ by \eqref{jb=2}, the two conditions are equivalent, so $3$ is a prime of potentially multiplicative reduction for $E_f$ if and only if it is for $E'_f$. Then, thanks to Lemma \ref{badred}, the requirement reduces to $2$ being of potentially multiplicative reduction for both curves. By \eqref{jb=2}, potential multiplicativity at $2$ is equivalent to $v_2(A+3)>4$, which follows from our hypothesis on $A$.
\end{proof}

The infinitude result announced in the Introduction now follows easily:

\begin{proof}[Proof of Theorem \ref{infcomp}]
    Let us restrict to a maximal subset of the $A$ found in Theorem \ref{companions} for which the function $A\mapsto (j(E_f),j(E'_f))$ is injective. Clearly such a set is infinite by \eqref{jb=2}. As $E_f$ and $E'_f$ are non-isogenous over $\Q(j)$ ($j(E'_f)$ has degree $2$ and, as we already observed, it is not modular over the $j$-line), $E_f\sim E'_f$ happens over $\Q$ for finitely many values of $j$ (there are only finitely many possible degrees for cyclic isogenies over $\Q$, by well-known results of Mazur and Kenku \cite{MazurRationalIsogenies}), and each such value precludes finitely many $A$, so the claim follows.
\end{proof}

\begin{lem}\label{semistable}
    Let $f$ be as in the statement of Theorem \ref{companions} with $A$ coprime to $3$. The curves $E_f$ and $E'_f$ are semistable of the same conductor.\end{lem}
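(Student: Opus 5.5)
The plan is to reduce the statement to a prime-by-prime analysis of the discriminants \eqref{discs} and $j$-invariants \eqref{jb=2}, with $B=-2$. Write $N:=A^3+27=(A+3)(A^2-3A+9)$. Then $\Delta(f')=-4N$, $\Delta(E'_f)=-2^8N$ and $\Delta(E_f)=-2^{12}N^3$, up to the fixed normalizations of the chosen models. Since $A\equiv-3\pmod{32}$, the factor $A^2-3A+9\equiv27$ is odd, so $v_2(N)=v_2(A+3)\ge5$.

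\emph{Odd primes.} Since $3\nmid A$, we have $3\nmid N$. Hence $3\nmid\Delta(E'_f)$ and $3\nmid\Delta(E_f)$, so both curves have good reduction at $3$.

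For a prime $\ell>3$ with $\ell\nmid N$, the curve $E'_f$ has good reduction because $\ell\nmid\Delta(E'_f)$. By Lemma \ref{badred}, $E_f$ then has good reduction at $\ell$ as well.

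Now let $\ell>3$ with $\ell\mid N$.
\begin{itemize}
\item For $E'_f:y^2=x^3-Ax^2-4$ we get $c_4=16A^2$. Here $\ell\nmid A$, since $\ell\mid A$ and $\ell\mid A^3+27$ would force $\ell=3$. So $c_4$ is an $\ell$-adic unit while $\ell\mid\Delta$, which means the reduction is multiplicative.
\item For $E_f$ we use $c_4^3=j\Delta$ together with $j(E_f)=-A^3(A^3-216)^3/N^3$. Note that $(A^3+27)-(A^3-216)=3^5$, so $\ell\nmid A^3-216$. Consequently $c_4(E_f)$ is an $\ell$-adic unit on the integral model, which is minimal at $\ell$, and the reduction is again multiplicative.
\end{itemize}
So away from $2$ the two curves share the bad set $\{\ell>3:\ell\mid N\}$, and both are multiplicative there.

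\emph{The prime $2$.} This is where I expect the real work. By \eqref{jb=2}, $v_2(j(E'_f))=4-v_2(A+3)<0$ and $v_2(j(E_f))=-3v_2(A+3)+\ldots<0$, so both curves are potentially multiplicative at $2$; this was already used in Theorem \ref{companions}. What remains is to show that the reduction is actually multiplicative and not additive, i.e.\ that the relevant quadratic twist character is unramified. My first attempt is an explicit change of variables $x=4x'+r$, $y=8y'+4sx'+t$ applied to $y^2=x^3-Ax^2-4$. I would choose $r,s,t$ depending on $A$ modulo $32$ (for instance $r\equiv A$ or a small even shift) so that the resulting $a_i'$ are integral. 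The congruence $A\equiv-3\pmod{32}$ should be exactly what makes this work. The new model then has $c_4'=A^2$, which is odd, so the reduction is multiplicative with conductor exponent $1$. If the explicit substitution becomes messy, the fallback is Kraus' criterion: with $c_4=2^4A^2$ and $c_6$ computed from the model, check the required congruence for $c_6/2^6$ modulo $4$ or $8$. Alternatively, one can verify that the Tate-curve twist is by $\Q_2(\sqrt{-1})$ or by $\Q_2(\sqrt{\pm3})$, the latter being unramified. For $E_f$ I would do the same using the explicit model \eqref{model}, or more economically as follows. By Lemma \ref{preserve} and the $4$-congruence $\iota$ of Lemma \ref{4-iso}, the quadratic splitting fields of $E_f$ and $E'_f$ over $\Q_2$ are detected on $4$-torsion and therefore coincide. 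Multiplicativity of $E'_f$ at $2$ then transfers to $E_f$.

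\emph{Conclusion.} Both curves are semistable with conductor $2\cdot\operatorname{rad}(N/2^{v_2(N)})$, so in particular the two conductors are equal. The main obstacle is the $2$-adic step: either producing the integral minimal model at $2$ explicitly, or justifying that the twisting character attached to potential multiplicativity is unramified, and transferring this conclusion from $E'_f$ to $E_f$.
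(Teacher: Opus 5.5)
Your odd-prime analysis and your plan for $E'_f$ at $2$ are the same as the paper's proof. There too the point is that $c_4(E'_f)=16A^2$ and $c_4(E_f)=16A(A^3-216)$ are units at every $\ell\mid A^3+27$. At $2$ the paper uses a substitution of exactly your shape, with $r=2$, $s=1$, $t=4$, which gives
\[
E'_f:\ Y^2+XY+Y=X^3+\tfrac{5-A}{4}X^2+\tfrac{1-A}{4}X-\tfrac{A+3}{16}.
\]
This model is integral because $16\mid A+3$, and its $c_4=A^2$ is odd. So the computation you left open does go through. However, $r$ must be even, because the new $a_4$ is $(3r^2-2Ar-2st)/16$; your tentative choice $r\equiv A$ would therefore fail.

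Where you genuinely differ is $E_f$ at $2$. The paper writes down a second explicit integral model, via $x=4X+1$, $y=8Y+4X+4$, with $c_4=A(A^3-216)$ odd. Your ``economical'' route instead transfers multiplicativity through $\iota$, and that argument is sound:
\begin{enumerate}[(i)]
\item Both curves are potentially multiplicative at $2$, so Lemma~\ref{preserve} applies: $\iota$ carries the canonical order-$4$ subgroup of $E_f$ onto that of $E'_f$.
\item As $G_{\Q_2}$-modules these subgroups are $\mu_4\otimes\chi_{E_f}$ and $\mu_4\otimes\chi_{E'_f}$, where $\chi$ denotes the splitting character.
\item Since $-1$ acts nontrivially on $\mu_4$, the two characters coincide. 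This is why order $4$ is essential; on $\mu_2$ the twist is invisible.
\item $\chi_{E'_f}$ is unramified because $E'_f$ is multiplicative, so $\chi_{E_f}$ is unramified and $E_f$ is multiplicative at $2$.
\end{enumerate}
Your version avoids the congruences modulo $64$ needed to check integrality of the second model. The cost is that it rests on Lemma~\ref{preserve}, whose proof is the least explicit step of the paper. The paper's computation is self-contained and also exhibits the minimal models.

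One slip in your fallback: $\Q_2(\sqrt3)$, like $\Q_2(\sqrt{-1})$, is ramified. The unramified quadratic extension is $\Q_2(\sqrt{-3})=\Q_2(\sqrt5)$. This is in fact the splitting field of $E'_f$ at $2$, since $c_4(E'_f)$ is a square and $-c_6(E'_f)=-2^6(A^3+54)\in5(\Q_2^\times)^2$.
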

    
    \begin{proof}
    This is equivalent to the two curves having the same bad primes, all of multiplicative reduction.
    The only primes $\ell$ dividing their respective discriminants are $2$ and those dividing $\Delta$, and they are all of multiplicative reduction: this is tested by checking that $\ell$ divides the discriminant of a minimal model over $\Q_l$ and the quantity $c_4$ is an $\ell$-adic unit \cite[VII, Proposition 5.1]{silv}. One computes, by using Tate's \textit{Formulaire} \cite{tate} 
    (or by recalling $j=\frac{c_4^3}{\Delta}$ and using \eqref{jcv}, \eqref{jprym} and \eqref{discs}):\[c_4(E_f)=16A(A^3-216),\quad c_4(E'_f)=16A^2.\]So the models are automatically minimal for $\ell>2$ and, for both curves, if $\ell$ divides both the discriminant and $c_4$ then clearly $\ell\in\{2,3\}$, so $\ell=2$ by the hypothesis on $A$. Notice that $2^4\mid c_4$, so the given models may not be minimal over $\Q_2$; still, $2$ is of multiplicative reduction for $E'_f$, as can be seen by computing $c_4=A^2$ for the model $$E'_f: Y^2+XY+Y=X^3+\frac{5-A}{4}X^2+\frac{1-A}{4}X-\frac{A+3}{16}$$obtained through the change of variables $x=4X+2, \ y=8Y+4X+4$, which is integral over $\Q_2$ since $16\mid A+3$ (and minimal, since $16\nmid c_4$). The same holds for $E_f$, since $c_4=A(A^3-216)$ for the model $$E_f: Y^2+XY+Y=X^3+\frac{1-A^2}{2}X^2+\frac{A^4-4A^2+72A-5}{16}X+\frac{A^4-8A^3-2A^2+72A-447}{64}$$ obtained via $x=4X+1, \ y=8Y+4X+4$---again integral and minimal over $\Q_2$.
\end{proof}
\begin{reml}
    More generally, $c_4(E_f)$ is an $\ell$-adic unit for all bad primes $\ell>3$ under the weaker hypotheses that $(A,B)=1$.\label{c4unit}
\end{reml}

\begin{reml}\label{cpcurve}
    For $f$ as in Lemma \ref{semistable} and with $v_2(A+3)>6$, numerical experiments hint to $\e_f$ having conductor $N$ too. We can show this as follows: $\e_f$ has discriminant $-16\Delta$, so bad primes $\ell>2$ divide $A^3+27$, and are all of multiplicative reduction since $c_4(\e_f)=-48A$ and $(A,3)=1$. Therefore, we just need to show that $v_2(N(\e_f))=1$. We also compute $c_6(\e_f)=2^63^3$ with Tate's \textit{Formulaire} \cite{tate}, and hence $2^{18}\mid c_4^3-c_6^2=-2^{12}3^3(A^3+27)$. By \cite[2.1]{stein} (for $p=2$) and our hypothesis on $v_2(A+3)$, $\e_f$ is not minimal for $l=2$, and a minimal model has \[
    c_4(\e_f^{\min})=\frac{c_4}{2^4},\quad
    c_6(\e_f^{\min})=-\frac{c_6}{2^6},\quad
    \Delta(\e_f^{\min})=\frac{-\Delta}{2^{12}}.
    \] Using \cite[2.1]{stein}, we find that $v_2(N(\e_f))=1\iff 2\mid\Delta(\e_f^{\min}),$ which is precisely ensured by $v_2(A+3)>6$.
\end{reml}

In light of Remark \ref{cpcurve}, it is natural to ask whether the $2$-congruences between $\e_f$ and each of $E_f,E'_f$ also induce isomorphisms of $2$-Selmer groups. This turns out to indeed be the case, but only after a quadratic twist. Recall that $F_f$ denotes the $(-1)$-twist of $E'_f$.
\begin{prop}\label{2self'}
Let $f(X)=X^4+2AX^2-8X$, where $A=2^hd-3$ with $h\ge7$ odd and $3\nmid d\equiv1\pmod4$. We have  $$\mathrm{Sel}_2(E^{(-1)}_f/\Q)\simeq\mathrm{Sel}_2(\e^{(-1)}_f/\Q)\simeq\mathrm{Sel}_2(F_f/\Q).$$
\end{prop}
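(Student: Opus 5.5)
We will prove both isomorphisms the same way as Theorem \ref{companions}, by a local comparison of Kummer images. We use the $2$-congruences $\e_f[2]\simeq E_f[2]$ and $\e_f[2]\simeq E'_f[2]$ from the paragraph before Lemma \ref{compatibility}. By Remark \ref{Q-comp} these agree with $\iota|_{E_f[2]}$. Twisting by the quadratic character of $\Q(i)$ preserves them, since $-1$ acts trivially on $2$-torsion. So we obtain $G_\Q$-isomorphisms $E_f^{(-1)}[2]\simeq\e_f^{(-1)}[2]\simeq F_f[2]$, and all three $2$-Selmer groups sit inside the common group $H^1(\Q,M)$, where $M$ is this module. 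It then suffices to show that, for every place $v$, the local Kummer images $\kappa_{v}(C(\Q_v)/2C(\Q_v))$ coincide inside $H^1(\Q_v,M)$ for $C\in\{E_f^{(-1)},\e_f^{(-1)},F_f\}$. We do this by splitting places into classes as in the proof of \cite[Theorem 3.1]{MR}.

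\emph{Archimedean place and good places.} At $v=\infty$ the local group is killed by $2$-divisibility considerations. Since $\e_f[2](\R)$ has the same size for all three curves (all have the same real root structure via the congruence), and each image has order $|C(\R)[2]|/2$ or is trivial, the local conditions agree. We will check the sign of the discriminants $16\Delta$, $16B^2\Delta$, $16B^2\Delta^3$, which all have the sign of $\Delta$.

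At odd primes $\ell\nmid\Delta$, all three curves have good reduction. Both local images are the unramified classes $H^1_{\mathrm{ur}}(\Q_\ell,M)$, so they coincide.

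\emph{Bad odd primes.} At odd primes $\ell\mid\Delta$, i.e.\ $\ell\mid A^3+27$, Lemma \ref{semistable} and Remark \ref{cpcurve} give multiplicative reduction for all three untwisted curves. The twist by $-1$ is non-split exactly when $\ell\equiv3\pmod4$. In the split case we use that the canonical subgroups are identified, by the second part of Lemma \ref{preserve}, whose proof only needs coalescence of roots and so applies for our $A$. Then \cite[Lemma 5.6]{MR}, in the form of Lemma \ref{5.6rep} with $k=1$, identifies the Kummer images, provided the Tate parameters have the same norm behaviour. For this we compare $q$-parameters through $v_\ell(j)$. By \eqref{jb=2}, $v_\ell(j(E_f))=-3v_\ell(A^3+27)$, $v_\ell(j(E'_f))=-v_\ell(A^3+27)$ and $v_\ell(j(\e_f))=-v_\ell(A^3+27)$, so all are odd multiples of $v_\ell(\Delta)$ up to parity. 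The parity of $v_\ell(q)$ is what governs whether the Kummer image is the unramified line or the canonical one. We will argue that the three parities agree, since they differ by the factor $3$.

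\emph{The prime $2$: main obstacle.} This is where the hypotheses $h\ge7$ odd and $d\equiv1\pmod4$ enter, and where the $(-1)$-twist is genuinely needed. After twisting, the reduction at $2$ is non-split multiplicative with splitting field $\Q_2(i)$. We plan to apply Lemma \ref{5.6rep} with $L=\Q_2(i)$, which requires showing that the Tate parameters of the three twisted curves are non-norms from $\Q_2(i)$. We will compute $v_2(q)$ from $v_2(j)$:
\begin{itemize}
\item $v_2(j(E_f))=-3(h+\dots)$,
\item $v_2(j(E'_f))=4-v_2(A^3+27)$,
\item $v_2(j(\e_f))$ is obtained from the minimal model in Remark \ref{cpcurve}.
\end{itemize}
We then determine the unit parts of $q$ modulo squares using Lemma \ref{ladic}, with $A+3=2^hd$. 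The odd parity of $h$ makes $v_2(q)$ odd in each case, and $d\equiv1\pmod4$ fixes the unit part. An element $2^{\mathrm{odd}}u$ is a norm from $\Q_2(i)$ iff $2u$ is, i.e.\ iff $u\equiv1\pmod4$ up to the norm group $\langle2,5\rangle$. The delicate step is to expand $q=1/j+744/j^2+\cdots$ far enough 2-adically to pin down the unit part modulo $8$. We expect this to be the hardest part; we will do it first for $F_f$, using the explicit $2$-minimal model from Lemma \ref{semistable}. Once the non-norm condition holds for all three curves, the canonical $2$-subgroups are matched by Lemma \ref{preserve}, and Lemma \ref{5.6rep} concludes at $v=2$, hence globally.
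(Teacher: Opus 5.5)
Your place-by-place framework is right, but the argument fails at three places: $\infty$, $2$, and the odd bad primes.

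\textbf{The real place.} At $v=\infty$, equal size of the Kummer images proves nothing. When $f'$ has three real roots, $H^1(\R,M)\cong(\Z/2)^2$, and each real Kummer image is one of its three lines. It is the line spanned by the class of the $2$-torsion point at the smallest root, since its $(x-T)$ signature is $(+,-,-)$. So you must show that $-X_i\mapsto -X_i^2$ sends the smallest root $-X_3$ of $\e_f^{(-1)}$ to the smallest root of $F_f$, i.e.\ $X_3^2>X_1^2,X_2^2$. This follows from $X_1<X_2<0<X_3=-X_1-X_2$. This is exactly where the $(-1)$-twist is needed, at $\infty$ rather than at $2$. For the untwisted curves, the smallest root $X_1$ of $\e_f$ goes to $X_1^2$, which is not the smallest of the $X_i^2$, so in the three-real-root case their real images genuinely differ. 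For $E_f^{(-1)}$ you can skip the direct comparison: Theorem \ref{companions} already gives equality of local Kummer images with $F_f$ at every place, for every twist.

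\textbf{The prime $2$.} You have the wrong splitting field. We have $-c_6(\e_f^{(-1)})=2^63^3$ and $-c_6(F_f)=2^6(A^3+54)$ with $(A^3+54)/27\equiv1\pmod 8$, so the common splitting field is $\Q_2(\sqrt3)$, not $\Q_2(i)$. With your field the plan fails outright. Since $1+744/j+\cdots\in1+8\Z_2$, no delicate expansion is needed, and $q\in-6d\,(\Q_2^\times)^2$. Writing $-6d=2\cdot(-3d)$ with $-3d\equiv1\pmod4$, your own criterion makes $q$ a norm from $\Q_2(i)$, so Lemma \ref{5.6rep} would not apply. With $\Q_2(\sqrt3)$ one gets $(-6d,3)_2=-(d,3)_2=-1$ precisely because $d\equiv1\pmod4$; this is how that hypothesis enters.

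\textbf{Odd bad primes.} First, you must verify that the three curves share the splitting field $\Q_\ell(\sqrt3)$. Your claim ``non-split iff $\ell\equiv3\pmod4$'' is false for odd $\ell\mid d$ with $\ell\equiv2\pmod3$. The split case then follows from \cite[Lemma 5.4]{MR}, not Lemma 5.6. Second, and more seriously, nothing in the hypotheses forces $v_\ell(A^3+27)$ to be odd. When it is even, both Tate parameters are norms from the unramified $L$, a case Lemma \ref{5.6rep} does not cover. ``Same norm behaviour'' is not enough there: you must match the extra generators of \cite[Lemma 5.6(ii)]{MR}. One way is to note that $q/q'=u^2$ with $u$ a unit, which is therefore a square in $L$.
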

\begin{proof}
    Let us work with $\e_f^{(-1)}$ and $F_f$: the proof will then extend to $E_f^{(-1)}$ by the equality of local $2$-Kummer images provided by Theorem \ref{companions}, which has weaker hypotheses. Looking at the hypotheses of \cite[Theorem 6.1]{MR}, we see that we are lacking conditions (i) and (iii), with (i) being replaced by the $2$-congruence $-X_i\mapsto-X_i^2$. Lemma \ref{preserve} shows that the $2$-congruence induces an identification of the canonical $2$-subgroups.

    Now go through the proof of Theorem 6.1: their \textit{Case 2} and \textit{Case 5} are empty for us, and \textit{Case 3} can be followed verbatim since it only requires the $2$-congruence. Therefore, we are left with proving that the $\ell$-adic $2$-Kummer images coincide for $\ell=\infty$ (their \textit{Case 1}) and $\ell$ of potentially multiplicative reduction (their \textit{Case 4}).
    
    \textit{Bad primes:} we first prove directly that our curves have the same splitting field $\Q_\ell(\sqrt{-c_6})$ over $\Q_\ell$, replacing Mazur and Rubin's Lemma 5.7(i): equality of splitting fields is equivalent to$$\gamma_f:=\frac{c_6(F_f)}{c_6(\e^{(-1)}_f)}\in(\Q_\ell^\times)^2.$$We have $c_6(F_f)=-2^6(A^3+54)$ and $c_6(\e_f^{(-1)})=-2^63^3$, so $\gamma_f=\frac1{27}(A^3+54)$, which is a square by Hensel lifting since $A^3+27=0\pmod\ell$ (or, for $\ell=2$, because $(A^3+54)/27\equiv1\pmod8$). This automatically settles the case of split multiplicative reduction, where the proof simply uses identification of $2$-canonical subgroups on top of equality of splitting field to apply \cite[Lemma 5.4]{MR}.

    For the nonsplit case, following Mazur and Rubin's notation we first have to show that either both Tate parameters are norms from the splitting extension, or none is.
    Let $q$ and $q'$ be the Tate parameters of $\e^{(-1)}_f$ and $F_f$; the usual modular identity for the $j$-invariant gives \[j=\frac1q+744+196884q+O(q^2), \quad j(F_f)=\frac{1}{q'}+744+196884q'+O((q')^2),\]so $q=j^{-1}(1+\frac{744}{j}+\frac{750420}{j^2}+...)$ and the same for $q'$. The factors in parentheses are in $1+\ell\Z_\ell$ since $v_\ell(j), \ v_\ell(j(F_f))<0$, and for $\ell=2$ they are in $1+8\Z_2$ in virtue of our hypothesis on $v_2(A+3)$ and the fact that $4\mid744$ and $2\mid750420$. Therefore, by Lemma \ref{ladic}, they are squares in $\Q_\ell^\times$. Using \eqref{jb=2} we get $$\frac{q}{q'}\in\frac{j(F_f)}{j}(\Q_\ell^\times)^2=-\frac{A^3}{108}(\Q_\ell^\times)^2=(\Q_\ell^\times)^2,$$ either by $A^3+27=0$ for $\ell>2$ or $A\equiv5\pmod8$ for $\ell=2$. 
    Now, set $q/q'=u^2, u\in \Q_\ell$ and let $L=\Q_\ell(\sqrt{3})$ be the common splitting
    field of $\e^{(-1)}_f$ and $F_f$. Then, $q$ is a norm from $L^\times$ if and only if $q'$ is, since $u^2=N_{L/\Q_\ell}(u)$. Therefore, if they are not norms, we are done by Lemma \ref{5.6rep} for $k=1$. Moreover, we have $j^{-1}\in3A(A^3+27)(\Q^\times)^2\subset-6d(\Q_2^\times)^2$ by \eqref{jb=2}, so the hypothesis $d\equiv1\pmod4$ is equivalent to $j$---and hence $q$---not being a norm from $\Q_2(\sqrt3)$.
    
    Now suppose that $q, \ q'$ are norms, so in particular $\ell>2$. Then, the splitting extension $L/\Q_\ell$ is unramified and hence $v_\ell(q)=-v_\ell(j)\equiv0\pmod2$, so $v_\ell(A^3+27)$ is even. Choose
    $\beta\in L^\times$ with $N_{L/\Q_\ell}(\beta)=q$ and put $\beta'=\beta/u$, so that $N_{L/\Q_\ell}(\beta')=q'$. The additional generator of the $2$-Kummer image provided by \cite[Lemma 5.6(ii)]{MR} is built by choosing square roots $\alpha^2=\beta$ and $\alpha'^2=\beta'$ in $\bar\Q_\ell$. So, for the local $2$-Kummer images to coincide, it is enough that $u\in (L^{\times})^2$: in that case $\alpha/\alpha'\in L$, so the additional generator for $\e_f^{(-1)}$ would differ from that for $F_f$ by the generator of the common subgroup of \cite[Lemma 5.6(i)]{MR} obtained by uniformizing $\sigma\to\left(\frac{(\alpha\alpha')^{\sigma\psi(\sigma)}}{\alpha\alpha'}\right)$ for either curve. So, since $L$ is unramified, we just need to check that the valuation of $u$ is even. Notice that $v_\ell(q)=-v_\ell(1/j)=v_\ell(A^3+27)$ and, similarly, $v_\ell(q')=v_\ell(A^3+27)$; since $v_\ell(u)=\frac12v_\ell(q/q')$, we are done. 

    \textit{$\ell=\infty$:} we have to show equality of the $2$-Kummer images $$C^{(-1)}(\R)/2C^{(-1)}(\R)\to H^1(G_{\R},C^{(-1)})$$for $C=\e_f, \ E'_f$, under the identification induced by their $2$-congruence. The number of real components of $\e^{(-1)}_f$ is one if $f'$ has one real root and two otherwise.
    The number of real components of $E^{(-1)}_f$ and $F_f$ is the same as for $\e^{(-1)}_f$ in virtue of the $2$-congruence. If it is one we are done, as multiplication-by-two is surjective on such components, so the $2$-Kummer image is trivial. Now assume that $f'$ has three real roots $X_1<X_2<X_3$, so we have \begin{align}
    X_1+X_2+X_3=0,\quad  X_1X_2X_3=2.\label{roots}\end{align}
    Standard $2$-descent gives the $(x-T)$ map \cite[\S 5]{PS}$$\phi_f:\e_f^{(-1)}(\R)/2\e_f^{(-1)}(\R)\to (\R^\times/(\R^\times)^2)^3, \quad [(X,Y)]\mapsto ([X+X_1],[X+X_2],[X+X_3]),$$ so the real $2$-Kummer image is generated by a point in the bounded component $-X_3\le X\le -X_1$, having signature $\mathrm{sgn}(\phi_f([(X,Y)]))=(-,-,+)$. As the twisted $2$-congruence identifies $-X_i$ with $-X_i^2$, we need to show that we get the same signature for the real $2$-Kummer image of $$F_f(\R)/2F_f(\R)\to (\R^\times/(\R^\times)^2)^3,$$that is, that $X_3^2>X_1^2,X_2^2$. The two identities in \eqref{roots} imply $X_2<0<X_3$ and $X_3=|X_1+X_2|$, so we are done.
\end{proof}

\begin{reml}
    The reason why twisting by $-1$ is useful is in proving equality of real $2$-Kummer images, where the argument breaks down for the correspondence $X_i\to X_i^2$. Moreover, the twist with the marked nontorsion point is precisely $F_f$, making the statement of Proposition \ref{2self'} useful for exhibiting nontrivial Tate-Shafarevich groups. Still, the same techniques prove the $2$-Selmer isomorphism even for the untwisted curves (and the same proof at $\ell=2$ works even without the congruence condition hypothesis on $d=2^{-v_2(A+3)}(A+3)$, since the splitting extension is unramified), under the additional hypothesis that $A>-3$, which ensures that there is a single real component. We remark that numerical evidence makes it likely that the hypotheses of odd exponent at $2$ and coprimality with $3$ can also be removed, but on the other hand the stronger claim of full $2$-Selmer companionship with $\e_f$ can be easily disproved experimentally.
\end{reml}

\subsection{Tate-Shafarevich groups and visibility}
Recall that, for any elliptic curve $E$ over a number field $K$ and any positive integer $n$, we have the exact sequence \begin{align}
    0\to E(K)/nE(K)\to\mathrm{Sel}_n(E/K)\to\Sha(E/K)[n]\to0,\label{selsha}
\end{align}where the Tate-Shafarevich group $\Sha(E/K)$ is defined as\[\ker\left(H^1(K,E)\xrightarrow{\mathrm{res}_v}\prod_v H^1(K_v,E)\right).\]Therefore, if two elliptic curves with isomorphic $2$-Selmer groups have trivial $2$-torsion and different ranks, one of them is forced to have nontrivial Tate-Shafarevich group. Set $r=\mathrm{rk}_\Q(F_f)$ and take all ranks over $\Q$.
\begin{cor}
    Let $f(X)=X^4+2AX^2-8X$ with $A\in\Z$ as in the statement of Theorem \ref{companions}, and let $d\in\Q^\times/(\Q^\times)^2$. We have \begin{align}\label{rk+sha}
        \mathrm{rk}(E^d_f)+\dim_{\F_2}\Sha(E^d_f/\Q)[2]=\mathrm{rk}((E'_f)^d)+\dim_{\F_2}\Sha((E'_f)^d/\Q)[2].
    \end{align}Moreover, if $A$ is as in the statement of Proposition \ref{2self'}, the integer
    \begin{align*}
        \mathrm{rk}(C^{(-1)})+\dim_{\F_2}\Sha(C^{(-1)})[2]
    \end{align*}is the same for $C=\e_f, E_f, E'_f$.
    In particular, for $C=E_f$ and $A$ as in the statement of Theorem \ref{companions}, and for $C=\e_f$ and $A$ as in the statement of Proposition \ref{2self'}, if $\mathrm{rk}(C^{(-1)})=0$ we have\begin{align}\label{sharank}
        \frac{|\Sha(C^{(-1)}/\Q)[2]|}{|\Sha(F_f/\Q)[2]|}\ge2.
    \end{align}\label{sha}
\end{cor}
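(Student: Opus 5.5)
The plan is to read everything off the exact sequence \eqref{selsha} with $n=2$, combined with the Selmer isomorphisms already proved. First we record that all the curves in question have trivial rational $2$-torsion. The $2$-torsion of $E_f$, $E'_f$ and $\e_f$ (and of any quadratic twist, since twisting does not change the Galois module $E[2]$) is given by the Weierstrass roots $-f(X_i)$, $X_i^2$, $X_i$ respectively. Each of these is rational iff $X_i$ is, and $f'(X)/4=X^3+AX-2$ has no rational root by the rational root theorem: only $\pm1,\pm2$ could occur, and these are excluded since $|A|>7$. Hence $E^d(\Q)/2E^d(\Q)\simeq(\Z/2)^{\mathrm{rk}(E^d)}$ for each of these curves $E$, and \eqref{selsha} gives
\[\dim_{\F_2}\mathrm{Sel}_2(E^d/\Q)=\mathrm{rk}(E^d)+\dim_{\F_2}\Sha(E^d/\Q)[2].\]

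For \eqref{rk+sha}, Theorem \ref{companions} says that $E_f$ and $E'_f$ are $2$-Selmer companions over $\Q$. By definition, their twists by the same quadratic character $d$ have isomorphic $2$-Selmer groups, and the displayed identity then yields \eqref{rk+sha}. For the second claim, Proposition \ref{2self'} gives $\mathrm{Sel}_2(E_f^{(-1)})\simeq\mathrm{Sel}_2(\e_f^{(-1)})\simeq\mathrm{Sel}_2(F_f)$ with $F_f=(E'_f)^{(-1)}$. Applying the same identity to each curve shows that $\mathrm{rk}+\dim\Sha[2]$ agrees for $C^{(-1)}$, $C\in\{\e_f,E_f,E'_f\}$. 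The hypotheses of Proposition \ref{2self'} imply $A\in-3+32\Z$, $A\ne -3$, so the $2$-torsion argument above still applies.

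For \eqref{sharank}, suppose $\mathrm{rk}(C^{(-1)})=0$. The key input is that $F_f$ has positive rank. By Proposition \ref{torsj}(iii), the point $Q_j=(0,u)$ is nontorsion unless $j\in\{-27648/11,55296/5\}$. So we need to check that these two $j$-invariants do not occur in our family. We expect this to be the only mildly delicate step. Since $u=A^3/B^2=A^3/4$, the exceptional values $u=-4,-8$ would require $A^3=-16$ or $-32$, which is impossible for integer $A$. Hence $r=\mathrm{rk}(F_f)\ge1$, and equating $\mathrm{rk}+\dim\Sha[2]$ for $C^{(-1)}$ and $F_f$ gives
\[\dim\Sha(C^{(-1)})[2]=r+\dim\Sha(F_f)[2]\ge 1+\dim\Sha(F_f)[2].\]
This yields the ratio bound $\ge 2^r\ge2$. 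For $C=E_f$ under the hypotheses of Theorem \ref{companions}, we use \eqref{rk+sha} with $d=-1$, which compares $E_f^{(-1)}$ with $(E'_f)^{(-1)}=F_f$ directly.
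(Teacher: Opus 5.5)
Your proposal is correct and follows the paper's proof: trivial rational $2$-torsion (the Galois modules are all isomorphic to $\e_f[2]$), the Kummer--Selmer--Sha exact sequence, the Selmer isomorphisms of Theorem \ref{companions} and Proposition \ref{2self'}, and $\mathrm{rk}(F_f)\ge1$ from Proposition \ref{torsj}(iii). The only difference is how you rule out the torsion cases: you observe that $u=A^3/4\notin\{-4,-8\}$ because $-16$ and $-32$ are not cubes, whereas the paper notes that $v_2(j(Z_f))\le1$; both work, and in either case $A$ odd gives $j\neq0$, so Proposition \ref{torsj}(iii) applies.
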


\begin{proof}
    By Theorem \ref{companions} we have $\mathrm{Sel}_2(E^d_f/\Q)\simeq\mathrm{Sel}_2((E'_f)^d/\Q)$ for any $d\in\Q^\times/(\Q^\times)^2$, so \eqref{rk+sha} follows, provided both curves have rational $2$-torsions of the same size; this is indeed the case: we have $E_f^{(-1)}(\Q)[2]=F_f(\Q)[2]=\{\O\}$ as both $2$-torsions are $G_{\Q}$-isomorphic to $\e_f[2]$, which we know has no finite rational point. The second assertion follows in the same way from Proposition \ref{2self'}. In virtue of either of our choices of $A$, $j(Z_f)=j(\e_f)$ is nonzero and has $2$-adic valuation $\le1$, so it does not belong to $\{0,-27648/11,55296/5\}$. Therefore, taking $d=-1$, the positivity of $r=\mathrm{rk}(F_f)$ ensured by Proposition \ref{torsj}(iii), together with our hypothesis on $\mathrm{rk}(C^{(-1)})$, implies\[\dim_{\F_2}\Sha(C^{(-1)}/\Q)[2]=r+\dim_{\F_2}\Sha(F_f/\Q)[2]\ge1.\]
\end{proof}
\begin{reml} 
    Since $F_f$ has a nontorsion point and, under the hypotheses of Theorem \ref{companions} (resp$.$ Proposition \ref{2self'}) we expect $r$ to have the same parity as $\mathrm{rk}(E_f^{(-1)})$ (resp$.$ $\mathrm{rk}(\e_f^{(-1)})$, it natural to ask if we can have, say, $\mathrm{rk}(E^{(-1)}_f)>\mathrm{rk}(F_f)$ for $f$ as in Theorem \ref{companions}. This can indeed happen, as PARI can verify for $A=-67363$, in which case $\mathrm{rk}(F_f)=1$ and $\mathrm{rk}(E_f^{(-1)})=3.$ 
\end{reml}

\begin{reml}\label{shaexp}
    The first small values of $A$ as in Theorem \ref{companions} for which $\mathrm{rk}(E^{(-1)}_f)=0$ are $$A=-163,-99,61,125,157,253.$$ For all of them we can check that $\mathrm{rk}_\Q(F_f)=2$ and $\Sha(E_f^{(-1)}/\Q)[2]\simeq(\Z/2\Z)^2$: for $A=-163$, the critical and Prym curves are not listed on the LMFDB, but we can certify this with the Sage code in \cite{github}; for $A=-99$, $E_f^{(-1)}$ has Cremona label 485136h2, while $F_f$ is 485136b1; for $A=61$, $E_f^{(-1)}$ is 56752e2 and $F_f$ is 56752i1; for $A=125$, $E_f^{(-1)}$ has Cremona label 244144a1, while $F_f$ is 244144b2; for $A=157$, $E_f^{(-1)}$ is 101840j2 and $F_f$ is 101840k1, while for $A=253,$ $E_f^{(-1)}$ is 144592p2 and $F_f$ is 144592a1.
\end{reml}
One can check on the respective LMFDB pages that the reported size of $\Sha_{\text{an}}(E_f^{(-1)}/\Q)$ is $16$ for all the values of $A$ considered in Remark \ref{shaexp}---except for $A=253$, for which it is $64$ (for $A=-163$, this can be certified with the code in \cite{github}). This implies (together with their reported ranks and Corollary \ref{sha}) that, on top of being nontrivial, $\Sha(E_f^{(-1)}/\Q)[2]$ is also $2$-divisible.
In light of this, one is led to ask whether, every time the $2$-Selmer companionship forces $\Sha(E_f^{(-1)}/\Q)[2]$ to be nontrivial, it also forces it to be $2$-divisible. The natural explanation compatible with the data is the following refined version of \eqref{rk+sha} for $d=-1$:
\begin{align}
    2\mathrm{rk}_{\Q}(E^{(-1)}_f)+\log_2|\Sha(E^{(-1)}_f/\Q)[4]|=2\mathrm{rk}_{\Q}(F_f)+\log_2|\Sha(F_f/\Q)[4]|.\label{rk+sha4}
\end{align}In other words, we are led to conjecture that, under the hypotheses of Theorem \ref{companions}, our $4$-congruence induces an isomorphism of $4$-Selmer groups between the $(-1)$-twists. Moreover, a search using the script in \cite{github} suggests that the weakest condition which picks up this isomorphism is $A\equiv-3\pmod{32}$ or $A\equiv45\pmod{64}$, which we have verified for $|A|$ up to more than $20000$:

\begin{con}\label{4selcon}
    If $-3\neq A\equiv-3\pmod{32}$ or $A\equiv45\pmod{64}$, we have $$\mathrm{Sel}_4(E^{(-1)}_f/\Q)\simeq\mathrm{Sel}_4(F_f/\Q).$$
\end{con}

\begin{reml}
The stronger claim of a $4$-Selmer companionship is falsified, even assuming natural hypotheses stronger than those of Proposition \ref{2self'}, by taking for example $A=125, \ d=443$.
Many of the possibilities for the ranks and $4$-torsions of Sha allowed by \eqref{rk+sha4} turn out to happen: for instance, for $A=3037\equiv-3\pmod{32}$ we have $\mathrm{rk}_{\Q}(E^{(-1)}_f)=0, \ \mathrm{rk}_{\Q}(F_f)=4$ and $\Sha(E_f^{(-1)}/\Q)[4]\simeq(\Z/4\Z)^4$.\end{reml}

\begin{reml}
    For $A=253=2^8-3$ it is reported that $F_f$ has rank $2$ and $\Sha(E^{(-1)}_f/\Q)$ has size $8^2,$ suggesting the existence of elements of order $8$ in the Tate-Shafarevich group of $E_f^{(-1)}$, and of an isomorphism $\mathrm{Sel}_8(E^{(-1)}_f/\Q)\simeq\mathrm{Sel}_8(F_f/\Q)$ (one could turn this suggestion into a formal implication by proving Conjecture \ref{4selcon}, or even just by dropping the odd-exponent hypothesis in the next Theorem \ref{4sel}). While the latter does not have to be induced by an $8$-congruence---let alone one lifting our $4$-congruence---it is natural to ask whether there is a trend of $\Sha(E_f^{(-1)}/\Q)$ having elements of exact order an increasingly larger power of $2$ as $v_2(A+3)$ grows. We currently lack sufficient numerical evidence to express a conjecture in this direction. We also remark that the existence of elements of exact order an arbitrarily large power of $2$ in the Tate-Shafarevich group of some rational elliptic curve has only been known since the recent breakthroughs of Smith \cite{Smith2017} on $2^{\infty}$-Selmer ranks in quadratic twist families.
\end{reml}

We are able to prove Conjecture \ref{4selcon} under slightly stronger assumptions on $A$, which still yield an infinite family:
\begin{thm}\label{4sel}
    Let $A\in(-3+32\Z)\setminus3\Z$ be such that $v_\ell(A^3+27)$ is odd for all $\ell\mid A^3+27$ and $(A+3)2^{-v_2(A+3)}\equiv1\pmod4$. Then, for $f(X)=X^4+2AX^2-8X,$ we have $$\mathrm{Sel}_4(E^{(-1)}_f/\Q)\simeq\mathrm{Sel}_4(F_f/\Q).$$
\end{thm}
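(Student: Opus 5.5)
The plan is to apply the Mazur--Rubin machinery at level $p^k=4$: we want the $4$-congruence $\iota$ of Lemma \ref{4-iso} (twisted by $-1$, which preserves $G_\Q$-equivariance since the twist acts by $\pm1$ on both sides) to identify the local $4$-Kummer images $\kappa_{C,4}(C(\Q_v)/4C(\Q_v))\subset H^1(\Q_v,C[4])$ for $C=E_f^{(-1)}$ and $C=F_f$ at every place $v$. Then $\iota$ induces an isomorphism of $4$-Selmer groups, as in the proof of \cite[Theorem 3.1]{MR}. We split the places into four classes: good places, places $\ell>2$ of potentially multiplicative reduction (those $\ell\mid A^3+27$, which by Lemma \ref{semistable} are exactly the multiplicative primes since $3\nmid A$), the place $2$, and $\ell=\infty$.

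At places of good reduction not above $2$, both local images are the unramified subgroup $H^1_{\mathrm{ur}}(\Q_\ell,C[4])$, and these agree under any Galois isomorphism. This is the standard case, with Case 3 of \cite[Theorem 6.1]{MR} followed verbatim. At multiplicative places, including $2$, Lemma \ref{preserve} shows that $\iota$ identifies the canonical order-$4$ subgroups. It then remains to compare splitting fields and the norm property of the Tate parameters, as in the proof of Proposition \ref{2self'}.

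\begin{itemize}
\item \emph{Splitting fields.} The ratio $c_6(F_f)/c_6(E_f^{(-1)})$ must be a square in $\Q_\ell$. We verify this by Hensel's lemma using $A^3\equiv-27$ modulo $\ell$, and modulo $8$ at $\ell=2$.
\item \emph{Split case.} If the reduction is split multiplicative, \cite[Lemma 5.4]{MR} concludes.
\item \emph{Non-split case.} The hypothesis that $v_\ell(A^3+27)$ is odd forces $v_\ell(q)$ to be odd. Since the splitting field is unramified for $\ell>2$, neither $q$ nor $q'$ is a norm. At $\ell=2$, the condition $(A+3)2^{-v_2(A+3)}\equiv1\pmod 4$ plays the same role as in Proposition \ref{2self'}. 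Lemma \ref{5.6rep} with $k=2$ then gives equality of the images.
\end{itemize}

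The oddness hypothesis is designed precisely to avoid the \emph{norm} subcase, which at level $4$ would require comparing fourth roots of $q/q'$.

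At $\ell=\infty$, the $4$-Kummer image is $C(\R)/4C(\R)=C(\R)/2C(\R)$, which is either trivial or generated by the image of a point on the bounded component. Its image in $H^1(\R,C[4])$ is determined by the $2$-torsion data already matched in Proposition \ref{2self'}, together with the fact that $\iota$ restricts compatibly on $C[2]$ (Lemma \ref{compatibility}, Remark \ref{Q-comp}). The signature argument with $X_3^2>X_1^2,X_2^2$ should carry over.

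The main obstacle is the place $2$. There we must check that $E_f^{(-1)}$ and $F_f$ have multiplicative reduction after the twist, possibly only potentially so. We also need the correction factors in the $q$-expansions to be fourth powers, not merely squares. By Lemma \ref{ladic} this requires them to lie in $1+16\Z_2$, which we extract from $v_2(A+3)\ge5$. Finally, we must verify that $\iota$ matches the canonical $4$-subgroups over $\Q_2$ and not just over $\Q_\ell$ for $\ell>2$. I would first compute $q/q'$ to higher $2$-adic precision using \eqref{jb=2}, and fall back on the explicit integral models of Lemma \ref{semistable} if needed.
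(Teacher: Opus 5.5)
Your treatment of the finite places matches the paper's proof. You follow Case 3 of \cite[Theorem 6.1]{MR} at good primes, use Lemma \ref{preserve} for the canonical $4$-subgroups (it already covers $\ell=2$), and use the common splitting field $\Q_\ell(\sqrt3)$ with \cite[Lemma 5.4]{MR} in the split case. In the non-split case the odd valuations, and at $2$ the condition on the odd part of $A+3$, make both Tate parameters non-norms, so Lemma \ref{5.6rep} applies with $k=2$. Your worry about fourth powers at $2$ is unnecessary: only the norm status of $q,q'$ is used, and norms from $L$ contain all squares.

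The genuine gap is at $\ell=\infty$. It is not true that the image of $C(\R)/4C(\R)$ in $H^1(\R,C[4])$ is determined by $2$-torsion data. Write $C(\C)=\C/(\Z+\Z\tau)$ with $\tau\in i\R$, and set $P=1/4$, $Q=\tau/4$; complex conjugation $\sigma$ fixes $P$ and negates $Q$.
\begin{itemize}
\item The coboundaries are $\{0,2Q\}$, so $H^1(\R,C[4])$ has order $4$, with classes represented by $0,Q,2P,2P+Q$.
\item The Kummer image is $\{0,[Q]\}$: for $R=\tau/8$ one gets $\sigma R-R=-Q$.
\item The classes $[Q]$ and $[2P+Q]$ have the same image in $H^1(\R,C[2])$.
\item The map $P\mapsto P$, $Q\mapsto 2P+Q$ is a $G_{\R}$-equivariant automorphism of $C[4]$. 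It preserves the Weil pairing and is the identity on $C[2]$, yet it moves the Kummer image.
\end{itemize}
In fact every $G_{\R}$-equivariant isomorphism of $4$-torsion automatically sends $2P=T_{\max}$ and $2Q=T_{\min}$ to their counterparts. These are twice the $\sigma$-fixed, respectively $\sigma$-anti-fixed, elements of order $4$; here $T_{\min},T_{\max}$ are the real $2$-torsion points with smallest and largest $x$-coordinate. So the signature check $X_3^2>X_1^2,X_2^2$ carries no information at level $4$. The paper's own argument at $\infty$ invokes \cite[Lemma 5.1]{MR} and then checks $f(X_2)>f(X_1),f(X_3)$, i.e.\ $T_{\max}\mapsto T'_{\max}$. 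As written, that is also a statement about $2$-torsion points, so following it does not close the gap either.

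What you actually need is that $\iota$ carries $\pm Q'$, the halves of $T'_{\min}$ with $x'<e'_{\min}$, to $\pm Q$, the halves of $T_{\min}$ with $x<e_{\min}$. The other two halves, $2P\pm Q$, have $x\in(e_{\mathrm{mid}},e_{\max})$. This can be read off the explicit lift described after Lemma \ref{compatibility}. By \eqref{alg.id}, $\iota$ matches a half of $(-X_i^2,0)\in F_f$ with $x$-coordinate $x'$ to a half of $(f(X_i),0)\in E^{(-1)}_f$ with $x$-coordinate $x$ satisfying $x-f(X_i)=(X_i-X_j)(X_i-X_k)(x'+X_i^2)$. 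This holds for the sign of the lift compatible with canonical subgroups, which is the one used in Lemma \ref{preserve}. With $X_1<X_2<0<X_3$, the index $i=3$ gives the smallest root of both $E_f^{(-1)}$ and $F_f$, again by \eqref{alg.id}. Since $(X_3-X_1)(X_3-X_2)>0$, the correct halves correspond and the real $4$-Kummer images agree. This sign computation on $4$-torsion points is the missing ingredient in your argument at infinity.
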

\begin{proof}
    We follow the proof structure of Proposition \ref{2self'}, still with reference to \cite[Theorem 6.1]{MR}, noting that we are again lacking hypotheses (i) and (iii) in their statement, with (i) being replaced by our twisted $4$-congruence (Lemma \ref{4-iso}). The latter maps the canonical $4$-subgroups \cite[Definition 2.4]{MR} to one another for every bad prime in virtue of Lemma \ref{preserve}. As in the proof of Proposition \ref{2self'}, \textit{Case 2} and \textit{Case 5} are empty for us (the former because we required $(A,3)=1$), and \textit{Case 3} is covered by the hypothesis we already have. So, we can focus on bad primes (\textit{Case 4}) and the place at infinity (\textit{Case 1}).

    \textit{Bad primes:} we already know that both curves have potentially multiplicative reduction, multiplicative at $\ell>2$, and that the splitting extensions $L=\Q_\ell(\sqrt3)$ agree for both curves. So, equality of $4$-Kummer images at split primes follows from \cite[Lemma 5.4]{MR}. Assume the reduction is not split. Recall the equation
    \[q=j^{-1}\left(1+\frac{744}{j}+...\right)\]for the Tate parameter of an elliptic curve with $j$-invariant $j$. The Tate parameter of $E_f^{(-1)}$ (resp. $F_f$) modulo $(\Q_\ell^\times)^8$ is just $j(E_f)^{-1}$ (resp. $j(E'_f)^{-1}$) by Lemma \ref{ladic}. This also holds for $\ell=2$, because the $2$-adic valuation of both $j$-invariants is at most $-3$ by \eqref{jb=2}, in virtue of our hypotheses. By the same equation, their ratio is \[\frac{(A^3-216)^3}{16A^3(A^3+27)^2}\in\frac{(A^3-216)^3}{A^3}\Q_\ell^2=3^{12}\Q_\ell^2,\] hence a square; the odd exponent hypothesis implies that both Tate parameters are not norms for $\ell>2$, since in this case $L/\Q_\ell$ is unramified. Notice that the additional congruence condition on the odd part of $A+3$ implies that the Tate parameters are not norms at $\ell=2$ as well, as in the proof of Proposition \ref{2self'}. So, we are done by Lemma \ref{5.6rep} for $k=2$, since our $4$-congruence preserves the canonical $4$-subgroups.

    \textit{$\ell=\infty$:} If $\e_f$ has just one real component, then so do $E_f^{(-1)}$ and $F_f$ by the $2$-congruences. As multiplication-by-$4$ is surjective on such a component, both $4$-Kummer images are trivial, and we are done. Assume that there are three real components and consider \cite[Lemma 5.1]{MR}. It tells us that the Kummer image is generated by the standard cocycles attached to the $4$-division of the rational $2$-torsion points. This cocycle is trivial for the point at infinity and for that in the unbounded real component, as multiplication-by-four is surjective there. For the two points in the bounded component the cocycles differ by a coboundary, so the Kummer image is generated by any of them. Therefore, we just need to show that our $4$-congruence sends bounded real component to bounded real component; in other words, we need to show that if $X_1<X_2<0<X_3$ are the three roots of $f'$, then $f(X_2)>f(X_1),f(X_3)$. This is because the latter are the Weierstrass roots of $E_f^{(-1)}$ by Lemma \ref{crit}, and for those of $F_f$ we have $-X_2^2>-X_1^2,-X_3^2$, since $X_3=-X_1-X_2$ by our normalization. As $f(X_i)=\frac14X_if'(X_i)+AX_i^2-6X_i=AX_i^2-6X_i$ and $A=X_3(X_1+X_2)+X_1X_2=-X_3^2+X_1X_2<0$, we are done.
\end{proof}

\begin{reml}\label{posden2}
    The set of integers $A$ for which the statement of Theorem \ref{4sel} holds has positive density in $\Z$. This follows by applying the following result of Hooley \cite{hoo}
    \begin{center}
        Let $g\in\Z[x]$ be a cubic polynomial with the property that for each prime $p$ there exists $n_p\in\Z$ such that $p^2\nmid g(n_p)$. Then, $g(n)$ is squarefree for a positive proportion of integers $n$.
    \end{center}to $g(x)=2^{-v_2(A+3)}(A^3+27)$ where, say, $A=32(3(4x+1)-2)-3$: as $g(0)=7\cdot109$, for all $p$ we can take $n_p=0$.
\end{reml}

In \cite{CM}, Cremona and Mazur introduce the concept of \textit{visibility} of elements in $H^1(G_K,E)\simeq\mathrm{WC}(E/K)$, showing how it is often the case that nontrivial elements of small order in the Tate-Shafarevich group of rational elliptic curves are \textit{visible} as curves in some abelian surface---usually the Jacobian of a genus $2$ curve obtained by gluing $E$ to another elliptic curve $F$ along the graph of a $n$-congruence $E[n]\simeq F[n]$. These ideas have been developed to show specific (in)visibility results for elements of small order in Sha, for instance in \cite{agaste}, \cite{brfi} and \cite{fi}. Here, we show that the isomorphisms of Selmer groups of Theorems \ref{companions} and \ref{4sel} lead to visibility of any nontrivial Sha elements \textit{explained} by $F_f(\Q)$. In the remainder of this section, let $C$ be either $E_f$ or $\e_f$, and let $J$ be $\jac(\c_f^{(-1)})$ if $C=E_f$ or the abelian surface obtained by glueing $F_f$ with $\e_f^{(-1)}$ along $-X_i\mapsto-X_i^2$, and then quotienting by the common $2$-torsion (see \cite[Visibility and congruence moduli]{CM}). Recall the notation $r:=\mathrm{rk}(F_f)$ and set $s=\mathrm{rk}(C^{(-1)})$.

\begin{prop}\label{visibility}
    At least $\max(0, \ r-s)$ independent generators of $\Sha(C^{(-1)}/\Q)[2^k]/\Sha(C^{(-1)}/\Q)[2^{k-1}]$ are visible in $J$ in all of the following cases:
    \begin{enumerate}[(i)]
        \item $A$ as in Theorem \ref{companions}, $C=E_f$, $k=1$;
        \item $A$ as in Proposition \ref{2self'}, $C=\e_f$, $k=1$;
        \item $A$ as in Theorem \ref{4sel}, $C=E_f$, $k=2$.
    \end{enumerate}
\end{prop}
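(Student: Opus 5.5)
The plan is to run the standard Cremona--Mazur visibility argument: points of $F_f(\Q)$ are pushed through the congruence into $J$, and the Selmer isomorphisms already proved turn their images into Sha classes of $C^{(-1)}$. Set $n=2^k$ and let $\iota:F_f[n]\simeq C^{(-1)}[n]$ be the relevant congruence. For (i) this is the twisted $2$-congruence, for (ii) the map $-X_i\mapsto -X_i^2$, and for (iii) the twisted $4$-congruence of Lemma \ref{4-iso}.

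Form $J=(F_f\times C^{(-1)})/\Gamma_\iota$, where $\Gamma_\iota$ is the graph of $\iota$ (for $k=1$ and $C=\e_f$ this is the glued surface in the definition of $J$). For $C=E_f$ we check that this quotient agrees with $\jac(\c_f^{(-1)})$ up to the relevant isogeny, by twisting \eqref{prym} and using the $(4,4)$-splitting. There is a short exact sequence
\[0\to C^{(-1)}\to J\to F_f/F_f[n]\simeq F_f\to 0,\]
where the last map is induced by $[n]$ after identifying $F_f/F_f[n]$ with $F_f$. Its connecting map $\delta:F_f(\Q)\to H^1(\Q,C^{(-1)})$ factors as
\[F_f(\Q)/nF_f(\Q)\xrightarrow{\kappa_{F_f,n}} H^1(\Q,F_f[n])\xrightarrow{\iota_*} H^1(\Q,C^{(-1)}[n])\to H^1(\Q,C^{(-1)})[n].\]
By construction every class in its image is visible in $J$, since it becomes trivial when pushed into $J$.

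Next we show that the image of $\delta$ lies in Sha. The Kummer image $\kappa_{F_f,n}(F_f(\Q))$ lies in $\mathrm{Sel}_n(F_f/\Q)$. The proofs of Theorem \ref{companions}, Proposition \ref{2self'} and Theorem \ref{4sel} show that $\iota_*$ identifies the local Kummer images at every place, so $\iota_*$ carries $\mathrm{Sel}_n(F_f/\Q)$ isomorphically onto $\mathrm{Sel}_n(C^{(-1)}/\Q)$. For Theorem \ref{companions} we use the $d=-1$ twist. Hence $\delta(F_f(\Q))\subset \Sha(C^{(-1)}/\Q)[n]$ by \eqref{selsha}.

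The counting step is where I expect the main difficulty. Let $V=\kappa(F_f(\Q)/nF_f(\Q))\subset\mathrm{Sel}_n$. The rational $2$-torsion of both curves is trivial, as in the proof of Corollary \ref{sha}, so $V\simeq (\Z/n)^r$ and $\kappa(C^{(-1)}(\Q)/n)\simeq(\Z/n)^s$. The image of $V$ in $\Sha(C^{(-1)})[n]$ is $\iota_*V$ modulo $\iota_*V\cap\kappa(C^{(-1)}(\Q)/n)$. This intersection has at most $s$ generators, so the quotient contains a subgroup $(\Z/n)^{r-s}$ (after elementary divisor theory). For $k=1$ this directly gives $r-s$ independent visible elements of $\Sha[2]$.

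For $k=2$ it remains to check that these $r-s$ generators have exact order $4$, that is, that they give independent elements of $\Sha[4]/\Sha[2]$. If a combination landed in $\Sha[2]$, then twice the corresponding point of $F_f(\Q)$ would map into $\kappa(C^{(-1)}(\Q)/4)$. Comparing with the compatible $2$-Selmer isomorphism, which is the reduction of $\iota$ mod $2$, should contradict the independence already established at $k=1$. I expect this step to need care with how $\mathrm{Sel}_4\to\mathrm{Sel}_2$ interacts with the two Kummer images, and I would handle it via the snake lemma on the multiplication-by-$2$ map $\mathrm{Sel}_4\to\mathrm{Sel}_2$.
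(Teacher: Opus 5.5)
Your proposal is correct and follows the paper's argument. Both use Cremona--Mazur visibility in the glued surface, with the Selmer isomorphisms of Theorem \ref{companions}, Proposition \ref{2self'} and Theorem \ref{4sel} placing $\iota_*\kappa_{F_f}(F_f(\Q)/2^kF_f(\Q))$ inside $\mathrm{Sel}_{2^k}(C^{(-1)}/\Q)$; you then count modulo $\kappa(C^{(-1)}(\Q)/2^k)$ more explicitly than the paper does. Your closing worry for $k=2$ is unnecessary: the elementary-divisor step already gives a subgroup $(\Z/4\Z)^{r-s}$ of $V/(V\cap\kappa(C^{(-1)}(\Q)/4))$, which injects into $\Sha(C^{(-1)}/\Q)[4]$, and the generators of such a subgroup are automatically independent in $\Sha[4]/\Sha[2]$, so no snake-lemma comparison with $\mathrm{Sel}_2$ is needed.
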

\begin{proof}
    Apply point b) in \cite[Visibility and congruence moduli]{CM} to $E=C^{(-1)}, \ B=F_f$ and $\iota_{-1}:C^{(-1)}[2^k]\simeq F_f[2^k]$ the $G_{\Q}$-equivariant isomorphism of $G_{\Q}$-stable subgroups induced by that of Lemma \ref{4-iso} (or $X_i\mapsto X_i^2$ if $C=\e_f$). Then, $\sigma\in\Sha(C^{(-1)}/\Q)$ is visible in $J$ if and only if $\iota_{-1}h$ maps to $0$ in $H^1(\Q,F_f)$ for any lift $h\in H^1(\Q,C^{(-1)}[2^k])$ of $\sigma$---that is, by the standard Kummer exact sequence, if and only if $\iota_{-1}h\in H^1(\Q,F_f[2^k])$ is in the Kummer image of $F_f(\Q)/2^kF_f(\Q)\simeq(\Z/2^k\Z)^r$. Since $h\in\mathrm{Sel}_{2^k}(C^{(-1)}/\Q)$ and the corresponding theorem in the statement shows that $\iota_{-1}$ induces an isomorphism $\mathrm{Sel}_{2^k}(C^{(-1)}/\Q)\simeq\mathrm{Sel}_{2^k}(F_f/\Q)$, we are done by \eqref{selsha}.
\end{proof}
We now give a parity result for the critical elliptic curves of Theorem \ref{4sel}.
\begin{lem}
    For $A$ as in Theorem \ref{4sel} with $128\mid A+3$, we have \begin{align}
        (-1)^{\mathrm{rk}(E_f^{(-1)})}=-\mathrm{sgn}(A)(-1)^{\omega_1(A^3+27)}\left(\frac{A}{3}\right),\label{rootcalc}
    \end{align}where $\omega_1(n)$ counts the number of prime factors of $n$ that are $1\pmod4$.\label{paritylem}
\end{lem}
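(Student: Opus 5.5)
The plan is to compute the global root number $w(E_f^{(-1)}/\Q)$ as a product of local root numbers and then pass from root number to rank parity. For the second step I would invoke the $2$-parity theorem of Dokchitser--Dokchitser, which gives $(-1)^{\dim_{\F_2}\mathrm{Sel}_{2^\infty}}=w$. Since the nondivisible part of $\Sha[2^\infty]$ has square order, this identifies $(-1)^{\mathrm{rk}}$ with $w$ whenever $\Sha(E_f^{(-1)}/\Q)[2^\infty]$ is finite. If the paper intends an unconditional statement about ranks, this finiteness is exactly the input I would need to flag. For the first step I write $w=-\prod_{\ell<\infty}w_\ell$, where the factor $-1$ is $w_\infty$, and use Lemma \ref{semistable}: $E_f$ is semistable, with bad primes $2$ and the primes $\ell\mid A^3+27$, all of multiplicative reduction.

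\textit{Odd primes.} For odd $\ell\nmid A^3+27$ the twist by $-1$ preserves good reduction, so $w_\ell=1$. For odd $\ell\mid A^3+27$ the twist is unramified at $\ell$, so $E_f^{(-1)}$ still has multiplicative reduction there. Hence $w_\ell=-1$ if the reduction is split and $+1$ if it is nonsplit. By the proof of Proposition \ref{2self'} and Theorem \ref{4sel}, the splitting field of $E_f^{(-1)}$ over $\Q_\ell$ is $\Q_\ell(\sqrt3)$, so $w_\ell=-\left(\frac{3}{\ell}\right)$. Quadratic reciprocity gives $\left(\frac3\ell\right)=\left(\frac\ell3\right)(-1)^{(\ell-1)/2}$, and therefore
$$w_\ell=\left(\frac{\ell}{3}\right)\cdot\begin{cases}-1,&\ell\equiv1\pmod 4,\\ +1,&\ell\equiv3\pmod4.\end{cases}$$
Every $v_\ell(A^3+27)$ is odd by hypothesis, so the product of the $\left(\frac\ell3\right)$ over the odd primes dividing $A^3+27$ equals $\left(\frac{m}{3}\right)$ up to a sign. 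Here $m$ denotes the odd part of $|A^3+27|$. Since $m\equiv A^3\equiv A\pmod 3$ up to sign and the power of $2$, this product is $\left(\frac A3\right)$ corrected by $\left(\frac{2}{3}\right)^{v_2(A^3+27)}$ and by the sign of $A^3+27$, which is $\mathrm{sgn}(A)$ because $A\neq-3$. The remaining factors $-1$ from the primes $\ell\equiv1\pmod4$ produce exactly $(-1)^{\omega_1(A^3+27)}$.

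\textit{The prime $2$.} This is the step I expect to be the main obstacle. At $2$ the curve $E_f$ has multiplicative reduction, while the twist by $-1$ is ramified, so $E_f^{(-1)}$ has additive, potentially multiplicative reduction. I would use the standard formula $w_2=\chi(-1)$, or more precisely the formula of Rohrlich for quadratic twists of multiplicative curves in terms of the Hilbert symbol and the splitting character. I would evaluate it using the explicit $2$-adic model of Lemma \ref{semistable} and the $2$-adic information $q\equiv j^{-1}$ modulo $(\Q_2^\times)^8$ from the proof of Theorem \ref{4sel}. The hypothesis $128\mid A+3$ should be what fixes the class of $-c_6$ and of the parity $v_2(A^3+27)$ in $\Q_2^\times/(\Q_2^\times)^2$. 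With that class fixed, $w_2$ becomes a constant that cancels the stray factor $\left(\frac23\right)^{v_2(A^3+27)}$. In particular I would check that $v_2(A^3+27)=v_2(A+3)$ and that its parity, being odd, is exactly what makes the sign come out as $-\mathrm{sgn}(A)$.

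Finally I multiply all local factors and collect signs to obtain \eqref{rootcalc}, checking numerically against the data of Remark \ref{shaexp}.
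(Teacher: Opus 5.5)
Your proposal is correct, but it follows a different route from the paper.

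\textbf{How the paper argues.} The paper never computes $w(E_f^{(-1)})$ directly. It combines the $2$-parity theorem with the Selmer isomorphism of Proposition \ref{2self'} to get $w(E_f^{(-1)})=w(\e_f^{(-1)})$. It then computes the latter; there $c_6=-2^63^3$ is constant, so $w_2=-1$ and $w_\infty w_2=1$. Finally it passes from $w$ to the rank through the Dokchitser--Dokchitser theorem for curves with a $3$-isogeny. The detour through $\e_f^{(-1)}$ is exactly where $128\mid A+3$ enters: Proposition \ref{2self'} needs $h\ge 7$, and Remark \ref{cpcurve} needs $v_2(A+3)>6$.

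\textbf{What your route buys.} Your direct computation is essentially the alternative the paper sketches in the remark following Remark \ref{dichotomy}. It needs only Lemma \ref{semistable}, the splitting field $\Q_\ell(\sqrt3)$, and the odd-exponent hypothesis. So it proves the formula under weaker assumptions, and your guess that $128\mid A+3$ governs the $2$-adic factor is misplaced.

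\textbf{The $2$-adic factor.} The step you call the main obstacle follows at once from the formula you already quote. Since $E_f$ is multiplicative at $2$, $E_f^{(-1)}$ is the twist of the Tate curve by $\eta\chi_{-1}$ with $\eta$ unramified. Rohrlich then gives $w_2=\eta(-1)\chi_{-1}(-1)=(-1,-1)_2=-1$ for every admissible $A$.

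\textbf{Remaining sign bookkeeping.} Three small points:
\begin{enumerate}
\item The oddness of $v_2(A^3+27)=v_2(A+3)$ comes from the hypothesis of Theorem \ref{4sel} at $\ell=2$. Hence $\left(\frac{2}{3}\right)^{v_2(A^3+27)}=-1$, which cancels $w_2=-1$.
\item The sign of $A^3+27$ equals $\mathrm{sgn}(A)$ because $A\equiv-3\pmod{32}$ with $A\neq-3$ excludes $-3<A<0$.
\item With all exponents odd, the product of the $\left(\frac{\ell}{3}\right)$ is exactly $\left(\frac{m}{3}\right)$, not merely up to sign.
\end{enumerate}

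\textbf{The finiteness caveat.} Your caveat about finiteness of $\Sha$ is legitimate, and the paper's route does not remove it. The Dokchitser--Dokchitser isogeny theorem likewise controls only the parity of a $3^\infty$-Selmer corank. So the paper's identification of $w$ with $(-1)^{\mathrm{rk}}$ carries the same implicit assumption.
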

\begin{proof}
    By the $2$-parity theorem \cite[\S 1]{dok}, $w(E)=1$ exactly when $\dim_{\F_2}\mathrm{Sel}_2(E/\Q)-\dim_{\F_2}E(\Q)[2]$ is even. Our hypotheses make us fall under those of Proposition \ref{2self'}, which implies that the latter quantity is the same for $E=E_f^{(-1)}, \ \e_f^{(-1)}$ (as the $2$-torsions are trivial); so, we find $$w(E^{(-1)}_f)=w(\e^{(-1)}_f).$$ The parity theorem of Dokchitser-Dokchitser for curves with a $p$-isogeny that are semistable at $p$ \cite{dok2} implies that $w(E^{(-1)}_f)=(-1)^{\mathrm{rk}(E_f^{(-1)})}$; indeed, any twist of $E_f$ has a $3$-isogeny, and we showed in the proof of Lemma \ref{semistable} that the condition $3\nmid A$ forces good reduction at $3$ for $E_f^{(-1)}$.
    
    Let $c_6=c_6(\e_f^{(-1)})=-2^63^3$. We use the results of Dokchitser and Cowland-Kellock \cite[\S 2]{dok} to compute the root number $w(\e_f^{(-1)})$: it is the product of local root numbers $w_\ell(\e^{(-1)}_f):=w(\e^{(-1)}_f/\Q_\ell)$ for all places $\ell$ of bad reduction, including $\infty$, where it is always $-1$. They state that $w_2$ is $\left(\frac{-c'_6}{4}\right)$ with $c'_6=c_6/2^{v_2(c_6)}$. In our case, this is $\left(\frac{3^3}{4}\right)=-1$ by Remark \ref{cpcurve}. Therefore, $$w(\e^{(-1)}_f)=\prod_{\ell>2\text{ bad }}w_\ell(\e_f^{(-1)}).$$
    By their Theorem 2.3(ii)-(iii), this is the product of $-\left(\frac{3}{\ell}\right)$ over bad primes $\ell>2$, since they are all multiplicative and the splitting extension is $\Q_\ell(\sqrt{-c_6})=\Q_\ell(\sqrt3)$. By quadratic reciprocity, this is the same as $(-1)^{\frac{\ell+1}{2}}\left(\frac{\ell}{3}\right)$. Since $\ell\mid A^2-3A+9$ implies $(2A-3)^2\equiv-27\pmod\ell$, any such $\ell$ is $1\pmod3$ again by quadratic reciprocity, so \eqref{rootcalc} follows from to the odd-exponents hypothesis of Theorem \ref{4sel}.
\end{proof}
\begin{reml}\label{paritytrick}
    Since we know the parity conjecture for the curves $E_f$ of Theorem \ref{companions} with $3\nmid A$, we can strengthen Proposition \ref{visibility}(i) and (iii) to yield simultaneous visibility of $\mathrm{ev}(r-s)$ generators, where $\mathrm{ev}(n)$ is the smallest even integer not smaller than $n$.
\end{reml}
\begin{reml}\label{dichotomy}
    If one could show that\begin{align}
        (-1)^{\omega_1(A^3+27)}=-\left(\frac{A}{3}\right)\label{signcond}
    \end{align}for a positive proportion of $A$ as in Lemma \ref{paritylem}, it would follow from Remark \ref{posden2} that, for a positive proportion of $A\in\Z$, either $\Sha(E_f^{(-1)}/\Q)$ has an element of order $4$ that is visible in an abelian surface, or $\mathrm{rk}(E_f^{(-1)})\ge2$. It is not hard to see that the rank of $E_f^{(-1)}$ over $\Q(A)$ is $0$, so one should expect $100\%$ of integer specializations to have rank at most $1$ under the \textit{minimalist} philosophy. A positive density---or even infinitude---result for \eqref{signcond} would then be of interest, especially since Bruin and Fisher \cite{brfi} have shown that visibility in an abelian surface for elements of order $4$ in Sha is false in general (as opposed to the case of orders $2$ and $3$, for which it is automatic as shown by Cassels \cite{sha} and Mazur \cite{mazursha3}). Unfortunately, this appears out of reach.
\end{reml}
\begin{reml}
    We could have also deduced Lemma \ref{paritylem} from the fact that $c_6(E_f)\in -3\Q_\ell^2$ for $\ell>3$. Thanks to Remark \ref{c4unit}, this gives the more general formula 
    \begin{align}
        w(E_f^{(-1)})=\prod_{\ell\mid B}-\left(\frac{-1}{\ell}\right)\prod_{\ell\mid\Delta}-\left(\frac{3}{\ell}\right),
    \end{align}under the weaker hypotheses that $A$ is odd, coprime with $B$ and $3\nmid AB$.
\end{reml}

In their work \cite{CM}, Cremona and Mazur go on to examine specific cases of visibility in abelian surfaces, especially in the case where the $n$-congruent elliptic curves are semistable of the same conductor $N$ and the visibility occurs inside of $J_0(N)$. For this reason, taking $n=p$ a prime, they define the (necessary) condition of \textit{modular $p$-congruence}, that is---assuming the curves are \textit{optimal} in their respective isogeny classes---having $E[p]=F[p]$ inside $J_0(N)$. They then remark \cite[Proposition]{CM} that a sufficient condition for this to happen is (together with a mild congruence condition on $v_p((\Delta_E)$ if $p\mid N$, which in our case translates to $v_2(A+3)$ being odd) that the Fourier coefficients $a_m(E), \ a_m(F)$ of the cusp forms associated to $E$ and $F$ agree$\pmod p$ for \textit{all} $m>0$ (as opposed to just those with index coprime to $2N$, which follows just from the $2$-congruence). 

In our case, under the hypotheses of Lemma \ref{semistable}, we are dealing with semistable curves of the same conductor $N=N(A)$. Therefore, it is natural to ask if our $4$-congruence, or at least the $2$-congruence it induces, can be seen inside $J_0(N)$.

\begin{q}
    In the situation of Lemma \ref{semistable}, can the $4$-congruence $E_f[4]\simeq E'_f[4]$ be seen inside $J_0(N)$? In other words, does there exist an embedding $\jac(\c_f)\hookrightarrow J_0(N)$? 
\end{q}

Looking at the cusp forms associated to $E_f, E'_f$ for $A=-35, \ 29$ on the LMFDB, we see that the first twenty coefficients---including those divisible by bad primes---agree mod $2$ and even mod $4$, so the aforementioned proposition of Cremona and Mazur hints towards a possible affirmative answer. On the other hand, while $E'_f$ is $\Gamma_0(N)$-optimal in these two cases, $E_f$ is not: therefore, we have to take the optimal representative. Notice that composing the anti-isometry $E'_f[4]\simeq E_f[4]$ with the $3$-isogeny $E_f\to\tilde{E}_f$ yields an anti-isometry $E'_f[4]\simeq\tilde{E}_f[4]$. One may then conjecture that $\tilde{E}_f$ is always optimal, which is the case in all examples we could numerically verify.

\subsection{The subfamily $j=0$}

Over the special fiber $A=0\iff j(Z_f)=0$, the critical families $E_f, E'_f$ blow-up to one-parameter families, as already observed in Proposition \ref{torsj}(ii) for the Prym family. In this case, letting $f_{B}(X)=X^4-4BX$, the aforementioned proposition shows that $E'_B:=(E'_{f_B})^{(-1)}:y^2=x^3+B^2$ is the cubic twist by $B$ of $E'_1:y^2=x^3+1$, while $E_B:=E_{f_B}^{(-1)}:y^2=x^3+27B^4$ by \cite[Proof of Lemma 4.5]{nac}. Here, equality of 2-Selmer groups can still be shown quite easily when $B$ is odd:
\begin{lem}\label{sel}
    For all odd $B\in\Q^\times/(\Q^\times)^3$, we have $\mathrm{Sel}_2(E_B/\Q)\simeq\mathrm{Sel}_2(E'_B/\Q).$
\end{lem}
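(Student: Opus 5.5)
The plan is to run both $2$-Selmer groups through the same explicit $2$-descent and compare local conditions place by place. First I will identify the $2$-torsion modules. The Weierstrass roots of $E_B:y^2=x^3+27B^4$ are $-3B\beta\omega^i$, and those of $E'_B:y^2=x^3+B^2$ are $-B\beta^{-1}\omega^i$, where $\beta^3=B$ and $\omega$ is a primitive cube root of unity. Both therefore have $2$-torsion algebra $L=\Q[T]/(T^3-B)$. The map $-3B\beta\omega^i\mapsto -B\beta^{-1}\omega^{-i}$ is a $G_\Q$-equivariant bijection of roots, so it gives an isomorphism $E_B[2]\simeq E'_B[2]$. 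Via the $(x-T)$-map this identifies both $H^1(\Q,E[2])$ with $\ker\bigl(N:L^\times/(L^\times)^2\to\Q^\times/(\Q^\times)^2\bigr)$, as in \cite[\S5]{PS}.

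It then suffices to show that under this identification the local Kummer images agree at every place $v$ of $\Q$. At $v=\infty$ each cubic has a single real root, so each real locus has one component. Multiplication by $2$ is surjective there, and both images are trivial. At primes $\ell\nmid 6B$ both curves have good reduction, and both local images are the unramified subgroup $H^1_{\mathrm{ur}}(\Q_\ell,E[2])$, so they agree. This leaves $\ell\mid B$ with $\ell\ge5$, $\ell=3$, and $\ell=2$.

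For $\ell\mid B$ with $\ell\ge5$, I first change $B$ by a cube so that $v_\ell(B)\in\{1,2\}$. Then I use $|E(\Q_\ell)/2E(\Q_\ell)|=|E(\Q_\ell)[2]|$ for odd $\ell$. Both $E(\Q_\ell)[2]$ are trivial unless $B$ is a local cube, and I would treat the two cases separately. If $B\in(\Q_\ell^\times)^3$, both curves acquire a rational $2$-torsion point and the images have order $2$. I would compute the image of that $2$-torsion point under the $(x-T)$-map for each curve and check that the two classes match. Here the factor $3$ in $-3B\beta$ should be harmless: one can arrange $3\in(\Q_\ell^\times)^2$, or else absorb it, because $27=3^3$ and the discrepancy between the roots is the element $3\beta^2$ of $L$, whose norm is $27B^2\equiv 3$ modulo squares.

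At $\ell=2$ and $\ell=3$ I expect a direct computation. Since $B$ is odd, both $27B^4$ and $B^2$ are $2$-adic units, and over $\Q_2$ each curve is determined up to isomorphism by $B \bmod (\Q_2^\times)^3$ and by the class of $3$. The case $\ell=2$ is the main obstacle. There the image has size $2\,|E(\Q_2)[2]|$, and I would exhibit explicit generators, namely points with small $2$-adic $x$-coordinate, and compare their $(x-T)$-images in $(L\otimes\Q_2)^\times$ modulo squares. Oddness of $B$ reduces this to finitely many residue classes of $B$ modulo $8$ and modulo cubes. At $\ell=3$ I would likewise compare the two images after noting that $E_B\simeq y^2=x^3+B^4\cdot 3^3$ and $E'_B$ differ by the twist by $3\beta^2$ described above.

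If the direct $2$-adic check turns out messy, the fallback is to invoke the criterion of \cite[Theorem 3.1]{MR} locally. Alternatively, one can show that at $\ell=2$ and $\ell=3$ the two curves become isomorphic over $\Q_\ell$, via a change of variables compatible with the chosen identification $E_B[2]\simeq E'_B[2]$; this automatically makes the local images coincide.
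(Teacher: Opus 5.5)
Your framework is the paper's. The identification $x\mapsto 3B^2/x$ of Weierstrass roots is exactly the restriction of $\iota_B$ (for $B=1$ it matches $-3\omega$ with $-\omega^2$). Your direct arguments at $\infty$, at $\ell\nmid 6B$ and at $\ell\mid B$, $\ell\ge 5$ are correct, and more elementary than the paper's appeal to Mazur--Rubin's Cases 1--3. Note that once you normalize $v_\ell(B)\in\{1,2\}$, the sub-case $B\in(\Q_\ell^\times)^3$ is vacuous, so both images are trivial.

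The gap is that the only non-formal step, equality of the $2$-adic Kummer images, is announced but never carried out, and both of your fallbacks fail.
\begin{itemize}
\item Mazur--Rubin's criterion needs potentially multiplicative reduction at the primes above $2$ (canonical subgroups, Tate parameters). Here $j=0$, so the reduction at $2$ is potentially good and the criterion does not apply.
\item $E_B$ and $E'_B$ are not isomorphic over $\Q_2$ or $\Q_3$. The curves $y^2=x^3+a$ and $y^2=x^3+b$ are $\Q_\ell$-isomorphic iff $a/b\in(\Q_\ell^\times)^6$, and here $a/b=27B^2$. This has odd $3$-adic valuation. Over $\Q_2$, where odd $B$ is a cube, it would force $3\in(\Q_2^\times)^2$. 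In fact $E_B\simeq_{\Q_2}(E'_B)^{(3)}$.
\end{itemize}

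What is missing is the computation the paper performs.
\begin{itemize}
\item Since odd $2$-adic units are cubes, you can reduce to $y^2=x^3+1$ and $y^2=x^3+27$.
\item Each $2$-adic image has order $4$. Take $O$, the rational $2$-torsion point, and the points with $x=4,6$ (resp.\ $x=-7,1/4$). Their $(x-T)$-values in $\Q_2^\times/(\Q_2^\times)^2\times K^\times/(K^\times)^2$, with $K=\Q_2(\omega)$, are $(1,1),(3,\omega-1),(5,\omega+4),(7,\omega+6)$, resp.\ $(1,1),(3,3\omega-3),(5,1/4+3\omega),(7,3\omega-7)$.
\item Your identification acts by Galois conjugation on the $K$-coordinate. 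So you must check that $\frac{3\omega-3}{-\omega-2}$, $\frac{12\omega+1}{3-\omega}$ and $\frac{3\omega-7}{5-\omega}$ are squares in $K$.
\item They are: the first equals $-3\omega$, and the other two are congruent modulo $8$ to $\omega$ and $-3$ respectively. All of these are squares, since $-3=(1+2\omega)^2$.
\end{itemize}
This check has real content. Under the other $G_{\Q_2}$-equivariant identification (matching $-3\omega$ with $-\omega$), the quotient to test for the $2$-torsion point is $3$. Since $3\notin(K^\times)^2$, the two images would then differ.

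At $\ell=3$ you also owe a short argument, not the vague ``twist by $3\beta^2$''. The only nontrivial case is $B\in(\Q_3^\times)^3$, which again reduces to $B=1$. There both images have order $2$. They are generated by the rational $2$-torsion points, because there is no $\Q_3$-rational $4$-torsion ($\sqrt3\notin\Q_3$). The two images match because $27/3$ is a square and $-3\omega=\bigl((1+2\omega)\omega^2\bigr)^2$.
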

\begin{proof}
    Identify $H^1(\Q_\ell,E_B[2])\simeq H^1(\Q_\ell,E'_B[2])$ via the (twisted) $4$-isometry of Lemma \ref{4-iso}, which we call $\iota_B$. As before, it suffices to show that the local $2$-Kummer images $E_B(\Q_\ell)/2E_B(\Q_\ell)\subset H^1(\Q_\ell,E_B[2])$ and $E'_B(\Q_\ell)/2E'_B(\Q_\ell)\subset H^1(\Q_\ell,E'_B[2])$ coincide under this identification. We again follow \cite[Proof of Theorem 6.1]{MR}, keeping in mind that---differently from the situation of Proposition \ref{2self'} and Theorem \ref{4sel}---the only bad reduction of our two curves is now additive potentially good (as $j=0$), at $2$, $3$ and primes dividing $B$. So, we can follow verbatim the proof of Mazur and Rubin's \textit{Case 1} and \textit{Case 3}, as well as \textit{Case 2} for $\ell\neq2$, as none of these depends on the missing hypothesis of multiplicative reduction at $p=2$.
    
    For $\ell=2$, we perform $2$-descent for $E_B$ and $E'_B$ over $\Q_2$. As every odd (here we use the hypothesis on the parity of $B$) $2$-adic integer is a cube, over $\Q_2$ we have $E'_B\simeq E_1$ and $E_B\simeq E_1^{(3)},$ so it suffices to perform $2$-descent for \[E_1:y^2=(x+1)(x+\omega)(x+\omega^2),\quad E_1^{(3)}:y^2=(x+3)(x+3\omega)(x+3\omega^2),\]where $\omega$ is a primitive cube root of unity. Let $K=\Q_2(\omega)$ and fix an embedding of $K$ in $\C_2$. Since $\Q_2(E_1[2])=\Q_2(E_1^{(3)}[2])=K$, we have that, for both $C=E_1,E_1^{(3)}$, the Weil pairing induces an injective map \cite[1]{dss}\[w:H^1(\Q_2,C[2])\to H^1(\Q_2,\mu_2(\Q_2)\times\mu_2(K)).\]Composing $w$, to the left with the Kummer map $\kappa:C(\Q_2)/2C(\Q_2)\to H^1(\Q_2,C[2])$ and to the right with the Kummer isomorphism given by Hilbert's Theorem 90, gives a map \begin{align}
        C(\Q_2)/2C(\Q_2)\to\Q_2^\times/(\Q_2^\times)^2\times K^\times/(K^\times)^2\label{x-t}
    \end{align}usually referred to as a $(x-T)$ map \cite[\S 5]{PS}. Indeed, it can be described explicitly \cite[2]{dss}: in our case, it sends the class of $(x,y)\notin C[2]$ to $(x+1,x+\omega)$ for $C=E_1$, and to $(x+3,x+3\omega)$ for $C=E_1^{(3)}$; moreover, it sends the respective $\Q_2$-rational $2$-torsion points $(-1,0)$ and $(-3,0)$ to $\left((-\omega+1)(-\omega^2+1),-1+\omega\right)=(3,\omega-1)$ and $\left((-3\omega+3)(-3\omega^2+3),-3+3\omega\right)=(27,3\omega-3)\sim(3,3\omega-3)$, respectively. Recall that---modulo torsion---we have $C(\Q_2)\simeq\Z_2$ (see for example \cite[VIII]{silv}), so $C(\Q_2)/2C(\Q_2)\simeq(\Z/2)^2$; therefore, it is enough to compute \eqref{x-t} for four classes with distinct images, for each of the two curves.

    We find four such classes for $E_1$ and $E_1^{(3)}$ as follows: we already know that the origin maps to $(1,1)$, as well as where the rational $2$-torsion classes map. Taking $x=4$ and $x=6$ gives points in $E_1(\Q_2)$ whose respective classes map to $(5,\omega+4)$ and $(7,\omega+6)$, by Lemma \ref{ladic}; similarly, taking $x=-7$ and $x=\frac14$ gives two points in $E_1^{(3)}(\Q_2)$ whose respective classes map to $(-4,-7+3\omega)\sim(7,-7+3\omega)$ and $(\frac{13}{4},\frac14+3\omega)\sim(5,\frac14+3\omega)$. The four Kummer representatives obtained for each curve are pairwise distinct, as $1,3,5,7$ are distinct classes in $\Q_2^\times/(\Q_2^\times)^2$.

    It now remains to show that, under the $G_{\Q_2}$-equivariant isomorphism $\gamma:E_1[2]\simeq E^{(3)}_1[2]$ induced by any of the $\iota_B$, these two sets get mapped to one another. Observe that there are just two possible isomorphisms of $E_1[2]$ and $E_1^{(3)}[2]$ as Galois modules, determined by the image of $(-\omega,0)$: $(-3\omega,0)$ or $(-3\omega^2,0)$. Indeed, the two origins and the two rational $2$-torsion points must necessarily map to one another. Since our isomorphism comes from an anti-isometry on the $4$-torsions which is compatible with both (twisted) $2$-congruences by Lemma \ref{compatibility}, we can find in which case we are by checking their actions on Weierstrass points: for $\e_f^{(-1)}[2]\simeq F_f[2]$ we have that $-X_i$ is sent to $-X_i^2$, so in our case $-\omega$ is sent to $-\omega^2$; for $\e_f^{(-1)}[2]\simeq E_f^{(-1)}[2]$, $-X_i$ is sent to $f(X_i)$, so in our case $-\omega$ is sent to $f_1(\omega)=-3\omega$. For the resulting congruence $E'_1[2]\simeq E_1[2]$, $-\omega^2$ is then sent to $-3\omega$. By construction of the $(x-T)$ map, the automorphism of $\Q_2^\times/(\Q_2^\times)^2\times K^\times/(K^\times)^2$ induced by \eqref{x-t} is then just Galois conjugation on the second coordinate. Therefore, it suffices to check that \[\frac{3\omega-3}{-\omega-2},\quad\frac{12\omega+1}{3-\omega},\quad\frac{3\omega-7}{5-\omega}\]are squares in $K^\times$, which is indeed the case.
\end{proof}

\section{Applications}\label{6}
We now give some applications of our results. We start with a few consequences of the local investigation, and move to corollaries of Diophantine and geometric nature later in the section.

\subsection{Equicritical quartics once again}
For clarity reasons, in this subsection we drop the convention that our quartics are always normalized.
Looking at the LMFDB pages of various curves $E_f$ for $f$ as in Theorem \ref{companions}, one observes that these curves have not just one rational $3$-isogeny, but \textit{two.} In that case, Theorem \ref{t1} implies that $E_f$ is also a twist of the critical elliptic curve for a quartic $g$ linearly inequivalent to $f$; let us call such $E_f$ an \textit{equicritical} elliptic curve. We have stumbled once again upon the phenomenon of \textit{equicritical polynomials;} inequivalent equicritical quartics have been classified in \cite{nac}, and we indeed see how $f$ as in the statement of Theorem \ref{companions} is the same quartic as $f_t$ in \cite[Theorem 1]{nac}, for $t=-\frac A3$. 

Observe that, because of the transformation law for the discriminant mentioned in the proof of Lemma \ref{crit}, the quartic $g_t$ of \cite[Theorem 1]{nac}---which is equicritical to $f=f_t$---has critical elliptic curve $E_f^{(-3)}$.
\begin{es}\label{ex}
    Let $t=-1$ and $f=f_t, \ g=g_t$ with the notation of \cite[Theorem 1]{nac}. Then, $E_f=E_g^{(-3)}$ is the elliptic curve with Cremona label 54a1, while $E'_f$ is 864d1 and $(E'_g)^{(-3)}$ is 1728g1. Notice how the latter ones have different $j$-invariants, $-6$ and $-6^3$ respectively, and are both the only curve in their $\Q$-isogeny class. 
\end{es}
Order the set of rational quartics, and any of its subsets, by the maximum modulus of the coefficients. Theorem \ref{companions} (together with \cite[Theorem 1]{nac}) shows that, for a positive proportion of quartics $f\in\Z[X]$ admitting an equicritical twin (cf$.$ \S2.3), the curves $E_f$ and $E'_f$ are $2$-Selmer companions. As our Selmer-isomorphism tools only work inside the equicritical family, it is natural to ask if there exists a direct link between equicriticality and Selmer companionship. We have no clear answer to offer, but one potentially relevant observation is that, as remarked in \cite[\S5.1]{nac}, the equicritical elliptic family is the universal family over the modular curve $X_3$ studied by Rubin and Silverberg \cite[\S1.1]{rs}. They show that this universal family is the standard Hesse pencil, which with our parameter choice has the projective model\begin{align}X^3+Y^3+Z^3+AXYZ=0.\label{hesse}\end{align}Since Spencer's $3$-Selmer companion construction \cite{Sp} also involves Hessians, one may hope to tie these instances of preservation of local solubility for covers under various $n$-congruences, to some geometric properties of Hessian pencils in which the curves lie. Still, we remark that the role played by the Hessian in Spencer's family is conceptually orthogonal to that played in ours, since we find a component of the whole family of pairs inside the standard Hesse pencil, while he finds pairs by picking the second curve in the Hesse pencil of the first.

Conjecture \ref{4selcon} and the \textit{minimalist} philosophy imply the following statement:
\begin{con}
    For a subset $S$ of density $\frac3{128}$ inside that of integral quartics admitting an equicritical twin, the elliptic curves $E$ whose Weierstrass cubic vanishes at the critical values of some $f\in S$ have $\Sha(E/\Q)[4]\simeq(\Z/4\Z)^2$ visible in an abelian surface.
\end{con}
Indeed, such curves are precisely $E_f^{(-1)}$ for some $f(X)=X^4+2AX^2-8X$ with $A\in\Z\setminus\{-3\}$, in virtue of \cite[Theorem 1]{nac} and Lemma \ref{crit}. We can then apply Proposition \ref{visibility}(iii), and the parity of the rank of Sha follows from Remark \ref{paritytrick}. One can see that the classes of Conjecture \ref{4selcon} do not bias the root number thanks to Lemma \ref{paritylem}. 

Proving that infinitely many specializations of an elliptic fibration have rank equal to the generic rank is a famously hard problem. In the specific case of \eqref{hesse} this has been considered by Dofs \cite{dofs}: he identifies the set $\{A:A^2-3A+9\text{ is prime, }A+3\text{ has no prime factor }\equiv1\pmod3\}$ as giving rank $0$ and being ``probably infinite". Performing $3$-descent and appealing to \cite[Corollary 4.3]{cohenpaz}, one can weaken the condition to $A^3+27$ having no prime factor congruent to $\pm1\pmod{12}$, but even the infinitude of this set seems barely out of reach for current techniques. For instance, by black-boxing a version of \cite[Theorem 1.1]{wuxi} where $p$ is restricted to a congruence class, one can produce infinitely many $A$ such that $A^3+27$ has at most three such prime factors. 

\subsection{A variables separated equation}
Equations of the form $$f(X)=g(Y)$$are commonly referred to as \textit{variables separated equations,} and they have a rich literature, see \cite{fried}. In the case $f=g$, letting $d=\deg f$, for $d\ge3$ the equation\begin{align}
    \frac{f(X)-f(Y)}{X-Y}=0\label{varsep}
\end{align}generically describes a smooth, irreducible affine plane curve, whose normalization has genus $\frac{(d-2)(d-3)}{2}$. Equation \eqref{varsep} was studied by Avanzi and Zannier \cite{AZ} in the context of solutions in rational functions. Over number fields, if one is concerned about the infinitude of solutions, the only nontrivial generic case is that of quartics, by Faltings's Theorem. Here we settle it under the assumption that $\sqrt{-1}\in K$:
\begin{thm}Let $f\in K[X]$ be a good quartic and let $h_j(X)$ be any associated precritical sextic, where $j=j(Z_f)$.
    \begin{enumerate}[(i)]
        \item Assume $\sqrt{-1}\in K$. For all but finitely many values of $j\in K$, equation \eqref{varsep} has infinitely many solutions over $K$ for quartics with $j(Z_f)=j$.
        \item If $K=\Q$ and $j(Z_f)\neq0$, equation \eqref{varsep} has infinitely many solutions over $\Q(i)$, unless $f$ is linearly equivalent to $f_{-4}(X)=X^4-8X^2+16X$ or $f_{-8}(X)=X^4-4X^2+4X$, in which case there are, respectively, two and three solutions.
        \item There are only finitely many specializations $j\in K$ such that the Pell equation \begin{align}
            R^2-h_jS^2=1
        \end{align}has a solution $(R,S)\in \C[X]^2$, but infinitely many such specializations $j\in \C$.
    \end{enumerate}\label{t2}
\end{thm}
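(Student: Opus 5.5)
The starting observation is that the curve defined by \eqref{varsep} is, after normalization, exactly the Prym curve $E'_f$ of \eqref{selffp}. Its model in Lemma \ref{prymdata} is obtained from \eqref{sfp} via $v=X+Y$, $w=X-Y$, $W=vw$. Over $K\ni\sqrt{-1}$ the curves $E'_j$ and $F_j=E_j'^{(-1)}$ are isomorphic, so the plan is to transport the section $Q_j=(0,u)$ of Proposition \ref{torsj} to $E'_j$ and exploit its nontorsion. Infinitely many $K$-points on the normalization give infinitely many solutions of \eqref{varsep}: only finitely many points lie above the points at infinity, above the singular points of the plane model, or on the diagonal $X=Y$ (where $f'(X)=0$). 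The substitutions $v\mapsto X,Y$ are rational and defined over $K$, since $X=(v+w)/2$ with $w=W/v$.

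\textbf{Part (i).} The key step is to show that $Q_j$ is nontorsion for all but finitely many $j\in K$. Generic nontorsion was already noted, since $j(F_j)$ is not modular. For specializations I would argue with bounded torsion. By Merel, torsion orders over $K$ are bounded by some $N(K)$. The condition $nQ_j=0$ amounts to the vanishing of the division polynomial $\Psi_n(Q_j)$, a nonzero polynomial in $u$ because the section is generically nontorsion. Hence for each $n\le N(K)$ there are only finitely many $u$, and so finitely many $j$, since $j\mapsto u$ is a bijection. For the remaining $j$, $E'_j(K)$ is infinite. The cases $j=0$ and $j=1728$ are excluded finitely, consistent with the statement.

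\textbf{Part (ii).} Over $\Q(i)$, and for $\Q$-rational $j$, I would reduce torsion of $Q_j$ over $\Q(i)$ back to $\Q$. If $nQ_j=0$ over $\Q(i)$, then, since $Q_j$ is $\Q$-rational, it is torsion in $F_j(\Q)$. Proposition \ref{torsj}(iii) then gives $u=-4$ or $u=-8$, with $Q_j$ of order $5$ or $6$. Solving $A^3/B^2=u$ with $B$ normalized yields the representatives $f_{-4}$ and $f_{-8}$. For these two curves, which are 11a3 and 20a2 up to twist, I would check that $E'(\Q(i))$ is finite, using a rank computation of the curve and of its $(-1)$-twist over $\Q$. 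I would then count the points that give genuine solutions $X\ne Y$, i.e. discard the points at infinity and points with $v=0$, to obtain two and three solutions respectively. This finite check is the most delicate part: the torsion over $\Q(i)$ must be enumerated and the bad points removed carefully.

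\textbf{Part (iii).} I would use the standard criterion: $R^2-h_jS^2=1$ is solvable in $\C[X]$ if and only if the divisor $\infty^+-\infty^-$ is torsion in $\jac(\c_j)$. By Proposition \ref{torsj}(i) this divisor is $P_j=Q_j\in F_j$. The equation is therefore solvable exactly when $Q_j$ is torsion. Over $K$, part (i) shows this happens for finitely many $j$. Over $\C$, the condition $\Psi_n(Q_j)=0$ has roots $u$ for infinitely many $n$, and the displayed division values (e.g. $u+4$, $u+8$, $u^2+4u-16$, $\ldots$) produce infinitely many distinct $u$, giving infinitely many complex specializations. To make this rigorous one can invoke that a nonconstant, nontorsion section of a nonisotrivial family is torsion at infinitely many complex fibers: the torsion values are the zeros of distinct polynomials of unbounded degree, and there are infinitely many such zeros.
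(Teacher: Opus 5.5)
Your proposal is correct, and for (ii) and the finiteness half of (iii) it matches the paper's argument. The differences are in (i) and in the infinitude half of (iii).

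In (i), you get finiteness of the torsion specializations from Merel's uniform bound, combined with the fact that each $\Psi_n(Q_u)$ is a nonzero polynomial in $u$. The paper instead applies Silverman's specialization theorem to the generically nontorsion section $Q_j$ of Proposition \ref{torsj}(i). Your route makes the exceptional set explicit in principle: it consists of roots of finitely many $\Psi_n(Q_u)$, exactly parallel to the Mazur-based computation of Proposition \ref{torsj}(iii) that you reuse in (ii). Silverman's theorem needs no torsion bound at all.

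In (iii), the paper quotes Masser--Zannier's Pell-specific theorem after checking that $F_j$ is not isogenous to a constant curve. Your general fact about nontorsion sections of non-isotrivial families is the same input. You should, however, say why the family is non-isotrivial: $j(E'_f)=j^2/(4(j-1728))$ is nonconstant by \eqref{jprym}.

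Two cautions.

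First, the heuristic that division values of unbounded degree must have infinitely many zeros proves nothing, and here it visibly fails. The degrees of the displayed $\Psi_n(Q_j)$ are dominated by the factors $u^{3},u^{5},u^{8},\dots,u^{48}$, whose only zero is the excluded fiber $u=0$. The theorem you cite is what actually carries the argument.

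Second, in the count for (ii), do not impose $X\neq Y$. For $f_{-8}$ the affine $\Q(i)$-points of \eqref{varsep} are $\bigl(\tfrac{1+i}{2},\tfrac{1-i}{2}\bigr)$, $\bigl(\tfrac{1-i}{2},\tfrac{1+i}{2}\bigr)$ and $(1,1)$. The last lies on the diagonal, since $f_{-8}'(1)=0$. Removing exactly the three points at infinity of the plane cubic (the two at infinity of the quartic model plus the one with $v=0$) gives $5-3=2$ and $6-3=3$, as stated. Requiring $X\neq Y$ would give only two solutions for $f_{-8}$.
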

\begin{proof}
    We notice preliminarily that a solution to \eqref{varsep} over $L\supset K$ defines a point in $E'_f(L)$, and the converse is true with a finite number of exceptions (coming from the normalization). Therefore, there are infinitely many solutions over $L$ if and only if $\mathrm{rk}_L(E'_f)>0$. 
    \begin{enumerate}[(i)]
        \item We have $\mathrm{rk}_{K}(F_j)>0$ for all but finitely many $j$ by Proposition \ref{torsj}(i) and Silverman's Specialization Theorem \cite[Theorem 20.3]{silv}, and $F_j\simeq_K E'_j$ since $-1\in (K^\times)^2$, so the claim follows.
        \item If $j\neq0,$ again $\mathrm{rk}_{\Q}(F_j)>0$ unless $u\in\{-8,-4\}$ by Proposition \ref{torsj}(iii), and these values corresponds to quartics linearly equivalent to $f_{-4}$ and $f_{-8}$, given that $u=\frac{A^3}{B^2}$. Since \begin{align*}
            F_{-27648/11}(\Q(i))\simeq\Z/5\text{ and }F_{55296/5}(\Q(i))\simeq\Z/6,
        \end{align*}as can be checked on the respective LMFDB base-change pages (see also Remark \ref{1stcurve}), the finiteness claims follows. As three of the $\Q(i)$-rational points of $E'_j$ come from the normalization at infinity of the model \eqref{varsep}, which is otherwise smooth for good quartics, the solution count also follows.
        \item It is known that the Pell equation for $h_j$ (both over $\C(j)[X]$ and over $\C[X]$) is solvable if and only if $P_j$ is torsion (as a section or as a point for fixed $j$, respectively), see for instance \cite[\S 1.2]{masserzannier}. Therefore, by Proposition \ref{torsj}(i) and the Silverman specialization Theorem \cite[Theorem 20.3]{silv}, for each number field $L\supset K$ the Pell equation for $h_j$ is solvable over $\C[X]$ only for finitely many $j\in L$. 
        \vspace{2mm}
    
        \noindent
        On the other hand, applying the result \cite[Theorem 1.1(b)]{masserzannier} of Masser and Zannier with $D=h_j, \ P_D=P_j, \ E_D=F_j, \ n=1$ gives the second part of the statement, provided we show that $F_j$, as an elliptic curve over $K(j),$ is not isogenous to a fixed constant curve $E_0$. If this was the case, it would follow that, with at most finitely many exceptions, all geometric fibers $F_j$ would be mutually isogenous over $\C$---the degree of the isogenies being fixed and equal to that of the isogeny $F_j\sim E_0$. Therefore, their $j$-invariant could assume only finitely many values, contradicting Lemma \ref{prymdata}.
    \end{enumerate}
\end{proof}
\begin{reml}
    For $j=0$ the situation in regards to Theorem \ref{t2}(ii) is mixed: we can get both finitely and infinitely many solutions to \eqref{varsep} over $\Q(i)$, each case for infinitely many quartics. Indeed, the family $E'_{0,B}$ is that of cubic twists of the Mordell curve $E'_{0,1}:y^2=x^3-1$; it is known that infinitely many such twists have $\Q$-rank $1$ \cite[Example 11.3]{krizli}. The same example also gives infinitely many prime cubic twists with $\Q$-rank $0$, while \cite[Corollary 7.7(1)]{cohenpaz} does the same for the $(-1)$-quadratic twist family: infinitude of ranks $0$ over $\Q(i)$ then follows by the Chebotarev density theorem.
\end{reml}
\begin{proof}[Proof of Theorem \ref{varsepbad}]
    The statement for good quartics follows directly from Theorem \ref{t2}(ii) and Remark \ref{badjs}, so we just need to show that \eqref{varsep} has infinitely many solutions over $\Q(i)$ for any bad rational quartic $f$. Remark \ref{bad quartics}(i) tells us what these quartics are those with $j(Z_f)\in\{1728,\infty\}$, so $F_f$ is singular by Lemma \ref{prymdata}. Therefore, the normalization of \eqref{varsep} has genus $0$, and it is enough to show that there is one smooth rational solution. By Remark \ref{bad quartics}(ii), if $j(Z_f)=\infty$ then $f$ is linearly equivalent (over $\Q$) to either $X^4$ or $X^4+X^3$, in which case $(-1,1)$ and $(-1,0)$ are, respectively, smooth solutions. If $j(Z_f)=1728$, then $f$ has the form $X^4+dX^2$ for some some squarefree $d\in\Q^\times$, and again $(-1,1)$ is a smooth solution unless $d=-2$, in which case we can instead take $(i,0)$.
\end{proof}

\subsection{A model for the Galois closure}

Our cyclic unramified $3$-cover $g_f: X_f\to \c_f$ over $K$ corresponds to the $K$-rational $3$-cyclic subgroup $\ker\widehat\psi_f$ (cf$.$ Proposition \ref{clos}(ii)) embedded in $\jac(\c_f)$ via $\phi_f^*:E_f\hookrightarrow\jac(\c_f)$. Recall from Remark \ref{3pt} that the twist $E_f^{(-3)}$ of $E_f$ by $-3$ has a $3$-torsion point: the base-change $g_f^{(-3)}:X_f\to\c_f^{(-3)}$ of $g_f$ to $K(\sqrt{-3})$ becomes a Kummer cover. 
As Bruin and Flynn remark in \cite[(1)]{BF}, 
curves of genus $g=n-1$ admitting a Kummer $n$-cover (necessarily of genus $g^2$, by the Riemann-Hurwitz formula) over $K$ have a model \begin{align}
    Y^2=G(X)^2+kH(X)^n\label{bruin-flyyn model}
\end{align} with $k\in \mathcal{O}_K^\times, \ G,H\in\mathcal{O}_K^\times[X]$ and $\deg G=n,\ \deg H=2$. 
The cover is then obtained, up to $n$-th twist, by taking an $n$-th root of $Y+G(X)$: in their notation, $$D_{\delta}:\delta U^n=Y+G(X), \quad \delta\in K^\times/(K^\times)^3.$$
Observe that, in our (twisted) case, the curve which Bruin and Flynn denote by $\mathcal{F}$---the quotient of $D_{\delta}$ by any lift of the hyperelliptic involution, when $n-1$ is even---is just $(E'_f)^{(-3)}=F_f^{(3)}$. In \cite[Lemma 2]{BF}, they show that $\jac(D_{\delta})$ splits as $\jac(\c)\times\jac(\mathcal{F})^2$, in accordance with what we already found, and that $D_{\delta}\to\mathcal{F}$ has minimal field of definition $K((-k/\delta^2)^{\frac13})$; since, in our case, $X_f\to (E'_f)^{(-3)}$ is defined over $K(\sqrt{-3}),$ we obtain that the cubic class of $\delta$ is that of $k^{-1}$. 

Given our quartic $f(X)=X^4+2AX^2+4BX$, a direct computation of the model \eqref{bruin-flyyn model} for $\c_f^{(-3)}$ (which follows, for instance, from the formula
$Y^2=\operatorname{Disc}_{X'}\left(\frac{f(X')-f(X)}{X'-X}\right)$ derived in \cite[(14)]{GLNZ} for an affine model of $\c_f$) gives: $$k=-4, \quad G(X)=4(5X^3+9AX+27B), \quad H(X)=-2(X^2+3A).$$
So we find $\delta=2$, and a model for $X_f$ is given by $$U^3G(X)=U^6+H(X)^3,$$with the map $$g_f:X_f\to\c_f, \quad (X,U)\mapsto(X,-\sqrt{-3}U(U^2+2X^3+6A))$$one can now infer from \cite[Lemma 1]{BF}.

\subsection{Jacobians isogenous to the power of an elliptic curve}

The study of decompositions of genus $g$ Jacobians goes back to Serre and Ekedahl \cite{SerEk}, with many open questions still actively researched. Paulhus has studied the problem of determining for which genera $g$ there is a genus-$g$ Jacobian $J=\jac(C)$ with \begin{align}
    J\sim E^g\label{gthpow}
\end{align}for an elliptic curve $E$; in \cite[Theorem 7]{Pau}, she gives one such example for each $g\in\{3,4,5,6\}$, with $C/\Q$ and the isogeny being defined over the field of definition of $\Aut_{\overline\Q}(C)$. Using the results in \cite{swinarski}, one should be able to show that for her specific genus-$4$ example (the curve with automorphism group $(72,40)$), the isogeny is actually defined over $\Q$, and to also recover the isogeny class of $E$. The only instance in the literature of a genus-$4$ curve whose Jacobian splits over $\Q$ as a fourth power of an \textit{explicit} elliptic curve, seems to be \textit{Bring's curve.} Its Jacobian is isogenous to $(50a)^4$, as observed by Serre \cite[\S 8]{serTopGal} \cite{braden-dinsey-hogg}. 
Looking at the splitting \eqref{g4decompose} of the Jacobian of our Galois closure, it would therefore be of some interest to determine for which $j\in\Q$ we have \begin{align}E_j\sim E'_j,\label{factoriso}\end{align} as this would yield other examples of \eqref{gthpow} for $g=4$. Moreover, if the isogeny \eqref{factoriso} is defined over $\Q$ then so is $\jac(\c_j)\sim E_j^4$, in virtue of \eqref{g4decompose}.

\begin{reml}
Denoting by $\mathcal A_g$ the moduli space of principally polarized abelian varieties of dimension $g$ and by $\mathcal T_g\subset \mathcal A_g$ the Torelli locus of Jacobians of smooth curves of genus $g$, we have
$$\dim \mathcal A_g=\frac{g(g+1)}2, \qquad \dim \mathcal T_g=3g-3.$$As the representative $E$ for $\overline{\Q}$-isomorphism classes of elliptic curves varies, the abelian varieties $E^g$ form a one-dimensional subvariety of $\mathcal A_g$. Applying any fixed isogeny correspondence to this family gives another one-dimensional subvariety. Any such translate has expected dimension of intersection $1+\dim \mathcal A_g-\dim \mathcal T_g=-\frac{(g-1)(g-4)}2$ with
$\mathcal T_g$. So, $g=4$ is at the boundary of the unlikely intersection regime, and one expects such a one-dimensional family to meet the Torelli locus in isolated points. Still, there are infinitely many translates, so it is unclear whether we should expect finiteness of such genus-$4$ Jacobians over $\Q$. See \cite{ChenLuZuo} for a discussion of the unlikely intersection case of this problem.
\end{reml}

Observe how a first candidate is the subfamily $f(X)=X^4+4BX$ with $j=0$, for which one has $E_f=E_{0,B}:y^2=x^3-27B^4$ and $E'_f=E_{0,B}':y^2=x^3-B^2,$ respectively by \cite[p.18]{nac} and Proposition \ref{torsj}(ii). A simple computation shows that these two curves are never isogenous over $\Q$, and the field $F$ of smallest degree where
$E_{0,B}\sim E_{0,B}'$ happens for some $B$ 
is either $\Q(i)$ or $\Q(\sqrt3),$ over both of which there is such an isogeny for $B\in(F^\times)^3$. Clearly any such $B$ gives the same pair of isogenous curves; in other words, for $f(X)=X^4+4X$ we have
$\jac(X_f)\sim E_f^4$ over $\Q(i)$ and $\Q(\sqrt3)$, but not over $\Q$. 

The standard strategy for checking whether a genus $2$ correspondence yields isogenous elliptic curves curves over a number field $K$, is to intersect its graph in the affine plane parametrizing the $j$-invariants of the two curves with that of the modular polynomial $\Phi_N$ (for all values of $N$ for which an $N$-isogeny over $K$ can exist), see \cite[Remark 10]{kumar}. For $K=\Q$, the possible values of $N$ are $N\le19, \ N\in\{21,25,27,37,43,67,163\}$. In our case, plugging the $j$-invariants of $E_f$ and $E'_f$ as functions of $j$ (given by \eqref{jcv} and \eqref{jprym}) in $\Phi_N(X,Y)$ for the required values of $N$, yields linear factors for $N=1,3,5,7,9,15,21$. The corresponding $j$-invariants are listed in the following table (with the exception of $j=0$, where there is always such an isogeny, but as we have seen it is never defined over $\Q$):
\begin{align*}
\begin{array}{c|c|c}
N & j & \bigl(j(E_f),j(E'_f)\bigr) \\
\hline
1 & 2048
  & \left(\dfrac{16384}{5},\dfrac{16384}{5}\right) \\[0.7em]
3 & \dfrac{8000}{7}, \ \dfrac{55296}{5}
  & \left(\dfrac{9938375}{21952},-\dfrac{15625}{28}\right)\text{ and }\left(\dfrac{488095744}{125},\dfrac{16384}{5}\right) \\[0.7em]
5 & -320
  & \left(-\dfrac{121945}{32},-\dfrac{25}{2}\right) \\[0.7em]
7 & \dfrac{3375}{2}
  & \left(-\dfrac{1159088625}{2097152},-\dfrac{140625}{8}\right) \\[0.7em]
9 & -3375
  & \left(-\dfrac{548347731625}{1835008},-\dfrac{15625}{28}\right) \\[0.7em]
15 & 270
  & \left(\dfrac{46969655}{32768},-\dfrac{25}{2}\right) \\[0.7em]
21 & -72000
  & \left(-\dfrac{189613868625}{128},-\dfrac{140625}{8}\right)
\end{array}
\end{align*}
Observe how, even if we are looking at eight pairs of $j$-invariants, that of $E'_j$ only takes four values, being shared by two pairs. This is simply a consequence of the fact that $E_j$ has a $3$-isogeny, which can be composed with that to $E'_j$. The table only tells us for which values of the $j$-invariant $j$ of the critical points we have a $K$-isogeny between $E_f$ and \textit{a twist} of $E'_f$: we have to manually check if the twist is the trivial one. This turns out to happen for three of the pairs:
\begin{prop}
    For $j\in\{\frac{8000}{7}, \frac{3375}{2}, 270\}$ we have \begin{align*}
        E_j\sim E'_j.
    \end{align*}over $\Q$. 
\end{prop}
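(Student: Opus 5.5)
The plan is to reduce each of the three cases to an explicit check on concrete Weierstrass models over $\Q$. For each $j$ I will choose a rational normalized quartic $f(X)=X^4+2AX^2+4BX$ with $j(Z_f)=j$. By the discussion after Remark \ref{Q-comp}, the $\Q$-isomorphism classes of $E_f$ and $E'_f$ depend only on $j$ (for $j\neq0,1728$), so any such representative suffices. Concretely, $u=A^3/B^2=\frac{27j}{4(1728-j)}$ determines $A,B$ up to scaling. For $j=8000/7$ we get $u=\frac{27\cdot 8000}{4\cdot 4096}\cdot$ (a simple rational), and similarly for the other two values. I will take $A=u$, $B=u$ (so that $A^3/B^2=u$), or a scaled integral version of this choice. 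I then write $E'_f:y^2=x^3-Ax^2-B^2$ from Lemma \ref{prymdata} and $E_f:-y^2=\prod_{v\in C_f}(x-v)$ from Lemma \ref{crit}, and put both in short Weierstrass form to read off $c_4,c_6$.

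The key observation is that the table already gives, for each of these $j$, a cyclic $N$-isogeny over $\overline{\Q}$ (with $N=3,7,15$ respectively) between $E_j$ and some twist of $E'_j$. Because $j(E'_j)\notin\{0,1728\}$ in all three cases ($-15625/28$, $-140625/8$, $-25/2$), the only twists are quadratic. So there is a unique $d\in\Q^\times/(\Q^\times)^2$ such that $E_j\sim_{\Q}(E'_j)^{(d)}$, and the task is to show $d=1$. To identify $d$, I would compute the $N$-isogenous curve $E''$ of $E_j$ over $\Q$ via Vélu's formulas. Its kernel is the rational cyclic subgroup cut out by the rational root of $\Phi_N(j(E_j),Y)$ at $Y=j(E'_j)$, whose rationality is guaranteed by the linear factor found in the table. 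I would then compare the invariants of $E''$ with those of $E'_j$: since $j(E'')=j(E'_j)$, we have $d\equiv \frac{c_6(E'')c_4(E'_j)}{c_4(E'')c_6(E'_j)}$ modulo squares, and I need to check that this quantity is a square.

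As an independent and simpler cross-check, I would compare traces of Frobenius $a_p(E_j)$ and $a_p(E'_j)$ at a handful of good primes. By Faltings, equality of all $a_p$ is equivalent to $\Q$-isogeny. In practice, agreement at small primes together with the existence of the geometric isogeny rules out every nontrivial quadratic twist: a twist by $d\neq1$ flips $a_p$ at every prime inert in $\Q(\sqrt d)$, and some such small good prime with $a_p\neq0$ will show the difference. Equivalently, I can look up both curves on the LMFDB and confirm that they lie in the same isogeny class, in the same way the paper identifies curves by Cremona label elsewhere.

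The main obstacle is the bookkeeping of the twist. Computing the $N$-isogenous curve for $N=15$ directly is heavy; it is better handled by composing a $3$-isogeny, which is available rationally because $E_j$ has one by Theorem \ref{t1}, with a $5$-isogeny. For the same reason, the $j=8000/7$ case uses the $3$-isogeny $\psi$ directly. If the explicit Vélu route becomes unwieldy, I would fall back on the trace-of-Frobenius criterion, which is conclusive once the geometric isogeny is known.
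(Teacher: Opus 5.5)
Your plan is in substance the paper's. The paper's proof is purely computational: it builds models of $E_j$ and $E'_j$ with the scripts of \cite{github}, identifies them as the Cremona pairs (126a1, 126a3), (162c2, 162c4), (50a1, 50a4), and reads off from the LMFDB that these are linked by a $3$-, $7$- and $15$-isogeny respectively. Your LMFDB fallback is exactly this. The V\'elu/twist-parameter computation is a legitimate self-contained certificate of the same fact. Reducing to a unique twist class $d$ via the rational root of $\Phi_N(j(E_j),Y)$ is sound: the root is simple and the kernel is rational because $E_j$ has non-integral $j$-invariant, hence no CM.

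One claim in your last paragraph is false, however. For $j=8000/7$, the $3$-isogeny coming from Theorem~\ref{t1} does not reach a twist of $E'_j$. Here $t=j/64-27=-64/7$ and $j(E_j)=\pi_3(t)$. The curve at the other end of the moduli $3$-isogeny $\hat\psi_f$ therefore has $j$-invariant $\pi_3(729/t)=(t+27)(t+243)^3/t^3=-548347731625/1835008$. That is $j(E_{-3375})$ from the $N=9$ row of the table, not $j(E'_j)=-15625/28$. So $E_{8000/7}$ has two rational $3$-isogenies, and the one to $E'_j$ is the \emph{other} one; this is also why the $N=3$ and $N=9$ rows share the value of $j(E'_j)$. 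V\'elu applied to $\ker\hat\psi_f$ would give the wrong curve. You must use the subgroup corresponding to the root $Y=-15625/28$ of $\Phi_3(j(E_j),Y)$, as your main description correctly says.

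For $j=270$ your suggestion is right. There $t=-729/32$ gives $\pi_3(729/t)=\pi_3(-32)=-121945/32=j(E_{-320})$, so the $15$-isogeny is $\hat\psi_f$ followed by (a twist of) the $5$-isogeny of the $N=5$ row.

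The trace-of-Frobenius check needs one more sentence to be conclusive, because a priori $d$ ranges over infinitely many classes. Since $E_j$ and $(E'_j)^{(d)}$ must have the same conductor, $d$ is supported on $-1$, $2$ and the bad primes of $E_j$ and $E'_j$. Only finitely many twists then need ruling out, each by a single good prime $p$ inert in $\Q(\sqrt d)$ with $a_p\neq0$.

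A minor correction: at $j=8000/7$ one has exactly $u=3375/256$, with no extra factor.
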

\begin{proof}
    Computing models for the curves via the Sage and MAGMA implementations in \cite{github}, we find that the three pairs for $j$ as above are, respectively, those with Cremona labels (126a1, 126a3), (162c2, 162c4) and (50a1, 50a4). The LMFDB confirms that first pair is related by a $3$-isogeny, the second by a $7$-isogeny and the third by a $15$-isogeny.
\end{proof}
Our third pair recovers Bring's curve \cite{braden-dinsey-hogg}, while the other two are new. Using the LMFDB and the implementations in \cite{github} to find models for $\e_f$ and $E_f$, we obtain Theorem \ref{fourthsplit}.

\newpage
\printbibliography
\end{document}